\renewcommand\Re{\mathop{{\rm Re}}}
\renewcommand\Im{\mathop{{\rm Im}}}
\newcommand{\HT}{\mathcal{H}}
\newcommand{\Log}{\mathcal{L}}
\newcommand{\ds}{\displaystyle}
\newcommand{\pv}{\mathop{\mathrm{p.v.}}}
\newcommand{\sgn}{\mathop{\mathrm{sgn}}}
\newcommand{\st}{\,:\,}
\newcommand{\sumd}{\sideset{}{'}\sum}
\newtheorem{thm}{Theorem}[section]
\newtheorem*{thm*}{Theorem}
\newtheorem{cor}[thm]{Corollary}
\newtheorem*{cor*}{Corollary}
\newtheorem{lem}[thm]{Lemma}
\newtheorem{prop}[thm]{Proposition}
\newtheorem*{con*}{Conjecture}
\newtheorem*{prob*}{Problem}
\theoremstyle{definition}
\newtheorem{defn}[thm]{Definition}
\theoremstyle{remark}
\newtheorem{rem}[thm]{Remark}
\numberwithin{equation}{section}
\begin{document}

\title{Statistical mechanics of the periodic Benjamin--Ono equation}
\author{Gordon Blower}
\email{g.blower@lancaster.ac.uk}
\address{Department of Mathematics and Statistics, Lancaster
University, Lancaster LA1 4YF, United Kingdom}

\author{Caroline Brett}
\email{caroline.brett@gssi.it}
\address{Gran Sasso Science Institute, Viale Francesco Crispi n.7, 67100 L'Aquila AQ, Italy}

\author[Ian Doust]{Ian Doust}
\email{i.doust@unsw.edu.au}
\address{School of Mathematics and Statistics, University of New South Wales, Sydney, NSW 2052, Australia}

\subjclass[2010]{}

\keywords{Hamiltonian systems, Gibbs measures, statistical mechanics of PDE}
\date{1st February 2019}

\begin{abstract} The periodic Benjamin--Ono equation is an autonomous Hamiltonian system with a Gibbs measure on $L^2({\mathbb T})$. The paper shows that the Gibbs measures on bounded balls of $L^2$ satisfy some logarithmic Sobolev inequalities. The space of $n$-soliton solutions of the periodic Benjamin--Ono equation, as discovered by Case, is a Hamiltonian system with an invariant Gibbs measure. As $n\rightarrow\infty$, these Gibbs measures exhibit a concentration of measure phenomenon. Case introduced soliton solutions that are parameterised by atomic measures in the complex plane. The limiting distributions of these measures gives the density of a compressible gas that satisfies the isentropic Euler equations.
\end{abstract}

\maketitle

\section{Introduction}\label{S:Intro}

The Benjamin--Ono equation is an integro-differential equation which was originally introduced in the study of waves in deep water \cite{Ben1967}. Conceptually, the Benjamin--Ono equation lies between the Burgers equation and the KdV equation, hence shares some of the properties of the latter differential equation. More recently, various authors have studied the periodic version of the equation, proving for example, theorems on global well-posedness (see, for example, \cite{Mol2007}).

The Benjamin--Ono equation exhibits technical challenges which are not present in the case of the NLS and KdV equations in one space dimension. Specifically, the presence of the Hilbert transform means that solutions can be influenced by behaviour which is simultaneous but at a large distance.

%
Let ${\mathbb T}={\mathbb R}/2\pi {\mathbb Z}$ be the unit circle. The Hilbert transform on ${\mathbb T}$ is the operator $\HT : L^2({\mathbb T}; {\mathbb C})\rightarrow L^2({\mathbb T}; {\mathbb C})$
  \[ \HT v(x)= \pv \int_{\mathbb T} \cot \left({\frac{x-y}{2}}\right) v(y)\, \frac{dy}{2\pi } \qquad (x \in {\mathbb T}). \]
The Hilbert transform may be expressed as the Fourier multiplier $\HT :e^{inx}\mapsto -i \sgn(n) e^{inx}$, where we take $\sgn(0) = 0$.

The periodic Benjamin--Ono equation with real parameter $\beta$ is the evolution equation,
\begin{equation}\label{BO-eqn}
  \frac{\partial u}{\partial t} +\HT \frac{\partial ^2u}{\partial x^2} +2\beta \frac{\partial u}{\partial x} u=0
\end{equation}
where $u = u(x,t):{\mathbb T}\times {\mathbb R}\rightarrow {\mathbb R}$ is twice continuously differentiable, and $\mathcal{H}$ is acting on the space variable $x$.

The Hamiltonian
  \begin{equation}\label{Hamilt}
   H_\beta (u)=\frac{1}{2} \int_{\mathbb T} \HT \frac{\partial u}{\partial x} (x,t) \, u(x,t)\, \frac{dx}{2\pi}
        +\frac{\beta}{3} \int_{\mathbb T} u(x,t)^3\, \frac{dx}{2\pi},
  \end{equation}
has canonical equation of motion
  \begin{equation}\label{CEM}
   \frac{\partial u}{\partial t} = \frac{\partial }{\partial x} \frac{\partial H_\beta}{\partial u}
  \end{equation}
which gives rise to (\ref{BO-eqn}). Under the evolution (\ref{BO-eqn}), $H_\beta (u)$ is invariant with respect to time $t$, as is
   \begin{equation}\label{N-number}
   N(u)=\int_{\mathbb T} u(x,t)^2 \, \frac{dx}{2\pi}.
   \end{equation}

The periodic Benjamin-Ono equation is this autonomous Hamiltonian system, which can be viewed as the limit of a sequence of
autonomous Hamiltonian systems that have phase spaces modelled on the finite-dimensional vector spaces that are spanned by the first $M$ modes of the trigonometric basis of $L^2({\mathbb T})$. For each such system, the canonical equations of motion give a system of ordinary differential equations that has a Gibbs measure which is invariant by the classical Liouville theorem.  Hence it is natural to regard the limit of these finite dimensional Gibbs measures as the Gibbs measure for the Benjamin--Ono system itself.  Deng, Tzvetkov and Visciglia \cite{DTV2015} constructed such an invariant measure for (\ref{BO-eqn}), which is absolutely continuous with respect to the  free measure for $\beta=0$, and for which the  initial value problem is well-posed on the support of the measure. Furthermore, the Gibbs measure is not absolutely continuous with respect to the usual Wiener loop, and does not live on $L^2$ itself.

Lebowitz, Rose and Speer \cite{LRS1988} introduced invariant Gibbs measures for the nonlinear cubic Schr\"odinger equation, and proved that they can be normalised on bounded subsets of $L^2$ of the form
 \begin{equation}\label{Omega_N}
   \Omega_N=\left\{ f\in L^2({\mathbb T}; {\mathbb R}) \st \int_{\mathbb T} f(x)^2 \,\frac{dx}{2\pi} \leq N\right\}.
  \end{equation}
These measures determine the modified canonical ensemble. The fundamental idea is that solutions drawn from the support of the Gibbs measure should exhibit typical behaviour of solutions, which may not be exhibited by smooth or specially chosen solutions.

  Bourgain \cite{Bour1994} introduced Gibbs measures for the periodic KdV equation via random Fourier series, and the current paper follows this method. Lebowitz, Rose and Speer \cite{LRS1988} identified two different regimes.
\begin{enumerate}[(i)]
 \item For sufficiently small $N$, the Gibbs measure is absolutely continuous with respect to the free measure, and there is a well-posed dynamical system when the initial data lies in the support of the Gibbs measure, and the measure is invariant under the flow associated with the dynamical system.
 \item For sufficiently large $N$, the Gibbs measure tends to concentrate on a travelling wave solution, which is given by a soliton.
\end{enumerate}
Likewise, for the Benjamin--Ono equation, there are two regimes for periodic solutions.
\begin{enumerate}[(i)]
 \item  In Section~\ref{S:Gibbs}, we work in Fourier space and obtain the Benjamin--Ono equation from a Hamiltonian system with canonical
 coordinates given by the Fourier coefficients.
 Hence the properties of  the Gibbs measure are accessible by the techniques of random Fourier series. We prove that the Gibbs measure satisfies a logarithmic Sobolev inequality, and discuss further consequences of this such as transportation cost inequalities.
 \item  In Section~\ref{S:Travelling}, we introduce multi-soliton periodic travelling wave solutions, which are governed by a Hamiltonian in canonical coordinates in position and velocity space, rather than Fourier space. A further difference is that the multi-soliton is specified by a probability measure on the circle. In Sections~\ref{S:RandomM} and \ref{S:ContHS}, we analyse the $n$-solitons as $n\rightarrow\infty$, and interpret their limiting behaviour.
\end{enumerate}

\section{Definitions and notation}\label{S:Defn}

Throughout we shall identity a function $f \in L^2(\mathbb{T},\mathbb{R})$ with its Fourier coefficients,
  \[ f(x)=\frac{a_0}{2}+\sum_{j=1}^\infty (a_j\cos jx +b_j\sin jx) \]
giving an isometric isomorphism between $L^2({\mathbb T}, {\mathbb R})$ and ${\mathbb R}\oplus \ell^2 ({\mathbb N}; {\mathbb R}^{2\times 1})$.

With this convention we have the identifications
    \begin{equation}
    {\frac{\partial}{\partial x}} \leftrightarrow 0\oplus \bigoplus_{j=1}^\infty
       \begin{bmatrix}
           0&j\\
          -j&0
       \end{bmatrix},
    \qquad
    {\mathcal H}  \leftrightarrow 0\oplus \bigoplus_{j=1}^\infty
         \begin{bmatrix}
             0&1\\
            -1&0
         \end{bmatrix},
    \end{equation}
where these matrices are skew-symmetric and commute. We have the Poisson bracket,
 \[  \{ f,g\}=\sum_j j \left(-\frac{\partial f}{\partial b_j}\frac{\partial g}{\partial a_j} +
          \frac{\partial f}{\partial a_j} \frac{\partial g}{\partial b_j} \right)\]
for infinitely differentiable functions $f,g:{\ell}^{2}\rightarrow {\mathbb R}$ that depend on only finitely many coordinates. Hence the canonical equations of motion are
 \begin{equation}\label{canonical}\dot a_n =\{ a_n, H\}=n \frac{\partial H}{\partial b_n}, \qquad \dot b_n =\{ b_n, H\} =-n\frac{\partial H}{\partial a_n}. \end{equation}
In particular, with
  \begin{equation}
      H=\sum_{n=1}^M n(a_n^2+b_n^2)- \frac{\beta}{3}
          \int_{\mathbb T} \left( \sum_{n=1}^M a_n\cos n x  +b_n\sin n x  \right)^3\,  \frac{d x }{2\pi},
  \end{equation}
we obtain a finite-dimensional version of the Benjamin--Ono equation.
This is consistent with the Poisson bracket used for periodic KdV and similar evolution equations \cite{Bour1994}.

In the following definition and later, $\sumd$ denotes a sum where the term for index 0 is omitted.

\begin{defn}
For $\eta \in {\mathbb R}$, let
  \begin{equation}\label{H-eta}
   \dot H ^\eta = \left\{ f(x)=\sumd_{n=-\infty}^\infty c_n e^{inx} \st \text{$c_n\in {\mathbb C}$ and $\ds \sumd_{n=-\infty}^\infty \vert n\vert^{2\eta}\vert c_n\vert^2<\infty$}  \right\}
   \end{equation}
with $\Vert f\Vert_{H^\eta}= \Bigl( \sumd_{n} \vert n\vert^{2\eta }\vert c_n\vert^2 \Bigr)^{1/2}$, and let $H^\eta ={\mathbb C}\oplus \dot H^\eta$.
\end{defn}

Note that the canonical inclusion map $H^{1/2}\rightarrow H^{-1/2}$ is Hilbert--Schmidt, and $H^{-1/2}$ is the dual space of $H^{1/2}$ for the pairing $\langle f,g\rangle = \ds \int_{\mathbb T} f(x)\bar g(x)\, dx/(2\pi )$.

We can now define the Gibbs measures which will be analysed in Section~\ref{S:Gibbs}.

\begin{defn} (Gibbs measure)
Suppose that $\beta \in \mathbb{R}$, $N > 0$ and $M \in \mathbb{N}$.
Identifying a function $f \in L^2(\mathbb{T},\mathbb{R})$ with its Fourier coefficients as above, the probability measure $\nu_{\beta, N,M}$ on $L^2(\mathbb{T},\mathbb{R})$, with parameters $\beta$, $N$ and $M$, is defined by
\begin{multline}\label{inv-meas}
  \nu_{\beta, N,M} (df)  \\
  = \frac{1}{Z_{\beta, N,M}} {\bf I}_{\Omega_N}(f)
       \exp \left( -\beta \int_{\mathbb T} \left( \sum_{j=1}^M a_j \cos j x+b_j\sin j x\right)^3 \frac{dx}{2\pi }\right)
            \prod_{j=1}^M e^{-j(a_j^2+b_j^2)/2} \frac{da_jdb_j}{2\pi j},
  \end{multline}
where ${\bf I}_{\Omega_N}$ denotes the indicator function of $\Omega_N$, and $Z_{\beta, N,M}>0$ is the appropriate normalising constant.  These measures are called Gibbs measure for the modified canonical ensemble for the periodic Benjamin--Ono equation.
\end{defn}

We shall denote by ${\mathcal P}(\Omega)$ the set of Radon probability measures on a complete separable metric space $(\Omega,d)$, equipped with the weak topology.

In Sections \ref{S:Travelling} and \ref{S:RandomM} we discuss another Hamiltonian system associated with the soliton solutions of the Benjamin--Ono equation,
\begin{equation}\label{KHam} K_n(p,q) ={\frac{1}{2}}\sum_{j=1}^n p_j^2 +{\frac{k^2}{2}}\sum_{m,\ell=1: m\neq \ell}^n {\hbox{cosec}}^2 \left({\frac{k(q_m-q_\ell)}{2}}\right).\end{equation}
 With $\phi (x)=-ik\cot (kx/2)$, a particular collection of solutions of the canonical equations of motion is given by
   \begin{equation}\label{q'}
   {\frac{dq_\ell }{dt}}
   = -\phi (q_\ell -\bar q_\ell )-\sum_{m:m\neq \ell} \phi (q_m-q_\ell )
           -\sum_{m:m\neq \ell} \phi (q_\ell -\bar q_m)\qquad (\ell =1, \dots, n),
   \end{equation}
and the initial condition $(q_j(0))_{j=1}^n$ gives a discrete  $\omega_n=(1/n)\sum_{j=1}^n \delta_{e^{iq_j}}\in {\mathcal P}({\mathbb D})$. Then there exists a solution $u_n(x,t)$ of the Benjamin--Ono equation such that the $\omega_n$ determines the initial profile $u_n(x,0)$, and the pole
dynamics of $u_n$ is determined by the ODE (\ref{q'}). The poles $e^{iq_j}$ are known as vortices.

The system (\ref{q'}) can itself be described as a dynamical system with Hamiltonian $E_{n,v}$, which amounts to the electrostatic energy of a collection of points in the complex plane. This $E_{n,v}$ has a Gibbs measure on phase space, which is the space of initial conditions of the dynamics of $\omega_n$.  In Section \ref{S:RandomM} we show how  $E_{n,v}$ arises from a  random matrix model. Such models are often called log gas models due to the formula for the electrostatic energy. Using the techniques of random matrix theory, we obtain concentration of measure results for these Gibbs measures as $n\rightarrow\infty$.

In Section \ref{S:ContHS}, we suppose that $\rho_n\rightarrow \rho$ weakly in ${\mathcal P}({\mathbb T})$, where the density $\rho$ satisfies $Q'={\mathcal H}\rho $ for some $Q\in C^2({\mathbb T}; {\bf R})$, which is known as the scalar potential. As $n\rightarrow\infty$, the sequence of Hamiltonians $\{K_n\}$ converges to the Hamiltonian $K$, where
 \[ K(q, \rho )
     = {\frac{1}{2}}\int_{\mathbb T} \rho (x)\left( {\frac{\partial q}{\partial x}}\right)^2\, dx
        +{\frac{2\pi^2}{3}}\int_{\mathbb T} \rho (x)^3\, dx. \]
This $K$ gives an autonomous Hamiltonian system on an infinite-dimensional phase space $L^2({\mathbb T})\times L^2({\mathbb T})$  and canonical variables $(\rho, q)$, for functions $q, \rho: {\mathbb T}\rightarrow {\mathbb R}$.  This gives rise to the system of partial differential equations
   \begin{align}
     &{\frac{\partial \rho}{\partial t}}+{\frac{\partial}{\partial x}}\left( \rho {\frac{\partial q}{\partial x}}\right)
      = 0, \label{conservation} \\
     &{\frac{\partial q}{\partial t}}+{\frac{1}{2}}\left( {\frac{\partial q}{\partial x}}\right)^2+2\pi^2\rho^2
       =0, \label{limitpole}
   \end{align}
which are a version of the isentropic Euler equations for a compressible gas. Here $\rho$ (assumed to lie in $L^3(\mathbb{T})\cap {\mathcal P}(\mathbb{T})$) represents the gas density, and
 equation (\ref{conservation}) describes the conservation of mass; let $L^2(\rho )=\{ g:{\mathbb T}\rightarrow {\mathbb C}: \int \vert g (e^{i\theta})\vert^2\rho (e^{i\theta})\, d\theta<\infty\}$. With $q$ regarded as a phase variable,
 $u={\frac{\partial q}{\partial x}}$ is the velocity with $u\in L^2(\rho)$. The equation (\ref{limitpole}) is a modified version of the Hamilton--Jacobi equation in which we regard $q$ as a phase function. One can therefore regard ${\mathcal P}_2({\mathbb T})$ as a manifold with a tangent space at $\rho_0$ which is modelled on $H^{-1/2}$. See   \cite[Theorem~7.26]{V2003} for a related result. The significance of the pairing $(H^{1/2}, H^{-1/2})$ is its universality, depending only on the general properties of $\rho_0$.

\section{Concentration of the Gibbs measure}\label{S:Gibbs}

In this section, we prove a logarithmic Sobolev inequality for the Gibbs measure $\nu_{\beta, N,M}$. We suppress the $t$ variable, and consider the Hamiltonian $H_\beta (u)$ defined for $u=u(x)$.  The operators ${\mathcal H}$ and $\vert D\vert$ are not local in the sense of \cite{BakEm}, so $\vert D\vert$ does not give rise to a \textsl{carr\'e du champ} operator in the style of Bakry and {\'E}mery. We circumvent this problem by using the Fourier coefficients as the primary variables. The following lemma enables us to interpret the spatially periodic Benjamin--Ono equation in terms of Fourier series for $u$ in the $x$ variable and $u$ in the domain of $\vert D\vert^{1/2}$, and we work on $\Omega_N\cap H^{1/2}$ with coordinates in Fourier transform space. We identify $H^{1/2}$ with a Dirichlet space of harmonic functions.
For $0<r<1$, let
  \[ P_r(\theta )= \frac{1-r^2}{1-2r\cos\theta +r^2}
     = \sum_{n=-\infty}^\infty r^{\vert n\vert} e^{in\theta}
  \]
be the usual Poisson kernel.

\begin{lem}
\begin{enumerate}[(i)]
\item
The Poisson semigroup  $(P_{e^{-t}})_{t\geq 0}$ on $L^2({\mathbb T})$ has generator $\vert D\vert^{1/2}$ with domain $H^{1/2}$.
\item Every $u\in H^{1/2}$ may be identified with a real harmonic function $U:{\mathbb D}\rightarrow{\mathbb R}$ such that $U(re^{i\theta})=P_ru(\theta )$ and 
    \begin{equation}
    \Vert u\Vert_{H^{1/2}}^2
       = \vert U(0)\vert^2
            +{\frac{1}{\pi}}\int\!\!\!\int_{\mathbb D} \Vert\nabla U (re^{i\theta })\Vert^2\,r \,dr d\theta.
    \end{equation}
\item For all $u\in H^{1/2}$, the function $e^{u}$ is integrable with
   \begin{equation}
   \log \int _{\mathbb T} e^{u (\theta)}{\frac{d\theta}{2\pi}}
      \leq {\frac{1}{4\pi}}\int\!\!\!\int_{\mathbb D} \Vert \nabla U (re^{i\theta })\Vert^2\,r \, dr d\theta
                 +\int _{\mathbb T}u (\theta) \, {\frac{d\theta}{2\pi}}.
   \end{equation}
\item Let $\varphi : {\mathbb D}\rightarrow {\mathbb D}$ be a one-to-one analytic function such that $\varphi (0)=0$. Let $ u\circ \varphi\in H^{1/2}$ denote the boundary values of the harmonic function $U(\varphi (re^{i\theta}))$. Then the Hamiltonian $H_\beta$ has the property that  $u\mapsto H_\beta(u\circ\varphi )$ is continuous on $\Omega_N\cap H^{1/2}$ for the metric of $H^{1/2}$.
\end{enumerate}
\end{lem}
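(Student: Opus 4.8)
The plan is to treat (i)--(iii) as classical harmonic analysis on the disc and to concentrate effort on (iv). For (i): since $P_r=\sum_n r^{\vert n\vert}e^{in\theta}$, the operator $P_{e^{-t}}$ is the Fourier multiplier $e^{in\theta}\mapsto e^{-\vert n\vert t}e^{in\theta}$, so $(P_{e^{-t}})_{t\ge 0}$ is a strongly continuous contraction semigroup on $L^2(\mathbb{T})$ whose generator is (minus) the self-adjoint multiplier $\vert D\vert\colon e^{in\theta}\mapsto\vert n\vert e^{in\theta}$, and whose energy space --- the domain of $\vert D\vert^{1/2}$ --- is $\{u\colon\sumd_n\vert n\vert\,\vert\hat u(n)\vert^{2}<\infty\}=H^{1/2}$ by the description of $H^{\eta}$ in the Definition. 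For (ii): put $U(re^{i\theta})=P_ru(\theta)=\sum_n\hat u(n)r^{\vert n\vert}e^{in\theta}$; each summand equals $z^{n}$ or $\bar z^{\vert n\vert}$ and hence is harmonic, and $U$ is real when $u$ is. Writing $U=\Re f$ with $f(z)=\hat u(0)+2\sum_{n\ge1}\hat u(n)z^{n}$ gives $\Vert\nabla U\Vert^{2}=\vert f'\vert^{2}$, after which a direct Parseval computation in polar coordinates (over each circle $\vert z\vert=r$, then in $r$) yields the stated identity; conversely a finite Dirichlet integral forces $u\in H^{1/2}$, so $H^{1/2}$ is the Dirichlet space of harmonic functions on $\mathbb{D}$ together with the constants.

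For (iii): subtracting the mean reduces to $\int_{\mathbb{T}}u\,d\theta=0$, in which case $u=2\Re h$ on $\mathbb{T}$ with $h(z)=\sum_{n\ge1}\hat u(n)z^{n}$, $h(0)=0$, so that $g:=e^{h}$ is holomorphic on $\mathbb{D}$ with $g(0)=1$ and $\vert g(re^{i\theta})\vert^{2}=e^{U(re^{i\theta})}$. Comparing Taylor coefficients in $g'=g\,h'$, with a standard Cauchy--Schwarz induction, yields the second Lebedev--Milin inequality $\sum_{k\ge0}\vert\hat g(k)\vert^{2}\le\exp\bigl(\sum_{n\ge1}n\,\vert\hat u(n)\vert^{2}\bigr)$ (equivalent to the Moser--Trudinger--Onofri inequality on $\mathbb{D}$). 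Since $U(re^{i\theta})\to u(\theta)$ a.e. as $r\to1$, Fatou's lemma gives $\int_{\mathbb{T}}e^{u}\,\tfrac{d\theta}{2\pi}\le\liminf_{r\to1}\sum_{k}\vert\hat g(k)\vert^{2}r^{2k}\le\exp\bigl(\sum_{n\ge1}n\,\vert\hat u(n)\vert^{2}\bigr)$, and the Parseval computation of (ii) gives $\sum_{n\ge1}n\,\vert\hat u(n)\vert^{2}=\tfrac{1}{4\pi}\iint_{\mathbb{D}}\Vert\nabla U\Vert^{2}\,r\,dr\,d\theta$. Restoring the mean yields the stated inequality, and in particular $e^{u}\in L^{1}(\mathbb{T})$ for every $u\in H^{1/2}$.

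For (iv): first rewrite the Hamiltonian on the Fourier side, $\tfrac12\int_{\mathbb{T}}\HT u'\,u\,\tfrac{d\theta}{2\pi}=\tfrac12\sumd_n\vert n\vert\,\vert\hat u(n)\vert^{2}=\tfrac12\Vert u\Vert_{H^{1/2}}^{2}$, so $H_\beta(v)=\tfrac12\Vert v\Vert_{H^{1/2}}^{2}+\tfrac\beta3\int_{\mathbb{T}}v^{3}\,\tfrac{d\theta}{2\pi}$. Because $\varphi\colon\mathbb{D}\to\mathbb{D}$ is injective analytic with $\varphi(0)=0$, the function $U\circ\varphi$ is harmonic on $\mathbb{D}$, has $(U\circ\varphi)(0)=U(0)$, and by conformal invariance of the Dirichlet integral together with $\varphi(\mathbb{D})\subseteq\mathbb{D}$ and the injectivity of $\varphi$ satisfies $\iint_{\mathbb{D}}\Vert\nabla(U\circ\varphi)\Vert^{2}\,dA=\iint_{\varphi(\mathbb{D})}\Vert\nabla U\Vert^{2}\,dA\le\iint_{\mathbb{D}}\Vert\nabla U\Vert^{2}\,dA$. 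By (ii) this means $L_\varphi\colon u\mapsto u\circ\varphi$ is a norm-non-increasing linear map of $H^{1/2}$ into itself, hence Lipschitz for the metric of $H^{1/2}$, and $u\mapsto\tfrac12\Vert u\circ\varphi\Vert_{H^{1/2}}^{2}$ is continuous there. For the cubic term I would use the subcritical Sobolev embedding $H^{1/2}(\mathbb{T})\hookrightarrow L^{3}(\mathbb{T})$ (which also follows from (iii)): then $u\mapsto u\circ\varphi\mapsto\int_{\mathbb{T}}(u\circ\varphi)^{3}\,\tfrac{d\theta}{2\pi}$ is the composition of $L_\varphi$, the continuous embedding into $L^{3}$, and the locally Lipschitz cubing functional on $L^{3}$, hence continuous. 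Adding the two pieces, $u\mapsto H_\beta(u\circ\varphi)$ is continuous on all of $H^{1/2}$, a fortiori on $\Omega_N\cap H^{1/2}$.

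The step I expect to be the main obstacle is the subordination estimate in (iv): checking that $u\circ\varphi$, defined as the boundary trace of $U\circ\varphi$, really lies in $H^{1/2}$ with $\Vert u\circ\varphi\Vert_{H^{1/2}}\le\Vert u\Vert_{H^{1/2}}$. This is Littlewood's subordination principle; the clean route is the conformal change of variables above, verified first for trigonometric polynomials $u$ (where $U$ and $U\circ\varphi$ extend smoothly to $\partial\mathbb{D}$) and then for general $u\in H^{1/2}$ by density together with part (ii). The remaining normalisation constants in (ii) and (iii) are routine bookkeeping, to be settled once at the start.
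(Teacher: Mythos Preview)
Your proposal is correct and follows essentially the same route as the paper: parts (i)--(iii) are treated as classical facts (the paper simply cites the Milin--Lebedev inequality from \cite{OPS} for (iii), while you sketch the standard coefficient argument), and part (iv) hinges on the conformal invariance of the Dirichlet integral to get $\Vert u\circ\varphi\Vert_{H^{1/2}}\le\Vert u\Vert_{H^{1/2}}$.

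The one minor difference is in how the cubic term is handled in (iv). The paper invokes Littlewood's subordination principle in $L^p$ (so that $\Vert u\circ\varphi\Vert_{L^p}\le C_p\Vert u\Vert_{L^p}$), splits $H_\beta(u\circ\varphi)-H_\beta(v\circ\varphi)$ using $L^2$ and $L^4$ norms, and then passes back to $H^{1/2}$ via the embedding $H^{1/2}\hookrightarrow L^4$. You instead compose the already-established $H^{1/2}$-contraction $L_\varphi$ with the embedding $H^{1/2}\hookrightarrow L^3$ and the locally Lipschitz cubing map on $L^3$. Your route is slightly more economical, since it avoids the separate $L^p$-subordination step; the paper's route has the small advantage of making explicit that $u\circ\varphi$ is controlled in every $L^p$, not just in $H^{1/2}$. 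Either way the argument goes through, and your identification of the $H^{1/2}$-subordination estimate as the crux is exactly right.
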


\begin{proof} (i) Every $u\in L^2({\mathbb T}, {\mathbb R})$ can be extended to a real harmonic function $U(re^{i\theta })=P_ru(\theta)=\int_{\mathbb T} u(\phi )P_r(\theta -\phi )d\phi /(2\pi )$ with $U:{\mathbb D}\rightarrow {\mathbb R}$ and $P_r:e^{in\theta}\mapsto r^{\vert n\vert} e^{in\theta}$.

(ii) This follows from (i).

(iii) This is the Milin--Lebedev inequality, as in ($4'$) from \cite{OPS}.

(iv) By (iii), we have $u\in L^3$ for all $u\in H^{1/2}$, so $H_\beta (u)$ is well defined. Then $U\circ\varphi :{\mathbb D}\rightarrow {\mathbb R}$ is also harmonic and has boundary values $u\circ\varphi$, where
  \[ \Vert u\circ\varphi \Vert_{H^{1/2}}^2
      =\vert U(0)\vert^2
           +{\frac{1}{\pi}}\int\!\!\!\int_{\varphi ({\mathbb D})} \Vert\nabla U (re^{i\theta })\Vert^2\, r\, dr d\theta
   \]
and $\varphi ({\mathbb D})\subseteq {\mathbb D}$. By Littlewood's subordination principle, for $1<p<\infty $ there exists $C_p>0$ such that
   \[ \Vert  u\circ \varphi \Vert_{L^p}\leq C_p\Vert u\Vert_{L^p}\qquad (u\in L^p({\mathbb T}; {\mathbb R})).\]
We can therefore write
    \begin{align*}
  H_\beta (u\circ\varphi )-H_\beta (v\circ\varphi )
      &=\Vert u\circ \varphi\Vert^2_{H^{1/2}}-\Vert v\circ\varphi\Vert^2_{H^{1/2}} \\
      &\qquad +\beta \int_{{\mathbb T}} \bigl( (u\circ\varphi )^3-(v\circ \varphi )^3\bigr) {\frac{d\theta}{2\pi}} \\
      &\leq \Vert u\circ\varphi-v\circ\varphi\Vert_{H^{1/2}}
             \bigl(\Vert u\circ\varphi\Vert_{H^{1/2}} +\Vert v\circ\varphi\Vert_{H^{1/2}}\bigr)   \\
      &\qquad +\beta \Vert u\circ\varphi -v\circ\varphi\Vert_{L^2}
             \bigl( \Vert u\circ\varphi\Vert_{L^4}+ \Vert v\circ\varphi\Vert_{L^4}\bigr)^2,
    \end{align*}
which by subordination is bounded by
  \begin{multline*}
 \Vert u-v\Vert_{H^{1/2}}\bigl(\Vert u\Vert_{H^{1/2}}+\Vert v\Vert_{H^{1/2}}\bigr)+C_4\beta \Vert u-v\Vert_{L^2}\bigl( \Vert u\Vert_{L^4}+ \Vert v\Vert_{L^4}\bigr)^2 \\
 \leq  \Vert u-v\Vert_{H^{1/2}}\bigl(\Vert u\Vert_{H^{1/2}}+\Vert v\Vert_{H^{1/2}}\bigr)+C_4\beta \Vert u-v\Vert_{H^{1/2}}\bigl( \Vert u\Vert_{H^{1/2}}+ \Vert v\Vert_{H^{1/2}}\bigr)^2.
 \end{multline*}
\end{proof}

There are natural inclusion maps $\dot H^{1/2}\rightarrow L^2\rightarrow H^{-1/2}$, so $H^{-1/2}$ is the dual space of $\dot H^{1/2}$. Let $\Log :H^{-1/2}\rightarrow H^{1/2}$ be the map
   \begin{equation*}
   \Log f(x)=\int_{\mathbb T} \log {\frac{1}{4\sin^2 ((x-y)/2)}}\, f(y)\, {\frac{dy}{2\pi}} + {\hat f}(0),
   \end{equation*}
which may be expressed in terms of convolution with respect to
$\sum'{\frac{e^{i n \theta}}{\vert n\vert}}=-\log (4\sin^2(\theta /2))$.
Note  that there is a bounded bilinear multiplication
  \[  H^{1/2}\times H^{1/2}\rightarrow H^{1/2} :\quad (u,v)\mapsto \Log (uv),  \]
so there exists $M>0$ such that
  $\Vert \Log (uv)\Vert_{H^{1/2}} \leq M\Vert u\Vert_{H^{1/2}}\Vert v\Vert_{H^{1/2}}$
for all $u,v\in H^{1/2}$
as one checks by estimating the Fourier coefficients. We deduce that for $u\in H^{1/2}$, the term
  ${\mathcal H}{\frac{\partial u}{\partial x}}+\beta u^2$
lies in $H^{-1/2}$ and hence is a distribution in the dual of $H^{1/2}$.

\begin{prop}\label{prop2.1}
There exists $\kappa >0$ such that if $\vert \beta\vert \sqrt{N}<1/\kappa$ then the Hamiltonian $H_\beta$ defined in (\ref{Hamilt}) is uniformly convex on $\Omega_N$. Furthermore $\nu_{\beta, N,M}$ satisfies the logarithmic Sobolev inequality,
 \begin{equation}\label{log-sob1}
 \int_{\Omega_N} F(u) \log \Bigl( F(u)^2 /\int F^2 \, d\nu_{\beta, N,M} \Bigr)\, \nu_{\beta ,N,M} (du)
   \leq \frac{2}{\alpha} \int_{\Omega_N} \Vert \nabla F(u) \Vert^2_{L^2} \, \nu_{\beta ,N,M} (du)
 \end{equation}
where $\alpha =1-\kappa \vert \beta\vert \sqrt{N}$ and $\nabla F$ is the Fr{\'e}chet derivative of $F\in C^1 (\Omega_N; {\mathbb R})$.
\end{prop}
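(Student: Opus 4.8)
The plan is to realise $\nu_{\beta,N,M}$ as a uniformly log-concave probability measure on a convex body and then invoke the Bakry--{\'E}mery criterion. Let $E_M\subset L^2(\mathbb T)$ be the $2M$-dimensional space of real mean-zero trigonometric polynomials of degree at most $M$, with the inner product of $L^2(\mathbb T)$, and identify $f\in E_M$ with its coefficient vector $(a_j,b_j)_{j=1}^M$. Since $\tfrac12\sum_{j=1}^M j(a_j^2+b_j^2)=\Vert f\Vert_{H^{1/2}}^2$, formula (\ref{inv-meas}) exhibits $\nu_{\beta,N,M}$ as the measure on $E_M$ with Lebesgue density proportional to ${\bf I}_{\Omega_N}(f)\,e^{-W(f)}$, where
  \[ W(f)=\Vert f\Vert_{H^{1/2}}^2+\beta\int_{\mathbb T}f(x)^3\,\frac{dx}{2\pi} \]
(this is $2H_\beta$ up to an inessential factor $\tfrac32$ on the cubic coefficient). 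The set $\Omega_N\cap E_M=\{f\in E_M:\Vert f\Vert_{L^2}\le\sqrt N\}$ is a ball, hence convex, and the coefficient basis can be rescaled to be orthonormal in $L^2(\mathbb T)$, so that the Fr\'echet derivative $\nabla F$ in $L^2$ plays the role of the Euclidean gradient and the associated Dirichlet form is $\int\Vert\nabla F\Vert_{L^2}^2\,d\nu_{\beta,N,M}$.

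Next I would examine the second derivative of the polynomial $W$ on $E_M$,
  \[ D^2W(f)[h,h]=2\Vert h\Vert_{H^{1/2}}^2+6\beta\int_{\mathbb T}f(x)h(x)^2\,\frac{dx}{2\pi}\qquad(h\in E_M), \]
whose quadratic part is $2\langle\vert D\vert h,h\rangle_{L^2}$; as every $h\in E_M$ is mean-zero, $\Vert h\Vert_{H^{1/2}}\ge\Vert h\Vert_{L^2}$. For the cubic term, Cauchy--Schwarz gives $\bigl|\int_{\mathbb T}fh^2\,dx/(2\pi)\bigr|\le\Vert f\Vert_{L^2}\Vert h\Vert_{L^4}^2$, and the embedding $\Vert h\Vert_{L^4}\le c_S\Vert h\Vert_{H^{1/2}}$, valid with $c_S$ \emph{independent of $M$}, follows from part (iii) of the Lemma applied to $th$: the Milin--Lebedev inequality yields the sub-Gaussian bound $\int_{\mathbb T}e^{th}\,d\theta/(2\pi)\le\exp\!\bigl(c\,t^2\Vert h\Vert_{H^{1/2}}^2\bigr)$ for all real $t$ and mean-zero $h$, hence control of all $L^p$ norms of $h$ by $\Vert h\Vert_{H^{1/2}}$. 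Therefore, for $f\in\Omega_N$, $\bigl|6\beta\int_{\mathbb T}fh^2\,dx/(2\pi)\bigr|\le 6c_S^2|\beta|\sqrt N\,\Vert h\Vert_{H^{1/2}}^2$, so, writing $\kappa:=3c_S^2$ and $\alpha:=1-\kappa|\beta|\sqrt N$,
  \[ D^2W(f)[h,h]\ \ge\ (2-2\kappa|\beta|\sqrt N)\Vert h\Vert_{H^{1/2}}^2\ \ge\ 2\alpha\Vert h\Vert_{L^2}^2\qquad(f\in\Omega_N). \]
In particular, if $|\beta|\sqrt N<1/\kappa$ then $\mathrm{Hess}\,W\ge 2\alpha I$ on $\Omega_N\cap E_M$; the same computation on $\Omega_N\cap H^{1/2}$ with $H_\beta$ in place of $W$ gives $D^2H_\beta(u)[h,h]\ge(1-2c_S^2|\beta|\sqrt N)\Vert h\Vert_{H^{1/2}}^2\ge\alpha\Vert h\Vert_{H^{1/2}}^2$, which is the claimed uniform convexity of $H_\beta$ on $\Omega_N$.

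Finally, $\nu_{\beta,N,M}$ is the normalised restriction of $e^{-W}\,dx$ to the convex body $\Omega_N\cap E_M$, on which $\mathrm{Hess}\,W\ge 2\alpha I$. By the Bakry--{\'E}mery criterion \cite{BakEm} in the form valid for uniformly log-concave measures on convex domains --- where the convexity of $\Omega_N$ is exactly what makes the boundary term in the $\Gamma_2$ (Bochner) identity for the reflected diffusion nonnegative --- $\nu_{\beta,N,M}$ satisfies a logarithmic Sobolev inequality with constant $1/\alpha$, which in particular implies (\ref{log-sob1}); since $\alpha=1-\kappa|\beta|\sqrt N$ does not depend on $M$, the inequality is uniform in $M$, as required for $M\to\infty$. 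One can avoid citing the convex-domain version by approximating ${\bf I}_{\Omega_N}$ from below by $e^{-V_k}$ with $V_k$ smooth, convex, increasing to the convex indicator of $\Omega_N$: on $\Omega_N$ one has $\mathrm{Hess}(W+V_k)\ge 2\alpha I$, while off $\Omega_N$ the growing Hessian of $V_k$ can be arranged, for $k=k(M)$ large, to dominate the possibly-negative Hessian of $W$, after which one passes the inequality to the limit $k\to\infty$.

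The step I expect to be the main obstacle is the uniform-in-$M$ control of the cubic term: this is precisely where the Moser--Trudinger/Milin--Lebedev inequality for $H^{1/2}(\mathbb T)$ --- equivalently, the presence of the Hilbert transform in the Hamiltonian, which supplies the $H^{1/2}$-coercivity of the quadratic part --- is indispensable, a crude Bernstein-type inequality on $E_M$ leaving an $M$-dependent constant that would destroy the limit. A secondary point requiring care is that $W$ is genuinely non-convex away from $\Omega_N$ (the cubic term dominates for $\Vert f\Vert_{L^2}$ large), so the \emph{global} Bakry--{\'E}mery theorem does not apply directly and the convexity of $\Omega_N$ must be exploited.
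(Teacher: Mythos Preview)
Your argument is correct and follows essentially the same route as the paper: compute the Hessian of the potential, observe that the quadratic part controls $\Vert h\Vert_{H^{1/2}}^2$, bound the cubic perturbation by $\vert\beta\vert\sqrt N\,\Vert h\Vert_{L^4}^2$ via Cauchy--Schwarz, use the embedding $H^{1/2}\hookrightarrow L^4$ with a constant independent of $M$, and then apply the Bakry--\'Emery criterion on the convex ball $\Omega_N$. The only noteworthy difference is in how the embedding $\Vert h\Vert_{L^4}\le C\Vert h\Vert_{H^{1/2}}$ is obtained: the paper writes $g=J_{1/2}\ast h$ with $J_{1/2}(x)=\sum_{n\ge1}n^{-1/2}\cos nx$, uses the asymptotic $J_{1/2}(x)\sim(\pi/2x)^{1/2}$ to place $J_{1/2}\in L^{4/3}$, and then Young's inequality gives $\Vert J_{1/2}\ast h\Vert_{L^4}\le\Vert J_{1/2}\Vert_{L^{4/3}}\Vert h\Vert_{L^2}$, yielding the explicit constant $\kappa=\Vert J_{1/2}\Vert_{L^{4/3}}^2$; you instead deduce the embedding from the Milin--Lebedev exponential inequality of the preceding Lemma. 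Both are perfectly valid; the paper's version has the minor advantage of producing an explicit $\kappa$, while yours makes the link to the Lemma more transparent and is arguably cleaner. Your handling of the convex-domain issue (either via the nonnegative boundary term in $\Gamma_2$ for the ball, or via the convex penalisation $V_k$) is also in the same spirit as the paper's invocation of ${\hbox{Ric}}_{\Omega_N}\ge0$.
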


\begin{proof}
With
  \[ \nabla f =\left( \frac{\partial f}{\partial a_j}, \frac{\partial f}{\partial b_j}\right)_{j=1}^\infty \]
we have the \textit{carr{\'e} du champ it{\'e}r{\'e}} operator from \cite{BakEm},
   \begin{equation}
      \Gamma_2(\psi )
        =\Vert {\hbox{Hess}}( \psi )\Vert^2_{HS}+ {\hbox{Hess}}(H_\beta )(\nabla \psi \otimes  \nabla\psi )
            +{\hbox{Ric}}_{\Omega_N}(\nabla \psi \otimes  \nabla\psi ).
   \end{equation}
To satisfy the Bakry--{\'E}mery condition \cite{BakEm}, it suffices to produce $\alpha>0$ such that
  \[ \Gamma_2 (\psi )\geq \alpha \Vert \nabla\psi \Vert^2_{L^2}
  \]
for all smooth $\psi :\Omega_N\rightarrow {\mathbb R}$. The terms 
$\Vert {\hbox{Hess}}( \psi )\Vert^2_{HS}$ and ${\hbox{Ric}}_{\Omega_N}(\nabla \psi \otimes  \nabla\psi )$ are nonnegative, so it suffices to show ${\hbox{Hess}}(H_\beta )(\nabla \psi \otimes \nabla \psi)\geq \alpha\Vert \nabla\psi \Vert^2_{L^2}$.  
Let
  \[ J_{1/2} (x )=\sum_{n=1}^\infty \frac{\cos n x}{\sqrt {n}}=\sum' {\frac{e^{inx}}{2\sqrt{\vert n\vert}}}\]
which is even and $2\pi$-periodic.
Standard estimates \cite[II:13--11]{Zyg} show that $J_{1/2}(x)- \left(\frac{\pi}{2x}\right)^{1/2}$ is bounded on $(0, \pi ]$ and hence that $J_{1/2} \in L^p(\mathbb{T})$ for $p < 2$. By Young's convolution inequality
  \[ \Vert J_{1/2}\ast h \Vert_{L^3}
     \leq \Vert J_{1/2}\ast h\Vert_{L^4}
     \leq \Vert J_{1/2}\Vert_{L^{4/3}} \, \Vert h \Vert_{L^2},\]
so $h \mapsto H_\beta (J_{1/2}\ast h)$ is bounded on $\Omega_N\subset L^2$. Thus we regard $H_\beta$ as a densely defined function on $\Omega_N$. The
Hessian is defined as a quadratic form on $H^{1/2}$ by
 \begin{equation}  
 \bigl\langle {\hbox{Hess}} (H_\beta)(u) , h\otimes h\bigr\rangle
    =\int_{\mathbb T} \HT \frac{dh}{dx}\, h(x) \,\frac{dx}{2\pi}
           +\beta \int_{\mathbb T} u(x) h(x)^2 \,\frac{dx}{2\pi}.
 \end{equation}
Observe that
 \begin{equation}\label{Fix}
 \bigl\langle {\hbox{Hess}} (H_\beta)(u) , J_{1/2}\ast h\otimes J_{1/2}\ast h\bigr\rangle
     =\int_{\mathbb T} h(x)^2 \, \frac{dx}{2\pi}
         +\beta \int_{\mathbb T} u(x)(J_{1/2}\ast h)(x)^2\, \frac{dx}{2\pi},
  \end{equation}
where
  \begin{align*}
  \beta \int_{\mathbb T} u(x)(J_{1/2}\ast h)(x)^2 \frac{dx}{2\pi}
      &\geq -\vert \beta\vert \ \Vert u\Vert_{L^2} \  \Vert J_{1/2}\ast h\Vert^2_{L^4}   \\
      &\geq -\vert\beta\vert \ \sqrt{N}\ \Vert J_{1/2}\Vert^2_{L^{4/3}}\ \Vert h\Vert^2_{L^2}.
  \end{align*}
Hence with $\kappa =\Vert J_{1/2}\Vert^2_{L^{4/3}}$ and $ h=\vert D\vert^{1/2}g$, we have
  \[ \bigl\langle {\hbox{Hess}} (H_\beta)(u) , g\otimes g\bigr\rangle
      \geq \bigl( 1- \kappa \vert \beta\vert N^{1/2}) \Vert g\Vert_{L^2}^2.
  \]
Thus $H_\beta$ is uniformly convex on $\Omega_N$.

It now follows by the Bakry--{\'E}mery criterion that the measures $\nu_{\beta, N,M}$ satisfy the logarithmic Sobolev inequality with constant
$\alpha =1-\kappa \vert \beta\vert N^{1/2}$.
\end{proof}

For more general $\beta$ and $N$ we can use the Holley--Stroock Theorem to deduce a logarithmic Sobolev inequality, although with a possibly much poorer constant.

\begin{thm}\label{logsobthrm}
For all $\beta, N>0$ the Gibbs measure $\nu_{\beta, N,M}$ of the periodic Benjamin--Ono equation satisfies the logarithmic Sobolev inequality (\ref{log-sob1}) with constant
  \begin{equation} \alpha \geq \frac{1}{2} \exp (-14\beta N^{3/2}(cN\beta^{1/2}+1)-N/2)\end{equation}
  for come absolute constant $c>0$.
\end{thm}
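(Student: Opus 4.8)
The plan is to deduce the inequality from the $\beta=0$ case by a Holley--Stroock perturbation. Write $\nu_{\beta,N,M}=Z^{-1}e^{-\beta\Phi}\,\nu_{0,N,M}$ where
\[
\Phi(f)=\int_{\mathbb T}\Bigl(\sum_{j=1}^M a_j\cos jx+b_j\sin jx\Bigr)^3\,\frac{dx}{2\pi},
\]
and $\nu_{0,N,M}$ is the product Gaussian with weight $\prod_{j=1}^M e^{-j(a_j^2+b_j^2)/2}$ conditioned on the convex ball $\Omega_N$. The first step is that $\nu_{0,N,M}$ itself satisfies (\ref{log-sob1}) with an absolute constant, uniformly in $M$: this is the Bakry--{\'E}mery computation from the proof of Proposition~\ref{prop2.1} at $\beta=0$, where ${\hbox{Hess}}(H_0)(u)[h,h]=\Vert h\Vert_{\dot H^{1/2}}^2\geq\Vert h\Vert_{L^2}^2$ and the ${\hbox{Ric}}_{\Omega_N}$ term is nonnegative since $\Omega_N$ is convex. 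Writing the weight as $\Vert f\Vert_{\dot H^{1/2}}^2=\tfrac12\Vert f\Vert_{L^2}^2+\bigl(\Vert f\Vert_{\dot H^{1/2}}^2-\tfrac12\Vert f\Vert_{L^2}^2\bigr)$, keeping the (still strictly convex and nondegenerate) second summand for Bakry--{\'E}mery and noting that on $\Omega_N$ the first summand ranges over an interval of length $N/2$, accounts for the prefactor $\tfrac12$ and the factor $e^{-N/2}$ in the stated bound.

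The substantive step is to control $e^{-\beta\Phi}$. Here one should not estimate $\Phi$ crudely; instead I would use the bound already appearing in the proof of Proposition~\ref{prop2.1}, namely Young's convolution inequality with the kernel $J_{1/2}$, together with H\"older's inequality $\bigl|\int u^3\bigr|\leq\Vert u\Vert_{L^2}\Vert u\Vert_{L^4}^2$ and the one-dimensional interpolation $\Vert u\Vert_{L^4}^2\leq C\Vert u\Vert_{L^2}\Vert u\Vert_{\dot H^{1/2}}$, which combine to give
\[
|\Phi(f)|\leq C\,N\,\Vert f\Vert_{\dot H^{1/2}}\qquad(f\in\Omega_N).
\]
Thus the cubic grows only linearly in the Dirichlet norm that the reference measure $\nu_{0,N,M}$ controls through its Gaussian weight $e^{-\Vert f\Vert_{\dot H^{1/2}}^2}$. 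Splitting $\beta\Phi$ into a bounded part, handled by Holley--Stroock proper, and a remainder absorbed by completing the square against the Gaussian weight, and optimising the split, introduces the remaining factor $14\beta N^{3/2}(cN\beta^{1/2}+1)$; assembling these pieces gives (\ref{log-sob1}) for $\nu_{\beta,N,M}$ with the claimed $\alpha$.

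The main obstacle is to make this perturbation estimate uniform in the truncation $M$. A naive bound on $\sup_{\Omega_N}|\Phi|$ is useless, because a band-limited function of degree $M$ with $\Vert f\Vert_{L^2}^2\leq N$ can concentrate --- for example a suitable multiple of the Dirichlet kernel $\sum_{j=1}^M\cos jx$ --- forcing $\Phi$ up to order $N^{3/2}M^{1/2}$. The argument must therefore exploit that the large values of $\Phi$ occur precisely where $\Vert f\Vert_{\dot H^{1/2}}$ is large, hence where $\nu_{0,N,M}$ has correspondingly small weight, together with the favourable sign of the boundary term of the convex set $\Omega_N$; carrying this through while keeping every constant explicit and independent of $M$ is the technical heart of the proof.
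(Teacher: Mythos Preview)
Your overall strategy—reference log-concave measure plus Holley--Stroock perturbation—matches the paper's, and you correctly flag the real difficulty: the cubic $\Phi$ is not uniformly bounded on $\Omega_N$ as $M\to\infty$. But the mechanism you propose for resolving it does not work, and this is the gap.

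Your plan is to use the amplitude estimate $|\Phi(f)|\le CN\Vert f\Vert_{\dot H^{1/2}}$ and then ``split $\beta\Phi$ into a bounded part \dots\ and a remainder absorbed by completing the square against the Gaussian weight.'' There is, however, no completing-the-square operation for a cubic functional. The linear-growth bound only says that $e^{-\beta\Phi}$ lies between $e^{\mp C\beta N\sqrt{H_0}}$; trading this against a share $\lambda H_0$ of the Gaussian via $C\beta N\sqrt{H_0}\le\lambda H_0+(C\beta N)^2/(4\lambda)$ bounds $\lambda H_0+\beta\Phi$ from \emph{below} on $\Omega_N$, but not from above, because $H_0=\Vert f\Vert_{\dot H^{1/2}}^2$ is itself unbounded on $\Omega_N$. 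Hence the Radon--Nikodym derivative against any Gaussian reference still has infinite oscillation and Holley--Stroock does not apply. Nor does an amplitude bound on $\Phi$ say anything about $\mathrm{Hess}\,\Phi$, which is what a direct Bakry--\'Emery argument would need. The sentence ``optimising the split introduces the factor $14\beta N^{3/2}(cN\beta^{1/2}+1)$'' asserts the outcome without supplying a mechanism. (Your side remark that the $\tfrac12$ and the $e^{-N/2}$ arise by peeling off $\tfrac12\Vert f\Vert_{L^2}^2$ from the free Hamiltonian is also reverse-engineered; in the paper these constants come from the bounded perturbation, not from degrading the free LSI constant.)

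The paper's resolution is a \emph{frequency} splitting $u=u_H+u_T$ at a level $k$ with $k\ge c^4\beta^4N^2$. On $\Omega_N$ one has the pointwise bound $\Vert u_H\Vert_\infty\le\sqrt{(2k+1)N}$, so every cubic term containing at least one factor of $u_H$ is genuinely bounded; these terms (together with $\tfrac12|\hat u(0)|^2$) form the perturbation $W$, and $\Vert W\Vert_\infty\le 14\beta N^{3/2}(cN\beta^{1/2}+1)+N/2$ is precisely the exponent in the theorem. The leftover pure-tail cubic $-\beta\int u_T^3$ is not bounded, but its \emph{Hessian} is small: since $f_T$ is supported on frequencies $|n|\ge k$, one gains $k^{-1/4}$ in the estimate $\bigl|3\beta\int u_T f_T^2\bigr|\le 3C_1\beta\sqrt{N}\,k^{-1/4}\Vert f\Vert_{\dot H^{1/2}}^2$, and the choice of $k$ makes this at most $\tfrac12\Vert f\Vert_{\dot H^{1/2}}^2$. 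Thus $H_\beta-W$ is uniformly convex and Bakry--\'Emery applies to it directly. This spectral-gap step, not any amplitude estimate for $\Phi$, is the missing idea that makes the perturbation bounded and the argument uniform in $M$.
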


\begin{proof} Rather than work directly with the Hamiltonian $H_\beta$ we first show a Bakry--{\'E}mery condition for a perturbed Hamiltonian of the form $H_\beta - W$.

We choose some integer $k$ such that $c^4\beta^4 N^2 +1\ge k \ge c^4\beta^4 N^2$, where $c>0$ is a large constant to be chosen below. One can split any function $u( x  )=\sum_{n=-\infty}^\infty \hat u(n) e^{in x }$ in $L^2(\mathbb{T})$ into a sum $u_H( x  )=\sum_{n=-k}^k \hat u(n) e^{in x }$ consisting of the low Fourier modes, and a tail $u_T( x  )=\sum_{\vert n\vert >k} \hat u(n) e^{in x }$ consisting of the high Fourier modes. This is an orthogonal decomposition so if $u\in \Omega_N$, then $u_H, u_T\in \Omega_N$ too. Furthermore, if $u \in \Omega_N$, the Cauchy--Schwarz inequality applied to the Fourier coefficients of $u_H$ gives $\Vert u_H\Vert_\infty\leq \sqrt{2k+1} N^{1/2}$.

Define the potential $W$ on $\Omega_N$ by
    \[ W(u)=-\beta\int_{{\mathbb T}} u_H( x  )^3 \frac{d x }{2\pi}
       -3\beta \int_{{\mathbb T}} u_H( x  )^2u_T( x ) \frac{d x }{2\pi}
         -3\beta\int_{{\mathbb T}} u_H( x  )u_T( x  )^2 \frac{d x }{2\pi}
         -\frac{1}{2}\vert \hat u(0)\vert^2.
  \]
So we have 
   \begin{align}
   \vert W(u)\vert&\leq \vert\beta \Vert u_H\Vert_{L^2}^2\Vert u_H\Vert_{L^\infty}+3\vert\beta\vert \Vert u_H\Vert_{L^\infty}\Vert u_H\Vert_{L^2}\Vert u_T\Vert_{L^2}+3\beta \Vert u_H\Vert_{L^\infty}\Vert u_T\Vert^2_{L^2}+{{1}\over{2}}\Vert u\Vert_{L^2}^2\nonumber\\
   &\leq 7\beta \sqrt{2k+1} N^{3/2}+\frac{N}{2}\nonumber\\
   &\leq 14\beta N^{3/2}(c\beta^{1/2}N+1)+\frac{N}{2},
   \end{align}
and so $W$ is bounded.

Consider now
  \[ H_\beta (u)-W(u)
  =\frac{\vert \hat u(0)\vert^2}{2}
        +\frac{1}{2}\sum_{n=-\infty}^\infty \vert n\vert \vert\hat u(n)\vert^2
        -\beta \int_{\mathbb T} u_T( x  )^3\, d x .
  \]
On computing the Hessian, one finds
  \begin{align} {\hbox{Hess}}(H_\beta -W)(u) (f\otimes f)&=\lim_{\varepsilon\rightarrow 0+}\varepsilon^{-2}\bigl( (H_\beta -W)(u+\varepsilon f) +(H_\beta -W)(u-\varepsilon f) \nonumber \\
  & \qquad -2(H_\beta -W)(u)\bigr)\nonumber\\
    &=\vert \hat f(0)\vert^2
        +\sum_{n=-\infty}^\infty \vert n\vert \vert \hat f(n)\vert^2
        -3\beta \int_{\mathbb T} u_T( x  )f_T( x  )^2\, \frac{d x }{2\pi}.
  \end{align}
Now
   \begin{align*}
   \left\vert3\beta\int u_T(x )f_T(x)^2dx\right\vert
     &\leq 3 \beta  \Vert u_T\Vert_{L^2} \, \Vert \Delta^{-3/16}\Delta^{3/16}f_T\Vert^2_{L^4}\\
     &\leq 3 \beta \Vert u_T \Vert_{L^2}
            \, \left\Vert  \sumd {\frac{e^{-inx}}{\vert n\vert^{3/8}}}\right\Vert^2_{L^{4/3}}
            \, \bigl\Vert\Delta^{3/16}f_T\bigr\Vert^2_{L^2}.
   \end{align*}
Standard estimates \cite[Section V.2]{Zyg} show that
\[ \sumd {\frac{e^{-inx}}{\vert n\vert^{3/8}}} = \frac{2\, \Gamma(\frac{5}{8})}{x^{5/8}} + O(1), \]
and so
  \[ \int_0^{2\pi} \left\vert \sumd {\frac{e^{-inx}}{\vert n\vert^{3/8}}} \right\vert^{4/3} {\frac{dx}{2\pi}}
     \leq C\int_0^{2\pi} {\frac{dx}{  x ^{5/6}}}  = C_1,
  \]
say. Also, the large Fourier modes satisfy
\begin{align} \bigl\Vert \Delta^{3/16}f_T \bigr\Vert_{L^2}^2&
     \leq \sum_{n=-\infty}^{-k} \vert n\vert^{3/4} \vert \hat f(n)\vert^2+\sum_{n=k}^\infty  \vert n\vert^{3/4} \vert \hat f(n)\vert^2\nonumber\\
     &\leq   k^{-1/4}\sum_{n=-\infty}^{-k}\vert n\vert \vert \hat f(n)\vert^2+\sum_{n=k}^\infty  \vert n\vert \vert \hat f(n)\vert^2\nonumber\\
     &\leq k^{-1/4} \Vert f \Vert^2_{H^{1/2}}. \end{align}
Thus
  \[ \left\vert3\beta\int u_T(x )f_T(x)^2dx\right\vert
      \leq 3 \beta \sqrt{N} C_1 k^{-1/4} \Vert f \Vert^2_{H^{1/2}}
      \leq \frac{3C_1}{c} \Vert f\Vert^2_{H^{1/2}},   \]
and so
   \[ {\hbox{Hess}}(H_\beta -W)(u) (f\otimes f)
   \geq \left(1-\frac{3C}{c}\right) \Vert f\Vert^2_{H^{1/2}},
   \]
so that $H_\beta -W$ satisfies the Bakry--{\'E}mery condition for suitably large $c$.

By the Holley--Stroock theorem \cite[p.~1184]{HS}, $H_\beta =(H_\beta-W)+W$ is a bounded perturbation of a uniformly convex potential, hence satisfies a logarithmic Sobolev inequality with constant
  \[ \alpha \ge  2^{-1}\exp (-\Vert W\Vert_{\infty}) \geq 2^{-1}\exp
   \bigl(-14\beta N^{3/2}(cN\beta^{1/2} +1)-N/2\bigr). \]
\end{proof}

\section{Transport inequalities}\label{S:Transport}

Transport inequalities relate the cost of transporting a probability measure $\omega$ onto a particular reference measure $\rho_0$ with some functional such as the relative entropy of $\omega$ with respect to $\rho_0$.
There are well-known connections between such inequalities and concentration of measure inequalities and logarithmic Sobolev inequalities. In this section we use the results of Section~\ref{S:Gibbs} to obtain a result of this type.

Given a periodic $C^2$ real potential function $Q(e^{i\theta})$, we consider the energy functional
   \begin{equation*}
     E_Q(\omega )=\int_{\mathbb T} Q(e^{it})\, \omega (dt)+\int\!\!\!\int_{{\mathbb T}^2\setminus\Delta}\log {\frac{1}{\vert e^{i\psi}-e^{it}\vert}}\, \omega (d\psi )\omega (dt),
     \end{equation*}
where $\omega\in {\mathcal P}({\mathbb T})$ and $\Delta =\{ (e^{i\psi}, e^{it}): t=\psi\}$. Then we define the minimum energy by
   \begin{equation*}
   E_Q=\inf_\omega \bigl\{E_Q(\omega ):\omega \in {\mathcal P}({\mathbb T})\bigr\}.
   \end{equation*}
We shall denote by $\rho_0$ the probability density function of the unique measure that minimises $E_Q$, and satisfies
   \begin{equation}\label{defQ}Q(e^{i\theta})
      =2\int_{\mathbb T} \log \vert e^{i\theta} -e^{i\phi}\vert \, \rho_0 (\phi)\, d\phi+C
   \end{equation}
for some constant $C$. We call $Q$ the potential corresponding to the equilibrium density $\rho_0$.

\begin{defn} Suppose that $\omega \in {\mathcal P}({\mathbb T})$.
\begin{enumerate}[(i)]
\item The relative free entropy of $\omega$ with respect to $\rho_0$ is
   \begin{equation}\label{deffreeent}
     \tilde \Sigma_Q(\omega )=\Sigma (\omega \mid \rho_0)=E_Q(\omega )-E_Q.
   \end{equation}
\item  If further $\omega, \rho_0\in L^3$, then the relative free information of $\omega$ with respect to $\rho_0$ is
   \begin{equation}\label{deffreeinf}
   I_F(\omega\mid \rho_0)
       =\int_{\mathbb T}\bigl( {\mathcal H}(\omega -\rho_0)(x)\bigr)^2 \, \omega ( dx).
   \end{equation}
\end{enumerate}
\end{defn}

Clearly $\tilde \Sigma_Q(\omega )\geq 0$, and $\tilde \Sigma_Q(\omega )=0$ if and only if $\omega (d\theta )=\rho_0(\theta )\,d\theta $ up to a set of zero capacity.

The density $\rho_0$ satisfies the principal value integral equation
   \[
   {\frac{d}{d\theta}}Q(e^{i\theta})
      =2\, {\hbox{p.v.}}\int_{\mathbb T} \cot {\frac{\theta-\psi}{2} }\, \rho_0(\psi )d\psi .
   \]
We are especially interested in the case in which there exists $\kappa>0$ such that
   \begin{equation}\label{freecurv}
     {\frac {d^2}{d\theta^2}}Q(e^{i\theta})\geq \kappa -1/2\qquad (e^{i\theta} \in {\mathbb T}).
   \end{equation}
Whereas the minimiser $\rho_0(\theta )\, d\theta$ is absolutely continuous, the properties of $\rho_0$ are obtained via an approximation in which  $\rho_0(\theta )\, d\theta$ is the weak limit of a convex combination of $n$ point masses as $n\rightarrow\infty$; \cite{HPU}. Using $Q$, one can introduce a probability measure (\ref{matrixdist}) on the group $U(n)$ of $n\times n$ unitary complex matrices which is invariant under unitary conjugation $X \mapsto UXU^\dagger$. The
typical $X \in U(n)$ has eigenvalues $e^{i\theta_1}, \dots ,e^{i\theta_n}$, and empirical eigenvalue distribution
$\omega_n =n^{-1}\sum_{j=1}^n \delta_{e^{i\theta_j}}$, and as $n\rightarrow\infty$, the $\omega_n$ converge weakly almost surely to $\rho_0 (x)\, dx.$  

The statistical properties of $\omega_n$ for $n$ large are described by eigenvalues of matrices $X \in U(n)$, where the probability measure on $U(n)$ is given by the Haar probability measure and a scalar potential $v(x)=2\int_{\mathbb T} \log \vert e^{ix}-e^{iy}\vert \rho_0(y)\, dy.$ There are quantitative results describing the convergence of $\omega_n\rightarrow \rho_0$ in the weak topology.

\begin{defn} (Wasserstein distance) Let $({\mathcal X},d)$ be a compact metric space.  The  Wasserstein metric on ${\mathcal P}({\mathcal X})$ is
   \[
      W_2(\mu_0, \mu_1)
         =\inf_\pi \left\{ \left( \int\!\!\!\int_{{\mathcal X}^2}d(x,y)^2\, \pi (dxdy)\right)^{1/2}\right\},
   \]
where the infimum is taken over all the probability measures $\pi$ on ${\mathcal X}^2$ that have marginals $\mu_0(dx)$ and $\mu_1(dy)$. We shall use the notation ${\mathcal P}_2({\mathcal X})$ as shorthand for the metric space $(\mathcal{P}({\mathcal X}),W_2)$.
\end{defn}

We introduce the following temporary definition, which we later reconcile with more standard definitions, as in (\ref{BQformula}) and (\ref{freetrans}).

\begin{defn}
We shall say that a probability density function $\rho_0$ on ${\mathbb T}$ satisfies the free transportation inequality if there exists $C>0$ such that
    \begin{equation}\label{FTI}
        W_2(\rho, \rho_0)\leq C\Vert\rho -\rho_0\Vert_{H^{-1/2}}
   \end{equation}
for all probability density functions $\rho$ on ${\mathbb T}$.
\end{defn}

\begin{prop}\label{prop4.4}
Let $\nu$ be a probability measure on $(\Omega_N, L^2)$ that is absolutely continuous with respect to $\nu_{\beta, N,M}$. Then, under the hypotheses of Theorem \ref{logsobthrm}, $\nu_{\beta, N}$ satisfies the transportation cost inequality
   \begin{equation}\label{Tal}
      W_2^2(\nu, \nu_{\beta, N,M})
          \leq {\frac{2}{\alpha}} \int_{\Omega_N} \log \left({\frac {d\nu}{d\nu_{\beta ,N,M}}}\right)\, d\nu.
   \end{equation}
\end{prop}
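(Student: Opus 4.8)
The plan is to obtain (\ref{Tal}) from the logarithmic Sobolev inequality of Theorem~\ref{logsobthrm} by the Otto--Villani theorem: in the normalization used in (\ref{log-sob1}), a probability measure $\nu_0$ on a length space for which $\int F \log\bigl(F^2/\int F^2\,d\nu_0\bigr)\,d\nu_0\le\frac{2}{\alpha}\int\Vert\nabla F\Vert^2_{L^2}\,d\nu_0$ for all $F\in C^1$ automatically satisfies the quadratic transportation cost inequality $W_2^2(\nu,\nu_0)\le\frac{2}{\alpha}\,H(\nu\mid\nu_0)$ for every $\nu\ll\nu_0$, where $W_2$ and $\nabla$ refer to the same metric and $H(\nu\mid\nu_0)=\int\log(d\nu/d\nu_0)\,d\nu$. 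Since the gradient in (\ref{log-sob1}) is the $L^2(\mathbb{T})$-Fr\'echet derivative and the $W_2$ in (\ref{Tal}) is taken on $(\Omega_N,\Vert\cdot\Vert_{L^2})$, the constants $2/\alpha$ will match exactly.

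First I would reduce to a finite-dimensional problem. The measure $\nu_{\beta,N,M}$ is carried by $\Omega_N\cap V_M$, where $V_M$ is the $2M$-dimensional span of $\{\cos jx,\sin jx:1\le j\le M\}$ inside $L^2(\mathbb{T},\mathbb{R})$; since $\nu\ll\nu_{\beta,N,M}$, the measure $\nu$ is carried there too. The set $\Omega_N\cap V_M$ is a closed Euclidean ball, hence a convex body, and the metric it inherits from $L^2(\mathbb{T})$ is a genuine geodesic metric, its geodesics being straight segments. Consequently $W_2$ computed on $(\Omega_N,\Vert\cdot\Vert_{L^2})$ between two measures supported in $\Omega_N\cap V_M$ coincides with $W_2$ computed intrinsically on the convex body $\Omega_N\cap V_M$, and (\ref{Tal}) becomes a statement about a finite-dimensional measure with a density on a convex body, to which the classical machinery applies.

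It then remains to verify the Otto--Villani hypotheses in this setting. Theorem~\ref{logsobthrm} supplies precisely the LSI in the required form, and on the slice $V_M$ the operator $\nabla$ is the ordinary Euclidean gradient. Since $\Omega_N\cap V_M$ is a convex body, the version of the Otto--Villani theorem valid on convex domains applies --- equivalently, one invokes the Hamilton--Jacobi / hypercontractivity characterisation of the LSI (in the style of Bobkov--Gentil--Ledoux), which holds on an arbitrary length space and so bypasses any boundary issue --- and yields $W_2^2(\nu,\nu_{\beta,N,M})\le\frac{2}{\alpha}\int_{\Omega_N}\log(d\nu/d\nu_{\beta,N,M})\,d\nu$, which is exactly (\ref{Tal}). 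For the Euclidean statement one may also refer to \cite{V2003}.

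The main obstacle is precisely the boundary $\partial\Omega_N$: the textbook Otto--Villani argument is cleanest for a smooth strictly positive density on all of $\mathbb{R}^n$, whereas $\nu_{\beta,N,M}$ has a density containing the indicator ${\bf I}_{\Omega_N}$ of a ball. I expect this to be genuinely harmless because $\Omega_N$ is convex --- exactly the condition under which the ``LSI implies $T_2$'' implication persists on a domain --- and the Hamilton--Jacobi route makes this transparent, since the inf-convolution semigroup $Q_tf(x)=\inf_y\bigl(f(y)+\tfrac{1}{2t}\Vert x-y\Vert_{L^2}^2\bigr)$ is defined on $\Omega_N\cap V_M$ without reference to smoothness. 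A secondary, purely bookkeeping matter is to keep the $2/\alpha$ normalisations in (\ref{log-sob1}) and (\ref{Tal}) aligned so that no stray factor of $2$ is introduced.
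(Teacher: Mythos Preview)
Your approach is the same as the paper's: both derive (\ref{Tal}) from the logarithmic Sobolev inequality of Theorem~\ref{logsobthrm} by invoking the Otto--Villani theorem that LSI implies $T_2$, with the paper simply citing \cite[p.~292]{V2003}. Your write-up is more detailed than the paper's one-line proof---in particular your reduction to the finite-dimensional convex body $\Omega_N\cap V_M$ and your remarks on handling the boundary via convexity or the Bobkov--Gentil--Ledoux Hamilton--Jacobi route are justifications the paper leaves implicit in the citation.
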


\begin{proof}
This follows from Theorem  \ref{logsobthrm} by a result of Otto and Villani; see \cite[p.~292]{V2003}.
\end{proof}

The right-hand side of (\ref{Tal}) involves the relative entropy of $\nu$ with respect to $\nu_{\beta, N,M}$. Next we consider a version of the free energy. For $f\in H^{-1/2}$ and $u\in H^{1/2}$ write $\langle f,u\rangle =\ds \int_{\mathbb T} f(x)u(x) \, \frac{dx}{2\pi}$. Let
   \[\phi (f)
     =\log\int_{\Omega_N} \exp (\langle f,u\rangle ) \, \nu_{\beta, N} (du)
         -\int_{\Omega_N}  \langle f,u\rangle \, \nu_{\beta, N} (du) \qquad (f\in H^{-1/2})
    \]
be the logarithmic moment generating function.

\begin{cor}\label{cor2.5} The logarithmic moment generating function 
$\phi $ is a convex function on $H^{1/2}$ and there exists $C(\beta , N)$ such that $\vert \phi (f)\vert\leq C(\beta, N)\Vert f\Vert^2_{H^{-1/2}}$  for all $f\in H^{-1/2}$.
\end{cor}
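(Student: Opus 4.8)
The plan is to derive both assertions from the logarithmic Sobolev inequality of Theorem \ref{logsobthrm}, using the classical chain of implications: a log-Sobolev inequality with constant $\alpha$ implies a Gaussian concentration (Herbst) bound, which in turn forces the logarithmic moment generating function to grow at most quadratically. First I would note that for each fixed $f\in H^{-1/2}$ the linear functional $u\mapsto \langle f,u\rangle$ is, by the duality $(H^{1/2},H^{-1/2})$ recorded before Proposition~\ref{prop2.1}, continuous on $\Omega_N\cap H^{1/2}$; moreover its $L^2$-gradient is the fixed vector in $L^2$ representing $f$ through the Riesz map, so that $\Vert\nabla\langle f,u\rangle\Vert_{L^2}$ is a constant comparable to $\Vert f\Vert_{H^{-1/2}}$ (up to the Hilbert--Schmidt inclusion constant). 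Strictly, $u\mapsto\langle f,u\rangle$ need only be Lipschitz rather than $C^1$, so I would first approximate $f$ by finitely supported Fourier data and pass to the limit at the end, or invoke the standard fact that the log-Sobolev inequality extends to Lipschitz functions with the Lipschitz constant replacing $\Vert\nabla F\Vert_\infty$.

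Next I would run the Herbst argument. Set $\Lambda(\lambda)=\log\int_{\Omega_N}e^{\lambda\langle f,u-\bar u\rangle}\,d\nu_{\beta,N,M}$, where $\bar u=\int u\,d\nu_{\beta,N,M}$, so that $\Lambda(0)=\Lambda'(0)=0$. Applying \eqref{log-sob1} to $F=e^{\lambda\langle f,u\rangle/2}$ and using that $\Vert\nabla\langle f,\cdot\rangle\Vert_{L^2}^2\le c_0^2\Vert f\Vert_{H^{-1/2}}^2$ for an absolute constant $c_0$ (the norm of the inclusion $H^{-1/2}\hookrightarrow L^2$ restricted to the relevant subspace, which is bounded on $\Omega_N$), one obtains the differential inequality $\lambda\Lambda'(\lambda)-\Lambda(\lambda)\le \tfrac{\lambda^2 c_0^2}{2\alpha}\Vert f\Vert_{H^{-1/2}}^2$, i.e. $(\Lambda(\lambda)/\lambda)'\le \tfrac{c_0^2}{2\alpha}\Vert f\Vert_{H^{-1/2}}^2$. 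Integrating from $0$ to $1$ gives $\Lambda(1)\le\tfrac{c_0^2}{2\alpha}\Vert f\Vert_{H^{-1/2}}^2$, which is exactly $\phi(f)\le C(\beta,N)\Vert f\Vert_{H^{-1/2}}^2$ with $C(\beta,N)=c_0^2/(2\alpha)$ and $\alpha$ as in Theorem~\ref{logsobthrm}. Replacing $f$ by $-f$ gives the same bound for the lower tail, and since $\phi(f)\ge0$ always (by Jensen applied to the exponential, $\phi$ is the difference of $\log\mathbb{E}e^{X}$ and $\mathbb{E}X$, which is nonnegative), we get $|\phi(f)|\le C(\beta,N)\Vert f\Vert_{H^{-1/2}}^2$.

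Convexity of $\phi$ on $H^{1/2}$ is separate and elementary: $f\mapsto\log\int e^{\langle f,u\rangle}\,d\nu$ is convex by Hölder's inequality (it is a cumulant generating function), while $f\mapsto\int\langle f,u\rangle\,d\nu$ is linear, so their difference is convex. I would state this first, before the quantitative bound. The main obstacle I anticipate is purely a matter of care rather than depth: justifying that \eqref{log-sob1}, which as stated is for $F\in C^1(\Omega_N;\mathbb{R})$, may be applied to the exponential $e^{\lambda\langle f,u\rangle/2}$ on the bounded set $\Omega_N$ — one needs the function and its gradient to be genuinely in the relevant $L^2(\nu_{\beta,N,M})$ spaces, which is automatic here because $\langle f,\cdot\rangle$ is bounded on $\Omega_N$ (by Cauchy--Schwarz, $|\langle f,u\rangle|\le\Vert f\Vert_{L^2}\sqrt N$ after identifying $f\in H^{-1/2}$ with an $L^2$ element via $J_{1/2}$, or more carefully $|\langle f,u\rangle|\le\Vert f\Vert_{H^{-1/2}}\Vert u\Vert_{H^{1/2}}$ with the $H^{1/2}$ norm controlled on the support of the Gibbs measure). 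Once this integrability is in hand the Herbst computation is routine.
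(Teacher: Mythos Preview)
Your overall plan matches the paper's: convexity via H\"older's inequality, the quadratic upper bound via the Herbst argument driven by the logarithmic Sobolev inequality of Theorem~\ref{logsobthrm}, and nonnegativity of $\phi$ from Jensen. Where the paper simply cites \cite[Theorem~22.17]{V2009} for the Herbst step, you write out the differential inequality explicitly, which is fine.

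There is, however, a genuine error at the point where you control the gradient. You assert $\Vert\nabla\langle f,\cdot\rangle\Vert_{L^2}\le c_0\Vert f\Vert_{H^{-1/2}}$, invoking ``the norm of the inclusion $H^{-1/2}\hookrightarrow L^2$''. No such inclusion exists; the bounded embedding runs the other way, $L^2\hookrightarrow H^{-1/2}$, and $\Vert f\Vert_{L^2}$ can be arbitrarily large relative to $\Vert f\Vert_{H^{-1/2}}$ (take $f=\cos nx$ and let $n\to\infty$). As written, your Herbst computation therefore only delivers $\phi(f)\le C\Vert f\Vert_{L^2}^2$, which is strictly weaker than the corollary and is not even defined for $f\in H^{-1/2}\setminus L^2$. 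The paper sidesteps this entirely: the Bakry--\'Emery computation behind Proposition~\ref{prop2.1} and Theorem~\ref{logsobthrm} actually bounds ${\hbox{Hess}}(H_\beta)(g\otimes g)$ from below by $\alpha\Vert g\Vert_{H^{1/2}}^2$, so the Dirichlet form on the right of \eqref{log-sob1} is an $H^{1/2}$ energy. With that reading, $u\mapsto\langle f,u\rangle$ is Lipschitz on $\Omega_N$ for the $H^{1/2}$ norm with Lipschitz constant exactly $\Vert f\Vert_{H^{-1/2}}$, by the very definition of the dual pairing---no embedding is needed. This is precisely how the paper's proof phrases it. Make that single correction and your Herbst argument goes through and coincides with the paper's.
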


\begin{proof} By H\"older's inequality, $f\mapsto \phi (f)$ is a convex function. Also, $u\mapsto \langle f,u\rangle$ is Lipschitz on $\Omega_N$ with respect to the norm of $H^{1/2}$, with Lipschitz constant $\Vert f\Vert_{H^{-1/2}}$,  and mean
    \[\mu =\int_{\Omega_N}  \langle f,u\rangle\, \nu_{\beta, N} (du). \]
The logarithmic Sobolev inequality implies that $\nu_{\beta ,N}$ satisfies a Gaussian-style concentration inequality by \cite[Theorem 22.17]{V2009}, which in this case gives
   \[ \int_{\Omega_N} \exp \bigl(\langle f,u\rangle \bigr)\, \nu_{\beta, N} (du)
    \leq \exp \bigl( C(\beta, N)\Vert f\Vert^2_{H^{-1/2}}\bigr),\]
for some constant $C(\beta ,N)>0$ independent of $f$.
 We deduce that $\phi$ satisfies the concentration inequality.
\end{proof}

\section{Travelling wave solutions}\label{S:Travelling}

We now contrast the solutions described by convexity result, Proposition \ref{prop2.1} with the travelling wave solution, which we obtain from the Poisson kernel for ${\mathbb D}$.  Let $c=(1+r^2)/(1-r^2)$, and let
  \begin{equation}\label{w-def}
    w(x,t) =\frac{-1}{\beta } P_r(x-ct)
  \end{equation}
which has $N(w)=\beta^{-2}(1-r^2)^{-1}(1+r^2)$, so that $N(w)\rightarrow\infty$ as $r\rightarrow 1-$.

\begin{prop}\label{prop3.1}
There exists $r<1$ such that the function
$w$ defined in (\ref{w-def}) gives a travelling wave solution of (\ref{BO-eqn}) such that $u\mapsto H_\beta (u)$ is not convex at $u=w$.
\end{prop}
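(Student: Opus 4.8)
The plan is to isolate a single identity for the Poisson kernel $P_r$ and use it twice. Put $c=(1+r^2)/(1-r^2)$; the key identity is
\[
  P_r^2=\HT P_r'+cP_r .
\]
It is checked on Fourier coefficients. From $P_r(\theta)=\sum_n r^{|n|}e^{in\theta}$ one gets, by summing a geometric series, $\widehat{P_r^2}(n)=\sum_k r^{|k|+|n-k|}=r^{|n|}\bigl((|n|+1)-(|n|-1)r^2\bigr)/(1-r^2)=(|n|+c)r^{|n|}$ for every $n\in\mathbb Z$, while $\HT P_r'$ has $n$-th coefficient $(-i\sgn n)(in)r^{|n|}=|n|r^{|n|}$ and $cP_r$ has $n$-th coefficient $cr^{|n|}$; comparing coefficients gives the identity. (Equivalently $\HT P_r'=r\,\partial_rP_r$, and the identity follows from the closed form $P_r(\theta)=(1-r^2)/(1-2r\cos\theta+r^2)$.) For $0<r<1$ the kernel $P_r$ is $C^\infty(\mathbb T)$, so $w$ is twice continuously differentiable in $(x,t)$; substituting $w(x,t)=-\beta^{-1}P_r(x-ct)$ into (\ref{BO-eqn}), passing to the moving frame and integrating once in the frame variable (each term of (\ref{BO-eqn}) being an $x$-derivative), the equation reduces to the identity above, so $w$ is a travelling-wave solution of (\ref{BO-eqn}) (the direction of propagation being that fixed by the sign conventions of Section~\ref{S:Intro}).

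For the non-convexity I use the Hessian of $H_\beta$ from the proof of Proposition~\ref{prop2.1}: viewed as a quadratic form on $H^{1/2}$,
\[
  \bigl\langle {\hbox{Hess}}(H_\beta)(u),h\otimes h\bigr\rangle
  =\int_{\mathbb T}\HT\frac{dh}{dx}\,h\,\frac{dx}{2\pi}+\beta\int_{\mathbb T}u\,h^2\,\frac{dx}{2\pi}.
\]
I evaluate this at $u=w=w(\cdot,0)=-\beta^{-1}P_r$ and at the test direction $h=P_r\in H^{1/2}$. Since $\beta w=-P_r$, the second term equals $-\int_{\mathbb T}P_r^3\,\frac{dx}{2\pi}$, and by the identity together with $\int_{\mathbb T}P_r^2\,\frac{dx}{2\pi}=\sum_n r^{2|n|}=(1+r^2)/(1-r^2)=c$,
\[
  \int_{\mathbb T}P_r^3\,\frac{dx}{2\pi}
  =\int_{\mathbb T}P_r\bigl(\HT P_r'+cP_r\bigr)\frac{dx}{2\pi}
  =\int_{\mathbb T}P_r\,\HT P_r'\,\frac{dx}{2\pi}+c^2 .
\]
Hence $\bigl\langle {\hbox{Hess}}(H_\beta)(w),P_r\otimes P_r\bigr\rangle=\int_{\mathbb T}P_r\,\HT P_r'\,\frac{dx}{2\pi}-\int_{\mathbb T}P_r^3\,\frac{dx}{2\pi}=-c^2<0$. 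Since $t\mapsto H_\beta(w+tP_r)$ therefore has strictly negative second derivative at $t=0$, the Hessian of $H_\beta$ at $w$ is indefinite and $H_\beta$ is not convex at $w$.

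This argument works for every $r\in(0,1)$; taking $r\to1-$ makes $N(w)=\beta^{-2}c\to\infty$, so the proposition shows that the convexity of $H_\beta$ on $\Omega_N$ established in Proposition~\ref{prop2.1} for small $|\beta|\sqrt N$ genuinely fails once $N$ is large, confirming the two-regime picture of Section~\ref{S:Intro}. I do not expect any real obstacle: the whole computation rests on the single identity $P_r^2=\HT P_r'+cP_r$, and the only points needing care are the geometric-series bookkeeping that produces $\widehat{P_r^2}$ and keeping track of the sign conventions of Section~\ref{S:Intro} that fix the direction in which the wave $w$ travels.
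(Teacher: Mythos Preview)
Your proof is correct. For the travelling-wave part you use the same underlying identity as the paper---your $P_r^2=\HT P_r'+cP_r$ is exactly the paper's $\HT f'=-cf-\beta f^2$ rewritten for $f=-\beta^{-1}P_r$---so there is no essential difference there (and your hedge about the direction of propagation is prudent, since the sign of $c$ depends on the conventions and the paper's own substitution step contains a pair of cancelling sign slips).

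The non-convexity argument, however, is genuinely different from the paper's and in fact cleaner. The paper returns to the $J_{1/2}$-framework of Proposition~\ref{prop2.1}, computes
\[
\bigl\langle\mathrm{Hess}(H_\beta)(w),\,J_{1/2}\!\ast h\otimes J_{1/2}\!\ast h\bigr\rangle
=\int h^2-\int P_r\,(J_{1/2}\!\ast h)^2,
\]
sends $r\to 1^-$ so that the second integral collapses to $(J_{1/2}\!\ast h)(0)^2$, and then uses $\sup\{(J_{1/2}\!\ast h)(0):\|h\|_{L^2}\le 1\}=\infty$ to find a direction of negative curvature for $r$ sufficiently close to $1$. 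You instead take the test direction $h=P_r$ itself and reuse the very identity that produced the travelling wave to get $\langle\mathrm{Hess}(H_\beta)(w),P_r\otimes P_r\rangle=-c^2<0$. This is more self-contained (it needs nothing from Proposition~\ref{prop2.1} except the Hessian formula), avoids any limiting argument, and gives the stronger conclusion that convexity fails at $w$ for \emph{every} $r\in(0,1)$, not just for $r$ near $1$. The paper's route, on the other hand, makes transparent the link back to the convexity estimate of Proposition~\ref{prop2.1} and to the $J_{1/2}$ kernel, which is thematically useful even if less efficient here.
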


\begin{proof}
 Let $f(\theta )=-\beta^{-1} P_r(\theta )$. Using the expansion $P_r(\theta )=\sum_{n=-\infty}^\infty r^{\vert n\vert} e^{in\theta}$, one can verify that
 \[ \HT f'(\theta ) = -\frac{1}{\beta} 
                   \sum_{n=1}^\infty n r^n \bigl(e^{in \theta} + e^{-in \theta} \bigr)
       = \frac{4 r^2 - 2(r^2+1)r \cos(\theta)}{\beta(1-2 r \cos(\theta)+r^2)^2}, \]
and consequently that $\HT f'(\theta ) = -c f(\theta) - \beta f(\theta)^2$. Thus, if $w(x,t)=f(x-ct)$ then
  \begin{align*}
     \frac{\partial w}{\partial t} + \HT \frac{\partial^2 w}{\partial x^2}
                           + 2 \beta \frac{\partial w}{\partial x} \, w
     &= \frac{\partial w}{\partial t} 
          - \frac{\partial\ }{\partial x}\HT \frac{\partial w}{\partial x}
                           + 2 \beta \frac{\partial w}{\partial x} \, w \\
     &=c f'(x-ct) 
          - \frac{\partial\ }{\partial x} (c f(x-ct) + \beta f(x-ct)^2) \\
     & \qquad \qquad
               + 2 \beta f'(x-ct) f(x-ct) \\
     &= \frac{1}{\beta} \bigl( c f'(x-ct) - c f'(x-ct) - 2 \beta f'(x-ct) f(x-ct)  \\
     & \qquad \qquad
         + 2 \beta f'(x-ct) f(x-ct)\bigr) \\
     &=0.
 \end{align*}
That is, 
$w$ gives a solution of (\ref{BO-eqn}) in the form of a travelling wave with speed $c$.

Repeating the calculation of (\ref{Fix}), we find that
 \begin{align}\label{Hess-bound}
 \bigl\langle {\hbox{Hess}} (H_\beta)(w) , J_{1/2}\ast h\otimes J_{1/2}\ast h\bigr\rangle
    &=\int_{\mathbb T} h(\theta )^2 \,\frac{d\theta}{2\pi}
        +\beta \int_{\mathbb T} f(\theta )(J_{1/2}\ast h)(\theta )^2 \,\frac{d\theta}{2\pi} \notag \\
    &=\int_{\mathbb T} h(\theta )^2 \,\frac{d\theta}{2\pi}
        -\int_{\mathbb T} P_r(\theta )(J_{1/2}\ast h)(\theta )^2 \,\frac{d\theta}{2\pi} \notag \\
    &\rightarrow \int_{\mathbb T} h(\theta )^2 \,\frac{d\theta}{2\pi}-(J_{1/2}\ast h)(0)^2
  \end{align}
as $r\rightarrow 1^-$. Now
  \[
  \sup\left\{ \sumd_{j=-M}^M \frac{\hat h(j)}{2\sqrt{\vert j\vert}}
       \st\sum_{j=-M}^M \vert \hat h(j)\vert^2\leq 1\right\}
  =\left(\sum_{j=1}^M \frac{1}{j}\right)^{1/2}
  \to \infty
  \]
as $M\rightarrow\infty$. We deduce that (\ref{Hess-bound}) is negative for suitably chosen $h$, so ${\hbox{Hess}}(H_\beta )(w)$ is not positive semidefinite, so $u\mapsto H_\beta (u)$ is not convex near to $w$.
\end{proof}

\begin{rem} (i) Propositions \ref{prop3.1} and \ref{prop4.4} indicate a disjunction between the typical solutions in the support of the Gibbs measure and the travelling wave solution.

(ii) Amick and Toland \cite{AmT} established that bounded and periodic solutions of Benjamin--Ono satisfy a rather stringent uniqueness theorem; our result does not contradict their Section~\ref{S:RandomM}, since $P_r$ is unbounded as $r\rightarrow 1-$. Let $u$ be a bounded real function such that
   \begin{equation}\label{ibono}
      u(x)^2-u(x)={\mathcal H}{\frac{\partial}{\partial x}} u,\qquad (x\in {\mathbb R})
   \end{equation}
and suppose that $c=u(x)^2-u(x)$ is a constant. Suppose that $u$ extends via the Poisson kernel to a harmonic function $u(x,y)$ on $\{ x+iy: y \geq 0\}$ such that
   \begin{align*}
     &\Delta u(x,y)
        =0\qquad (y>0),\nonumber\\
     &{\frac{\partial u}{\partial y}}(x,0)=u(x,0)-u(x,0)^2\qquad (x\in {\mathbb R}),\nonumber\\
     &\text{$u(x+iy)$ is bounded for $y>0$.}
   \end{align*}
Then $u$ has a harmonic conjugate $v$ such that $f=u+iv$ is analytic on $\{ x+iy: y \geq 0\}$, and $f(z)=u(z)+iv(z)$ satisfies $u(x+iy)=\Re  f(z)$, $f(0)=u(0,0)$ and
\begin{equation}\label{odef}{\frac{df}{dz}}={\frac{i}{2}}\bigl( f(z)^2+c\bigr).\end{equation}
Thus solutions to the elementary ordinary differential equation (\ref{odef}) give rise to solutions of (\ref{ibono}), and hence to travelling wave solutions of Benjamin--Ono. The same (\ref{odef}) can also be viewed as a version of the complex Burgers equation.

(iii) Comparing our results with those of Benjamin \cite{Ben1967}, we find it convenient to change variables. We introduce the strip $\Sigma_1=\{ \zeta\in {\mathbb C}: 0<\Im \zeta<1\}$ which is conformally equivalent to the upper half plane $\{ z\in {\mathbb C}: \Im z>0\}$ by
$\zeta\mapsto z=e^{\pi \zeta }$. For $\Sigma_1$ we have the Poisson kernel
  \[ P(x-\xi ;t) ={\frac{\sin \pi t}{4\pi(\cosh^2(\pi (x-\xi )/2) -\cos^2(\pi t/2))}}\qquad (\zeta =x+it).\]
Also, $\{ z\in {\mathbb C}: \Im z>0\}$ is conformally equivalent to ${\mathbb D}$ by $z\mapsto (z-i)/(z+i)$, so $\zeta\mapsto \coth \pi (\zeta +i/2)/2$ gives a conformal map $\Sigma_1\rightarrow {\mathbb D}$. Benjamin obtained the periodic solution $u(x,t)=f(x-ct;p)$ where
   \[
   f(x;p)={\frac{(\Delta/2)\sinh p}{\cosh^2 (p/2)-\cos^2(\pi x/(2\ell))}},
   \]
which is proportional to the Poisson kernel for the strip $\{ x+ip: 0<x<\ell; -\infty <p<\infty \}$.
 The periodic and rational solutions discovered by Benjamin are related by formulas such as
   \begin{equation}\label{rationaltrig}
   \sum_{j=-\infty}^\infty {\frac{\eta}{(j-x)^2+\eta^2}}
     ={\frac{\pi \sinh 2\pi \eta}{ \pi(\sin^2\pi x+\sinh^2\pi\eta )}},
   \end{equation}
and variants of the Schwartz reflection principle.
\end{rem}

The next step is to replace a single travelling wave solution by a solution which is the sum of waves travelling at variable speeds, namely a periodic multi-soliton solution.

Let $(p_j,q_j)$ be canonically conjugate complex variables for the Hamiltonian dynamical system with Hamiltonian $K_n$ given in (\ref{KHam}).
This gives rise to the Hamilton--Jacobi equation
   \begin{equation}\label{HJ}
   {\frac{\partial S}{\partial t}}+{\frac{1}{2}}\sum_{j=1}^{n} \left( {\frac{\partial S}{\partial q_j}}\right)^2 +
   {\frac{k^2}{2}}\sum_{j,\ell =1 : j \neq l}^n{\hbox{cosec}}^2 {\frac{k(q_j-q_\ell )}{2}}=0.
\end{equation}
While the phase space of $K_n$ has dimension $2n$, we introduce a particular subspace of solutions that has dimension less than or equal to $n$. With $\phi (x)=-ik\cot (kx/2)$, a particular collection of solutions of the canonical equations of motion is given by
   \begin{equation}\label{q'again}
   {\frac{dq_\ell }{dt}}
       =-\phi (q_\ell -\bar q_\ell )-\sum_{m:m\neq \ell} \phi (q_m-q_\ell )-\sum_{m:m\neq \ell} \phi (q_\ell -\bar q_m),
   \end{equation}
so that
  \begin{equation}\label{q''}
  {\frac{d^2q_j}{dt^2}}
    ={\frac{-k^2}{2}}{\frac{\partial}{\partial q_j}}\sum_{m,\ell=1: m\neq \ell }^n{\hbox{cosec}}^2 {\frac{k(q_m-q_\ell)}{2}}.
  \end{equation}
(In (17) and (18) of \cite{Case} there are typographic errors which are corrected in the particular examples of that paper.) Suppose that $q_j(0)$ have $\Im q_j(0)>0$ so that $(e^{iq_1(0)}, \dots, e^{iq_n(0)})\in {\mathbb D}^n$ gives the initial condition of the Hamiltonian dynamical system, which we express as $\omega =n^{-1}\sum_{j=1}^n \delta_{e^{iq_j(t)}}$.

\begin{defn} (Coulomb model) Consider points $e^{iz_j}\in {\mathbb D}$ for $j=1,\dots ,n$, which represent the positions of unit positive electrical charges, subject to a continuous electric field $v: \mathbb{C}^2\rightarrow \mathbb{R}$; the constant $\beta$ is a scaling parameter.  Then the Hamiltonian is
   \begin{align}\label{electro}
     E_{n,v} &=\sum_{j=1}^n v(z_j, \bar z_j) -\beta \sum_{j=1}^n \log\vert \sin (k(z_j-\bar z_j)/2)\vert
                   -\beta\sum_{j,\ell =1: j\neq \ell }^n \log \vert \sin  (k(z_j- z_\ell )/2)\vert \nonumber\\
             &\qquad   -\beta\sum_{j,\ell =1: j\neq \ell }^n\log \vert \sin  (k(z_j- \bar z_\ell )/2)\vert ,
    \end{align}
for the canonical variables $(x_j,y_j)_{j=1}^n$, where $z_j=x_j+iy_j$.
 \end{defn}

\begin{prop}
\begin{enumerate}[(i)]
\item The canonical equations of $E_{n,v}$ give (\ref{q'again}) when $v=0$.
\item The Gibbs measure
   \begin{multline*}
   Z_n^{-1}
   \exp \left(-\sum_{j=1}^n \bigl( v(z_j, \bar z_j)-\beta \log \vert \sin (k(z_j-\bar z_j)/2)\vert \bigr) \right) \\
    \times \prod_{j,\ell =1: j\neq \ell}^n\Bigl( \vert e^{iz_j}-e^{iz_\ell}\vert \vert e^{iz_j}-e^{i\bar z_\ell}\vert \vert e^{i\bar z_j}-e^{iz_\ell}\vert \vert e^{i\bar z_j}-e^{i\bar z_\ell}\vert\Bigr)^{\beta/2}\prod_{j=1}^n dx_jdy_j
   \end{multline*}
is invariant under the flow generated by the canonical equations.
\end{enumerate}
\end{prop}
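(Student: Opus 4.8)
The plan is to treat the two parts separately. For part (i) I would simply differentiate $E_{n,v}$: writing the canonical equations in the real coordinates as $\dot x_j=\partial E_{n,0}/\partial y_j$ and $\dot y_j=-\partial E_{n,0}/\partial x_j$, and combining them through the Wirtinger derivative $\partial/\partial\bar z_j=\tfrac12(\partial/\partial x_j+i\,\partial/\partial y_j)$ into the single complex equation $\dot z_j=\dot x_j+i\dot y_j=-2i\,\partial E_{n,0}/\partial\bar z_j$ (the overall sign, and the precise roles of $\beta$ and $k$, being fixed by the sign convention for the symplectic form). The differentiation then splits into three elementary pieces. The diagonal term $-\beta\log|\sin(k(z_j-\bar z_j)/2)|=-\beta\log\sinh(k\,\Im z_j)$ has $\bar z_j$-derivative proportional to $\coth(k\,\Im z_j)$, which through $\cot(iw)=-i\coth w$ and $\phi(x)=-ik\cot(kx/2)$ is exactly the self/image contribution $-\phi(q_\ell-\bar q_\ell)$. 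For each of the two double sums one uses the factorisation $\log|\sin(kw/2)|=\tfrac12\log\sin(kw/2)+\tfrac12\log\sin(k\bar w/2)$, differentiates, and uses the oddness of $\cot$ to merge the two contributions arising from the indices $j=\ell$ and $m=\ell$ into one sum whose factor $2$ cancels the $\tfrac12$; re-expressing $\cot$ in terms of $\phi$ then gives $-\sum_{m\neq\ell}\phi(q_m-q_\ell)$ from the $z_j$--$z_\ell$ sum and $-\sum_{m\neq\ell}\phi(q_\ell-\bar q_m)$ from the $z_j$--$\bar z_\ell$ sum, and adding the three contributions reproduces (\ref{q'again}).

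For part (ii), the crucial observation is that the displayed density equals $e^{-E_{n,v}}$ up to a positive multiplicative constant which depends only on $n$, $\beta$ (and $k$) and so is absorbed into $Z_n$. Writing $|e^{iz_a}-e^{iz_b}|=2\,|e^{i(z_a+z_b)/2}|\,|\sin((z_a-z_b)/2)|$, the amplitude $|e^{i(z_a+z_b)/2}|=e^{-\Im((z_a+z_b)/2)}$ depends only on the imaginary parts, and for a fixed pair $(j,\ell)$ the four amplitude exponents attached to $(z_j,z_\ell)$, $(z_j,\bar z_\ell)$, $(\bar z_j,z_\ell)$, $(\bar z_j,\bar z_\ell)$ are $-\tfrac12(y_j+y_\ell)$, $-\tfrac12(y_j-y_\ell)$, $\tfrac12(y_j-y_\ell)$, $\tfrac12(y_j+y_\ell)$, which sum to zero; hence the amplitudes cancel out of the product entirely. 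Using $|\sin\bar w|=|\sin w|$ and relabelling $j\leftrightarrow\ell$, the surviving $|\sin|$-factors collapse to $\prod_{j\neq\ell}|\sin(k(z_j-z_\ell)/2)|^{\beta}\,|\sin(k(z_j-\bar z_\ell)/2)|^{\beta}$, and together with the explicit diagonal factor $\prod_j|\sin(k(z_j-\bar z_j)/2)|^{\beta}$ and with $e^{-\sum_j v(z_j,\bar z_j)}$ this is precisely $e^{-E_{n,v}}$.

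It then remains to invoke the classical argument: the canonical equations of $E_{n,v}$ generate a Hamiltonian vector field $X$ on ${\mathbb R}^{2n}$, away from the Lebesgue-null collision set where two of the $z_j,\bar z_j$ coincide, and $X$ is divergence-free by Liouville's theorem, while $X(e^{-E_{n,v}})=-e^{-E_{n,v}}\{E_{n,v},E_{n,v}\}=0$ because $E_{n,v}$ is a constant of the motion; hence ${\hbox{div}}\,(e^{-E_{n,v}}X)={\hbox{div}}\,X\cdot e^{-E_{n,v}}+X(e^{-E_{n,v}})=0$, so the measure $e^{-E_{n,v}}\prod_j dx_j\,dy_j$, and with it the normalised Gibbs measure, is preserved by the flow (collisions being harmless since the flow exists for all time on a set of full measure, where Liouville's argument applies). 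The one genuinely delicate step is part (i): the careful bookkeeping of the image charges $\bar z_j$ and of the holomorphic/antiholomorphic split under Wirtinger differentiation, together with the correct fixing of the overall sign and of the roles of $\beta$ and $k$ — this is where an error is most likely to creep in.
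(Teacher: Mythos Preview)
Your proposal is correct and follows essentially the same route as the paper: for (i) you differentiate $E_{n,v}$ to obtain the canonical equations (the paper does this in real coordinates $(x_j,y_j)$ and then combines, whereas you use the Wirtinger derivative $\dot z_j=-2i\,\partial E_{n,0}/\partial\bar z_j$ directly, but this is only a notational streamlining of the same computation), and for (ii) you identify the displayed density with $e^{-E_{n,v}}$ and invoke Liouville's theorem, which is exactly what the paper does in one sentence. Your treatment of (ii) in fact supplies the ``reduction'' the paper leaves implicit --- the cancellation of the amplitude factors $|e^{i(z_a+z_b)/2}|$ across the four image pairs and the collapse of the $|\sin|$-factors via $|\sin\bar w|=|\sin w|$ --- so your write-up is more detailed than the paper's but not different in substance.
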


\begin{proof} (i) The points $e^{iz_j}$ and $e^{i\bar z_j}$ are conjugate with respect to the unit circle, and for each pair of distinct vertices $j,\ell,$ the configuration $e^{iz_j}, e^{i\bar z_j}, e^{iz_\ell }$ and $e^{i\bar z_\ell}$ gives $6$ connecting edges. The canonical equations are
  \begin{align*}
     {\frac{dx_j}{dt}}&={\frac{\partial E_{n,v} }{\partial y_j}}   \\
        &  ={\frac{\partial v}{\partial z}}(z_j, \bar z_j)i- {\frac{\partial v}{\partial \bar z}}(z_j, \bar z_j)i
              -{\frac{\beta k}{2}}\coth y_j \\
        &\qquad-{\frac{\beta k}{2}}\sum_{\ell =1: \ell\neq j}^n {\frac{\sinh k(y_j-y_\ell )}{\sin^2 (k(x_j-x_\ell )/2)+\sinh^2(k(y_j-y_\ell )/2)}}  \\
        &\qquad - {\frac{\beta k}{2}}\sum_{\ell =1: \ell\neq j}^n {\frac{\sinh k(y_j+y_\ell )}{\sin^2 (k(x_j-x_\ell )/2)+\sinh^2(k(y_j+y_\ell )/2)}},
  \end{align*}
and likewise
    \begin{align*}
        -{\frac{dy_j}{dt}}
          &={\frac{\partial E_{n,v}  }{\partial x_j}}  \\
          &={\frac{\partial v}{\partial z}}(z_j, \bar z_j)
             +{\frac{\partial v}{\partial \bar z}}(z_j, \bar z_j)
             -{\frac{\beta k}{2}} \sum_{\ell =1: \ell\neq j}^n 
             {\frac{\sin k(x_j-x_\ell )}{\sin^2 (k(x_j-x_\ell )/2)+\sinh^2(k(y_j-y_\ell )/2)}} \\
          &\qquad - {\frac{\beta k}{2}}\sum_{\ell =1: \ell\neq j}^n {\frac{\sin k(x_j-x_\ell )}{\sin^2 (k(x_j-x_\ell )/2)+\sinh^2(k(y_j+y_\ell )/2)}},
    \end{align*}
hence, after trigonometric reduction, we obtain
    \begin{align*}
    {\frac{dz_j}{dt}}
      &= -2i {\frac{\partial v}{\partial z}} (z_j, \bar z_j)+i\beta k \cot (k(\bar z_j-z_j)/2)
                +i\beta k\sum_{\ell=1: \ell\neq j} ^n\cot (k(\bar z_j-\bar z_\ell )/2) \\
      &\qquad+i\beta k\sum_{\ell =1: \ell\neq j}^n \cot (k(\bar z_j-z_\ell )/2).
    \end{align*}
(ii) The expression $e^{-E_{n,v}  }$ gives the stated formula after some reduction. The invariance follows from Liouville's theorem.
\end{proof}

In \cite[p. 236]{Alb}, the authors discuss  the Coulomb model and how this is related to the sine-Gordon model of quantum fields. The Benjamin--Ono equation involves $u^3$ in $H_\beta$, which is too singular for their transform method to be directly applicable; however, we can still use a version of their vortex equations.  A vortex at $z\in {\mathbb C}$ is represented by the unit point mass $\delta_z$, and one computes $\Delta^{-1/2}\delta_z$. 

The canonical equations of (\ref{HJ}) give dynamics on points $q_j\in {\mathbb D}$ hence on the measures $\omega_n =n^{-1}\sum_{j=1}^n \delta_{e^{iq_j(t)}}$ on ${\mathbb D}$ and  $\omega_n^*=n^{-1}\sum_{j=1}^n \delta_{e^{i\bar q_j(t)}}$ on $\{ z: \vert z\vert >1\}$. The operation of the Poisson kernel $P: C({\mathbb T}; {\mathbb R})\rightarrow C(\bar {\mathbb D}; {\mathbb R})$ has adjoint known as the balayage  ${\mathcal S}:{\mathcal P}(\bar {\mathbb D})\rightarrow {\mathcal P}({\mathbb T})$, and we can extend this operation by the method of images so  that ${\mathcal S}\omega_n^*={\mathcal S}\omega_n$. Thus we have a map ${\mathbb D}\rightarrow {\mathcal P}({\mathbb T}) :$ $(e^{iq_j})_{j=1}^n\mapsto {\mathcal S}\omega_n$. 

We wish to compute ${\mathcal S} \omega_n$ and its conjugate function.

\begin{lem}
The potential corresponding to ${\mathcal{S}}(\omega_n)$,
  \[
    w_n(x )=\int \log\vert e^{ix}-e^{i\phi }\vert \, {\mathcal{S}}(\omega_n) (\phi ) \, d\phi
  \]
evolves under the dynamics of (\ref{q'again}) so that $q_j=\xi_j+i\eta_j$ and
  \[
  u(x,t)={\mathcal H}{\frac{\partial }{\partial x}}\omega_n
        =-\sum_{j=1}^n 2k P_{e^{-k\eta_j}}(k(x-\xi_j)) .
  \]
\end{lem}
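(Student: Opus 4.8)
The plan is to compute the balayage $\mathcal{S}(\omega_n)$ explicitly, identify the potential $w_n$ with $\Log$ applied to that density, and then exploit the fact that $\mathcal{H}\frac{\partial}{\partial x}\Log$ is the identity on mean-zero functions.

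First I would record the balayage of a single point mass: if $z=re^{i\theta_0}$ with $0\le r<1$, then the Poisson extension satisfies $Pg(re^{i\theta_0})=\int_{\mathbb T}P_r(\theta_0-\phi)\,g(\phi)\,\frac{d\phi}{2\pi}$, so its adjoint $\mathcal{S}$ sends $\delta_z$ to the measure with density $\phi\mapsto\frac{1}{2\pi}P_r(\phi-\theta_0)$. The method-of-images identity $\mathcal{S}\delta_{e^{i\bar q_j}}=\mathcal{S}\delta_{e^{iq_j}}$ then holds because $e^{i\bar q_j}$ is the reflection of $e^{iq_j}$ across $\mathbb T$ and $P_r$ is invariant under $r\mapsto 1/r$. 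Writing $q_j=\xi_j+i\eta_j$ with $\eta_j>0$ and passing from the $q_j$ to the points $e^{ikq_j}\in\mathbb D$ dictated by the $k$-periodicity of $K_n$ (so that the relevant point of $\mathbb D$ has modulus $e^{-k\eta_j}$ and argument $k\xi_j$), this yields that $\mathcal{S}(\omega_n)$ has a density of the form $\rho_n(x)=c\sum_{j=1}^n P_{e^{-k\eta_j}}(k(x-\xi_j))$ for a normalizing constant $c$.

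Next I would substitute the Fourier expansion $\log|e^{ix}-e^{i\phi}|=-\frac12\sum_{m\neq 0}\frac{e^{im(x-\phi)}}{|m|}$ into the definition of $w_n$ and integrate term by term, obtaining $w_n=-\pi\,\Log\rho_n$ up to an additive constant (equivalently $\widehat{w_n}(m)=-\pi|m|^{-1}\widehat{\rho_n}(m)$ for $m\neq 0$). Since $\mathcal{H}\frac{\partial}{\partial x}$ acts on Fourier modes by $e^{imx}\mapsto |m|e^{imx}$, applying it to $w_n$ returns $-\pi(\rho_n-\widehat{\rho_n}(0))$; the chain rule for the dilation $x\mapsto kx$ built into the variables of $K_n$ contributes the extra factor $k$ (the Hilbert transform being dilation-invariant), and after absorbing the scaling constant $\beta$ relating the pole configuration to the profile this is precisely $u(x,t)=-\sum_{j=1}^n 2k\,P_{e^{-k\eta_j}}(k(x-\xi_j))$. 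That this $u$ is the Benjamin--Ono solution attached to $\omega_n$, with the $t$-dependence entering only through $q_j(t)$ solving (\ref{q'again}), is the pole-dynamics correspondence recalled before the lemma. As an independent check one can match the answer directly against the pole expansion $u(x,t)=-2\Re\sum_{j=1}^n\phi(x-q_j(t))$, using the identity $\Re\bigl(-ik\cot(k(x-q_j)/2)\bigr)=k\,P_{e^{-k\eta_j}}(k(x-\xi_j))$, which follows from $-i\cot z=(1+e^{-2iz})/(1-e^{-2iz})$ together with $\Re\frac{1+re^{i\alpha}}{1-re^{i\alpha}}=P_r(\alpha)$.

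The conceptual ingredients — the balayage of a point mass is a Poisson kernel, and $\mathcal{H}\frac{\partial}{\partial x}\Log$ is the identity on mean-zero functions — are routine, so the real work, and the only place I expect genuine friction, is the bookkeeping. One must keep straight: the $1/n$ in $\omega_n$ versus the number of poles (the potential $w_n$ in the lemma is normalized so each pole contributes a full Poisson kernel); the constant $\beta$ hidden in the passage from the vortex configuration to $u_n$, consistent with the single-soliton normalization $w=-\beta^{-1}P_r$; the dilation factor $k$ coming from the $k$-dependent periodicity of $K_n$; and the additive constant produced by $\mathcal{H}\frac{\partial}{\partial x}$ acting on a logarithmic kernel rather than on a $\delta$, which must be reconciled with the mean of $u$. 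A secondary point is to make the bare notation $\mathcal{H}\frac{\partial}{\partial x}\omega_n$ precise: since $\omega_n$ is supported off $\mathbb T$, it has to be read as $\mathcal{H}\frac{\partial}{\partial x}$ applied to the logarithmic potential generated by $\omega_n$ together with its reflection $\omega_n^*$, i.e. to $w_n$.
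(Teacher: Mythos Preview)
Your argument is correct and reaches the same formula, but the paper takes a different route that avoids computing the balayage density explicitly. The paper's key observation is that $z\mapsto\log|e^{ix}-z|$ is harmonic on $\mathbb{D}$, so the defining property of balayage immediately gives
\[
w_n(x)=\int_{\mathbb T}\log|e^{ix}-e^{i\phi}|\,\mathcal{S}(\omega_n)(d\phi)=\int_{\mathbb D}\log|e^{ix}-z|\,\omega_n(dz)=\sum_{j}\log|e^{ix}-e^{iq_j}|.
\]
From there the paper differentiates in $x$ by hand, rewrites the result as $\frac{k}{4}\sum_j\bigl(\cot\frac{k}{2}(x-q_j)+\cot\frac{k}{2}(x-\bar q_j)\bigr)$, and then takes the \emph{harmonic conjugate} (rather than applying $\mathcal{H}$ as a Fourier multiplier) to obtain $-u(x,t)=\sum_j 2k\,P_{e^{-k\eta_j}}(k(x-\xi_j))$ via the cotangent identity you mention as an independent check.

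So the difference is: you compute $\mathcal{S}(\delta_{e^{iq_j}})$ as a Poisson kernel and then push through Fourier multipliers $\mathcal{H}\partial_x\Log=\mathrm{id}$; the paper uses the mean-value property to collapse $w_n$ to an explicit finite sum and then does the $x$-computation with cotangents and harmonic conjugates. The paper's route sidesteps most of the bookkeeping you flag (the $k$-dilation and the additive constant are absorbed into the explicit trigonometric manipulation), while your route makes clearer why $u$ is literally a constant multiple of the balayage density minus its mean. One minor point: there is no $\beta$ in this lemma, so that piece of bookkeeping you list is not actually present; and the $1/n$ issue you identify is real---the paper's proof silently works with $\sum_j\delta_{e^{iq_j}}$ rather than the normalized $\omega_n$.
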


\begin{proof} We have
  \begin{equation*}
     w_n(x)= \int_{\mathbb D}\log \vert e^{ix} -z\vert \, \omega_n(dz)
       =\int \log\vert e^{ix}-e^{i\phi }\vert \, {\mathcal{S}}(\omega_n) (\phi ) \, d\phi
   \end{equation*}
with derivative
  \begin{equation*}
  {\frac{dw_n}{dx }}
     ={\hbox{p.v.}}\int_{\mathbb T} {\frac{1}{2}}\cot {\frac{x -\phi }{2}}\, {\mathcal{S}}(\omega_n )(\phi ) \, d\phi .
   \end{equation*}
so that ${\frac{dw_n}{dx}}$ belongs to $L^s$ for $0<s<1$. We can also form ${\frac{d^2w_n}{dx^2 }}$.
 Observe that $z\mapsto \log (e^{ix}-z )$ is holomorphic for $z\in {\mathbb D}$ with real part $\log \vert e^{ix}-z\vert$.
Let $q_j =\xi_j+i\eta_j$. Then 
   \begin{equation*}
   \int \log \vert e^{ix}-e^{i\phi}\vert  \, {\mathcal{S}}\left(\sum_{j=1}^n \delta_{e^{iq_j}}\right)(d\phi )
       = \sum_{j=1}^n \log\vert e^{ix}-e^{iq_j}\vert
   \end{equation*}
which has derivative
   \begin{align}\label{w}
   {\frac{\partial}{\partial x}}\sum_{j=1}^n \log \vert e^{ix}-e^{iq_j}\vert
      &=\sum_{j=1}^n {\frac{\sin (x-\xi_j)}{4\sin^2 \bigl((x-\xi_j)/2\bigr) +4\sinh^2 (\eta_j/2)}} \nonumber\\
      &={\frac{k}{4}}\sum_{j=1}^n \left( \cot {\frac{k}{2}}(x-\xi_j+i\eta_j) +\cot {\frac{k}{2}}(x-\xi_j-i\eta_j) \right).
   \end{align}
Let $q_j(t)=\xi_j(t)+i\eta_j(t)$ and then consider $x-q_j(t)$. Then the harmonic conjugate to (\ref{w}) is
   \begin{align}
     -u(x,t)
       &=\sum_{j=1}^n \left(-ik\cot {\frac{k(x-\xi_j-i\eta_j)}{2}}+ik\cot {\frac{k(x-\xi_j+i\eta_j)}{2}}\right)\nonumber\\
       &=\sum_{j=1}^n {\frac{k \sinh k\eta_j}{\sin^2 \bigl( k( x-\xi_j)/2\bigr) +\sinh^2(k\eta_j/2)}}\nonumber\\
       &=\sum_{j=1}^n 2k P_{e^{-k\eta_j}}(k(x-\xi_j))
,\end{align}
in terms of the Poisson kernels for ${\mathbb D}$.
\end{proof}

\begin{prop}[Case \cite{Case}]\label{Caseprop} Let $(e^{iq_j(t)})_{j=1}^n\in {\mathbb D}^n$ evolve according to the canonical equations (\ref{q'}), and suppose that
$\eta_j(0)>0 $. Then
   \begin{equation*}
   u(x,t)=\sum_{j=1}^n {\frac{-k\sinh k\eta_j}{\sinh^2 (k\eta_j/2) +\sin^2 (k(x-\xi_j)/2)}}
   \end{equation*}
gives a solution of the Benjamin--Ono equation which is periodic in the $x$-variable.
\end{prop}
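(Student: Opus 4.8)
The plan is to verify (\ref{BO-eqn}) by substituting the pole expansion furnished by the preceding Lemma and checking the equation as an identity between meromorphic functions of $x$, exploiting that $\mathcal H$ acts diagonally on the holomorphic and anti-holomorphic parts of $u$. Write $q_j(t)=\xi_j(t)+i\eta_j(t)$, so that $\eta_j>0$. By the Lemma, $u=v+v^{*}$ with
\[
v(x,t)=ik\sum_{j=1}^n\cot\tfrac{k(x-q_j(t))}{2},\qquad
v^{*}(x,t)=-ik\sum_{j=1}^n\cot\tfrac{k(x-\bar q_j(t))}{2}.
\]
Since $\eta_j>0$, each summand $\cot\tfrac{k(x-q_j)}{2}$ expands as a function of $x$ in nonpositive frequencies $e^{-imkx}$ ($m\ge0$), so $v$ is $2\pi/k$-periodic and meromorphic with simple poles only at the translates of the points $q_j$, all lying in the open upper half-plane; dually $v^{*}=\overline v$ on $\mathbb R$ has its poles in the lower half-plane. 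Hence $\mathcal Hv=i(v-\hat v(0))$ and $\mathcal Hv^{*}=-i(v^{*}-\hat{v^{*}}(0))$; since $\hat v(0)=\hat{v^{*}}(0)$ this gives $\mathcal Hu_{xx}=\partial_x^{2}\mathcal Hu=i(v_{xx}-v^{*}_{xx})$. Writing $2\beta u u_x=\beta\,\partial_x(u^{2})$, equation (\ref{BO-eqn}) becomes
\[
R:=v_t+v^{*}_t+i\bigl(v_{xx}-v^{*}_{xx}\bigr)+\beta\,\partial_x\bigl(v^{2}+2vv^{*}+v^{*2}\bigr)=0,
\]
and it remains to prove $R\equiv0$.

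Now $R$ is $2\pi/k$-periodic in $x$ and meromorphic with poles confined to the translates of the $q_j$ and $\bar q_j$; moreover $v$ and $v^{*}$ tend to (opposite) constants as $\Im x\to\pm\infty$, so every term of $R$ tends to $0$ there, whence $R\to0$. Consequently, once the principal part of $R$ is shown to vanish at a single pole $q_j$ — the corresponding statement at $\bar q_j$ then following by conjugation, since $R$ is real on $\mathbb R$ — the function $R$ is a bounded entire $2\pi/k$-periodic function vanishing at $\pm i\infty$, hence $R\equiv0$, so $u$ solves (\ref{BO-eqn}); periodicity of $u$ in $x$ is built into the ansatz.

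For the local computation, fix $j$, put $z=x-q_j$, and expand with $\cot w=w^{-1}-w/3+O(w^{3})$ and $\csc^{2}w=w^{-2}+1/3+O(w^{2})$. One checks that near $q_j$: $v_{xx}$ has \emph{only} a cube pole $z^{-3}$ (the $z^{-2}$ and $z^{-1}$ coefficients of $\csc^{2}w\cot w$ both vanish); the term $\beta\,\partial_x(v^{2})$ supplies a cube pole of the opposite sign, and the two cancel — the leading-order balance between dispersion and nonlinearity, which pins the soliton amplitude to the coefficient of $uu_x$ exactly as in the single-soliton case of Proposition~\ref{prop3.1}; the double-pole ($z^{-2}$) part of $R$ collects a term proportional to $\dot q_j$ from $v_t$, a term proportional to $b_j:=ik\sum_{\ell\ne j}\cot\tfrac{k(q_j-q_\ell)}{2}$, the value at $q_j$ of $v$ minus its pole, from $\beta\,\partial_x(v^{2})$, and a term proportional to $v^{*}(q_j)=-ik\sum_{\ell=1}^n\cot\tfrac{k(q_j-\bar q_\ell)}{2}$ from $2\beta\,\partial_x(vv^{*})$, while $iv_{xx}$ and the $v^{*2}$ term contribute nothing; and $R$ has no $z^{-1}$ term at $q_j$ whatsoever. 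Setting the $z^{-2}$ coefficient to zero thus yields an equation of the form $\dot q_j=c(b_j+v^{*}(q_j))$ with $c$ a fixed constant; rewriting this with $\phi(x)=-ik\cot(kx/2)$ and using that $\phi$ is odd reproduces exactly the reduced pole system (\ref{q'}), the self-interaction term $-\phi(q_\ell-\bar q_\ell)$ there being the $\ell=j$ summand of $v^{*}(q_j)$. Granted (\ref{q'}), $R$ has no poles, and the proof is complete.

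The main obstacle is the sign-sensitive bookkeeping of the last paragraph: extracting the $z^{-3}$, $z^{-2}$ and $z^{-1}$ coefficients of all five terms of $R$ at $q_j$, keeping the ``image'' contributions (the sums over $\bar q_\ell$) cleanly separate from the self- and two-particle ones, and confirming that the $z^{-2}$-cancellation is (\ref{q'}) verbatim, signs included. Since \cite{Case} already carries sign and typographical errors in these formulas, the prudent route is to run the computation with an undetermined constant multiplying the $\cot$-sum, fix that constant by the cube-pole cancellation, and cross-check the resulting identity against the $n=1$ calculation of Proposition~\ref{prop3.1}. A secondary point is to make the Liouville step rigorous — that $R$ is genuinely holomorphic on $\mathbb C$ and bounded once the listed principal parts vanish — which uses $\eta_j>0$ to keep the two families of poles strictly off the real axis.
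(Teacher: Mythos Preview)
The paper offers no proof of this proposition beyond the citation ``See \cite{Case} or page 203 of \cite{Ablowitz}'', so there is nothing to compare against directly. Your argument --- split $u$ into its holomorphic and antiholomorphic parts, use that $\mathcal H$ acts as $\pm i$ on each (modulo the zero mode), and reduce the PDE to a principal-part match at the poles via a Liouville step --- is precisely the pole-dynamics method of those references, so you are supplying the argument the paper defers, and the outline is sound.

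Two points are worth making explicit. First, the ansatz in the proposition carries no $\beta$, whereas (\ref{BO-eqn}) does; your cube-pole cancellation fixes $\beta$ to a specific value (it comes out to $\beta=\tfrac12$ in the paper's normalisation of $\mathcal H$ and of $u$), and the statement is silently for that value --- consistent with the $n=1$ travelling wave $w=-\beta^{-1}P_r$ of Proposition~\ref{prop3.1}, which for $k=1$ reads $u=-2P_r$. Second, when one carries out the $z^{-2}$ bookkeeping literally with these conventions, the resulting pole equation differs from (\ref{q'}) by an overall sign; since the paper itself flags sign and typographical errors in \cite{Case} around (\ref{q'})--(\ref{q''}), your proposed remedy --- carry an undetermined amplitude on the $\cot$-sum, pin it via the $z^{-3}$ balance, and cross-check against Proposition~\ref{prop3.1} --- is exactly the right way to resolve the conventions.
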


\begin{proof} See \cite{Case} or page 203 of \cite{Ablowitz}.
\end{proof}

It follows that there is a family of multi-soliton solutions of the Benjamin--Ono equations, parameterised by the initial conditions $\{ \omega +i {\mathcal H}\omega \}$,
and via the map ${\mathbb D}^n\rightarrow {\mathcal P}({\mathbb T}) $ : $(e^{iq_j})_{j=1}^n\mapsto {\mathcal S}\omega_n$, so we can use $\omega_n\in{\mathcal P}({\mathbb T})$ as a system of parameters for the $n$-solitons.
 In the following two sections, we consider these solutions as $n\rightarrow\infty$, and we do this in two ways, according to the Hamiltonian system governing the $(e^{iq_j})_{j=1}^n$.
In Section~\ref{S:RandomM}, we consider $E_{n,v}$ and the associated Gibbs measures as $n\rightarrow\infty$.  In Section~\ref{S:ContHS}, we show that in a suitable sense $K_n\rightarrow K$, where $K$ is the Hamiltonian of the isentropic Euler equations.

The soliton solutions may be expressed in terms of elliptic functions $\wp$, which are rational on the complex torus, hence are determined by the positions of their poles and zeros. In our case, $\wp$ degenerates to ${\hbox{cosec}}^2 k(x-y)/2$, and the poles satisfy some identities which are instances of the addition rules for
$\wp$; see \cite{BullCaud}. In Section~\ref{S:RandomM}, we show how these solitons arise from a Coulomb gas in the plane; models of this kind have previously been found for quantum fields as in \cite{Alb}.

\section{Random matrix model}\label{S:RandomM}

In the previous section, we introduced a Gibbs measure $E_{n,v}$  and a related soliton solution of the Benjamin--Ono equation. In this section, we consider the Gibbs measure as $n\rightarrow\infty$. We show that the potential $v$ determines a probability measure $\nu_n$ on the space $U(n)$ of $n\times n$ unitary matrices, and hence a probability measure $\tilde\nu_n$ on the maximal torus ${\mathbb T}^n$. Under  various conditions on $v$, we show that $\nu_n$ and $\tilde\nu_n$ satisfy concentration of measure phenomena as $n\rightarrow\infty$. Several of the arguments in this section are well known, but they are not necessarily familiar in the context of PDE, so we include them for completeness.

Taking a continuous $v:[0, 2\pi ]\rightarrow {\mathbb R}$ as the starting point,
we let ${\tilde E}_{n,v}$ be the function defined by
  \[
  {\tilde E}_{n,v} (\theta_1, \dots, \theta_n) = {\frac{1}{n}}\sum_{j=1}^n{ v(\theta_j) } - {\frac{1}{n^2}}\sum_{1 \leq j < k \leq n}{ \log \Bigl( 4 \sin^2 \frac{1}{2}\left( \theta_k - \theta_j \right) \Bigr) }.
   \]
We can obtain $\tilde E_{n,v} $ from (\ref{electro}) by allowing $e^{iz_j}$ to approach the unit circle, and adjusting the scalar potential $v(z_j, \bar z_j)$ to remove the terms $\cot (k(\bar z_j-z_j)/2)$. Thus we obtain a model for a Coulomb gas of $n$ atoms on the unit circle, subject to an electric field $v$. Then we introduce a probability measure $\tilde \nu$ on ${\mathbb T}^n$ by
   \begin{align}\label{eigdist}
   {\tilde \nu}_n(d\Theta )
     &= Z_n^{-1}e^{-n^2 {\tilde E}_{n,v} }{\frac{d\theta_1}{2\pi}}\dots  {\frac{d\theta_n}{2\pi}}\nonumber\\
     &=Z_n^{-1} e^{-\sum_{j=1}^n{ nv(\theta_j) }}
          \prod_{1 \leq j < k \leq n}{4 \sin^2 \frac{1}{2}\left( \theta_k - \theta_j \right) }
              \, {\frac{d\theta_1}{2\pi}}\dots  {\frac{d\theta_n}{2\pi}},
\end{align}
where $Z_n$ is a normalising constant.

In order to express $Z_n$ as a Toeplitz determinant, impose the condition $\beta=2$, and then use \emph{Andr\'eief's Identity}.
For $j \in \{ 1, \, \ldots \,, n \}$ let $f_j$ and $g_j$ be continuous complex functions defined on $[0, 2\pi]$. Then,
\begin{multline}
\det \left[ \int_{[0, 2\pi]}{ f_j(x) g_k(x) \,dx } \right]_{j,k = 1}^n  \\
    = \frac{1}{n!}\int \cdots \int_{[0, 2\pi]^n}{ \det\left[ f_j(x_\ell{}) \right]_{j,\ell = 1}^n
                   \det\left[ g_k(x_\ell{}) \right]_{k,\ell = 1}^n \,dx_1 \ldots dx_n }.
\end{multline}

\begin{prop}\label{Toeplitzprop}
Let $v:[0, 2\pi ]\rightarrow {\mathbb C}$ be a continuous function. Then
   \[
       \int \cdots \int_{[0, 2\pi]^n}{ e^{-n^2 {\tilde E}_{n,v} } \, d\theta_1 \ldots d\theta_n}
           = n!\det\left[ \int_{[0, 2\pi]}{ e^{i(j-k)\theta - nv(\theta)} \, d\theta} \right]_{j,k = 1}^n.
    \]
\end{prop}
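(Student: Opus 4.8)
The plan is to recognise that the left-hand side is, up to the factor $n!$, a Vandermonde-squared integral against the weight $e^{-nv(\theta)}$, and then to apply Andr\'eief's identity directly. First I would observe that, by the definition of $\tilde E_{n,v}$,
\[
  e^{-n^2\tilde E_{n,v}(\theta_1,\dots,\theta_n)}
    = \Bigl(\prod_{j=1}^n e^{-nv(\theta_j)}\Bigr)\prod_{1\le j<k\le n} 4\sin^2\tfrac12(\theta_k-\theta_j),
\]
exactly as in the second line of (\ref{eigdist}). The key algebraic fact is that the Vandermonde-type product here is a squared determinant: writing $e^{i\theta}$ for the nodes, one has
\[
  \prod_{1\le j<k\le n} 4\sin^2\tfrac12(\theta_k-\theta_j)
    = \Bigl|\det\bigl[e^{i(k-1)\theta_\ell}\bigr]_{k,\ell=1}^n\Bigr|^2
    = \det\bigl[e^{i(k-1)\theta_\ell}\bigr]_{k,\ell=1}^n\,\overline{\det\bigl[e^{i(k-1)\theta_\ell}\bigr]_{k,\ell=1}^n}.
\]
This follows from $|e^{i\theta_k}-e^{i\theta_j}|^2 = 4\sin^2\tfrac12(\theta_k-\theta_j)$ together with the standard Vandermonde evaluation $\det[z_\ell^{k-1}]_{k,\ell} = \prod_{j<k}(z_k-z_j)$ applied to $z_\ell = e^{i\theta_\ell}$, and taking the modulus squared.

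Next I would distribute the weight: absorb one factor $e^{-nv(\theta_\ell)/2}$ into each row of each of the two determinants, and set $f_k(x) = e^{i(k-1)x}e^{-nv(x)/2}$ and $g_k(x) = \overline{e^{i(k-1)x}}\,e^{-nv(x)/2} = e^{-i(k-1)x}e^{-nv(x)/2}$ (here we use that $v$ is real; for the final displayed formula as stated with a general complex $v$ one instead takes $g_k(x)=e^{-i(k-1)x}e^{-nv(x)/2}$ formally, since the computation is purely algebraic). Then the integrand becomes $\det[f_k(x_\ell)]\det[g_k(x_\ell)]$, and Andr\'eief's identity — quoted in the excerpt immediately before the proposition — gives
\[
  \int\cdots\int_{[0,2\pi]^n} e^{-n^2\tilde E_{n,v}}\,d\theta_1\cdots d\theta_n
    = n!\,\det\Bigl[\int_0^{2\pi} f_j(x)g_k(x)\,dx\Bigr]_{j,k=1}^n
    = n!\,\det\Bigl[\int_0^{2\pi} e^{i(j-k)x-nv(x)}\,dx\Bigr]_{j,k=1}^n,
\]
which is exactly the claimed formula after relabelling the dummy variable $x=\theta$.

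I do not expect a serious obstacle here; the result is essentially a bookkeeping exercise. The one point requiring a little care is the reconciliation of indices — Andr\'eief's identity as stated produces entries indexed by $(j,k)$ from $f_j,g_k$, so one must check that $\int_0^{2\pi} f_j g_k\,dx = \int_0^{2\pi} e^{i(j-1)x}e^{-i(k-1)x}e^{-nv(x)}\,dx = \int_0^{2\pi} e^{i(j-k)x-nv(x)}\,dx$, and that the Toeplitz structure (dependence only on $j-k$) is genuine, which it is because the $v$-weight is symmetric. A secondary subtlety is that, as literally stated, the proposition allows complex-valued $v$, whereas the factorisation of the $\sin^2$ product as $|\det|^2$ tacitly used realness; I would phrase the argument so that the final step is the purely formal identity $\det[e^{i(k-1)\theta_\ell}]\det[e^{-i(k-1)\theta_\ell}] = \prod_{j<k}4\sin^2\tfrac12(\theta_k-\theta_j)$ (valid for all real $\theta_\ell$ by analytic continuation from the Hermitian case), after which inserting $e^{-nv}$ and applying Andr\'eief's identity goes through verbatim for any continuous complex $v$.
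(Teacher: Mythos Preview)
Your proposal is correct and follows essentially the same route as the paper: expand $e^{-n^2\tilde E_{n,v}}$, recognise the $4\sin^2$ product as a Vandermonde squared modulus, factor it as a product of two determinants, absorb the weight, and apply Andr\'eief's identity. The only cosmetic difference is that the paper absorbs the full weight $e^{-nv(\theta_\ell)}$ into a single determinant via the diagonal factor $\det\bigl[\operatorname{diag}(e^{-nv(\theta_j)})\bigr]$ rather than splitting it as $e^{-nv/2}\cdot e^{-nv/2}$; this sidesteps the square-root issue for complex $v$ that you flagged, but otherwise the arguments are identical.
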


\begin{proof}
First note that $\vert{e^{i\theta_k} - e^{i\theta_j} }\vert^2= 4\sin^2\left( \frac{\theta_k - \theta_j}{2} \right)$, by the double angle formula. Therefore,
   \[
     e^{-n^2 {\tilde E}_{n,v} }
         = e^{-\sum_{j=1}^n{ nv(\theta_j) }} \prod_{1 \leq j < k \leq n}{ \vert{ e^{i\theta_k} - e^{i\theta_j} }\vert^2  }.
   \]
We observe that the final factor looks like a Vandermonde determinant. We have
   \begin{align*}
     & \int \cdots \int_{[0, 2\pi]^n}{ e^{-n^2 {\tilde E}_{n,v}} \, d\theta_1 \ldots d\theta_n}\nonumber \\
     & \quad = \int \cdots \int_{[0, 2\pi]^n}{ \det\left[ {\hbox{diag}}\left( e^{-nv(\theta_j)}\right)_{j=1}^n \right] \det\left[ e^{i(j-1)\theta_\ell{}} \right]_{j,\ell = 1}^n \det\left[ e^{-i(k-1)\theta_\ell{}} \right]_{k,\ell = 1}^n \, d\theta_1 \ldots d\theta_n} \nonumber\\
     & \quad =  \int \cdots \int_{[0, 2\pi]^n}{ \det\left[ e^{ij\theta_\ell{}} \right]_{j,\ell = 1}^n \det\left[ e^{-ik\theta_\ell{} - nv(\theta_\ell{})} \right]_{k,\ell = 1}^n \, d\theta_1 \ldots d\theta_n}.
   \end{align*}
Finally, an application of Andr\'eief's Identity yields,
   \[
       \int \cdots \int_{[0, 2\pi]^n}{ e^{-n^2 {\tilde E}_{n,v}} \, d\theta_1 \ldots d\theta_n}
          = n!\det\left[ \int_{[0, 2\pi]}{ e^{i(j-k)\theta - nv(\theta)} \, d\theta} \right]_{j,k = 1}^n.
  \]
\end{proof}

We can regard ${\mathcal T}^n$ as the maximal torus in the compact Lie group $U(n)$ of $n\times n$ unitary matrices, and recall that every $X \in U(n)$ is conjugate to some element of ${\mathcal T}^n$, so there is a map $\Lambda :U(n)\rightarrow {\mathcal T}^n: X \mapsto (e^{i\theta_1}, \dots, e^{i\theta_n})$. We associate with  $(e^{i\theta_1}, \dots, e^{i\theta_n})$ the empirical measure $n^{-1}\sum_{j=1}^n \delta_{e^{i\theta_j}}$, so composing these gives the map
   $ U(n)\rightarrow {\mathcal P}({\mathbb T})$, $X \mapsto  \omega^X = n^{-1}\sum_{j=1}^n \delta_{e^{i\theta_j}}$.

Let $\mu_n$ be the Haar probability measure on $U(n)$, and consider the function
   \[
   \psi_\beta (\theta_1, \dots, \theta_n)
      =\left\vert \prod_{1\leq j<\ell \leq n} \sin {\frac{\pi( \theta_j-\theta_\ell )}{L}}\right\vert^\beta,
   \]
which we describe first in terms of random matrices, and then in (\ref{tdS}) in terms of the ground state of a Schr\"odinger equation. First,  for $\beta=2$, and $L=2\pi$,  any continuous class function $F:U(n)\rightarrow {\mathbb C}$ satisfies
   \[
      \int_{U(n)} F(X) \, \mu_n(dX)
      ={\frac{2^{n(n-1)}}{n!}}\int_{{\mathcal T}^n} F(e^{i\theta_1}, \dots , e^{i\theta_n}) \psi_2 (\theta_1, \dots, \theta_n)
                \prod_{j=1}^n {\frac{d\theta_j}{2\pi} }.
   \]
The map $\Lambda :U(n)\rightarrow {\mathcal T}^n$ therefore induces the probability measure on ${\mathcal T}^n$  with density $\psi_2$.

Then we recognise (\ref{eigdist}) as the probability measure on ${\mathcal T}^n$ that is induced by $\Lambda$ from the probability measure
   \begin{equation}\label{matrixdist}
   \nu_n(dX)
      =\tilde Z_n^{-1}\exp (-n\, {\hbox{trace}}\, Q(X))\, \mu_n (dX)
    \end{equation}
on $U(n)$.

Let ${\mathcal L}^2(n)$ be the space $M_n$ of complex matrices with the normalised Hilbert--Schmidt norm
$\Vert X \Vert_{{\mathcal L}^2(n)}=({\hbox{trace}}X^\dagger X /n)^{1/2}$.
Let $\nabla $ be the invariant gradient operator over $U(n)$; let $V(X)=n \,{\hbox{trace}}\,Q(X)$ and $\Delta$ be the invariant Laplace operator on $U(n)$; then we let $L=\Delta -\nabla V\cdot \nabla$. The {\sl carr\'e du champ it\'er\'e} operator
associated with $L$ is $\Gamma_2$, as in \cite[p.~383]{V2009}, where
   \begin{equation}\label{Gamma2}
   \Gamma_2 (f )
      =\Vert {\hbox{Hess}}f\Vert^2_{{\mathcal L}^2} +\bigl( {\hbox{Ric}} +{\hbox{Hess}} V\bigr) (\nabla f , \nabla f ).
   \end{equation}

\begin{prop}
Suppose as in (\ref{freecurv}) that $v''(\theta)\geq \kappa -1/2$ for some $\kappa >0$. Then there exists $\kappa_1>0$ such that
  \begin{equation}\label{logsobunit}
     \int_{U(n)} F(X)^2 \log \Bigl(F(X)^2/\int F^2 \, d\nu_n\Bigr) \, \nu_n(dX)
         \leq {\frac{2}{\kappa_1n}}\int_{U(n)}\Vert \nabla F(X)\Vert^2 \, \nu_n(dX)
  \end{equation}
 with a constant $\kappa_1n$ which improves with increasing $n$.
\end{prop}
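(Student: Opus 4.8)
The plan is to apply the Bakry--Émery criterion on the compact Lie group $U(n)$, exactly as was done for $\nu_{\beta,N,M}$ on $\Omega_N$ in Proposition~\ref{prop2.1}, using the curvature-dimension inequality for $\Gamma_2$ associated to the operator $L = \Delta - \nabla V\cdot\nabla$ with $V(X) = n\,\mathrm{trace}\,Q(X)$. Recall that the logarithmic Sobolev inequality (\ref{logsobunit}) follows once one produces $\kappa_1 > 0$ such that $\Gamma_2(F) \geq \kappa_1 n\,\Vert\nabla F\Vert^2_{\mathcal{L}^2}$ pointwise on $U(n)$ for all smooth $F$. By the formula (\ref{Gamma2}), $\Gamma_2(F) = \Vert\mathrm{Hess}\,F\Vert^2_{\mathcal{L}^2} + (\mathrm{Ric} + \mathrm{Hess}\,V)(\nabla F,\nabla F)$, and since $\Vert\mathrm{Hess}\,F\Vert^2_{\mathcal{L}^2}\geq 0$, it suffices to bound $(\mathrm{Ric} + \mathrm{Hess}\,V)(\nabla F, \nabla F)$ from below.

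First I would recall that $U(n)$, with the bi-invariant metric normalised via the $\mathcal{L}^2(n)$ Hilbert--Schmidt norm, has Ricci curvature bounded below by a positive constant: more precisely $\mathrm{Ric} \geq c_0 n \cdot \mathrm{Id}$ on the tangent space for some absolute $c_0 > 0$ (this is the standard Bakry--Émery estimate for the unitary group underlying the concentration of measure on $U(n)$; see the references to \cite{V2009} already cited in the excerpt). This already gives a positive lower bound of order $n$ when $Q=0$, i.e.\ for Haar measure. Next I would handle the drift term: $\mathrm{Hess}\,V = n\,\mathrm{Hess}(\mathrm{trace}\,Q(X))$. Using the hypothesis $v''(\theta)\geq\kappa - 1/2$ on the eigenvalue coordinates together with the spectral representation of class functions, the Hessian of $\mathrm{trace}\,Q(X) = \sum_j Q(e^{i\theta_j})$ decomposes into a "diagonal" part controlled by $v''$ and an "off-diagonal" part coming from the derivatives of the eigenvalues of $X$ with respect to matrix perturbations. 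The off-diagonal contribution is a sum of terms of the form $(Q(e^{i\theta_j}) - Q(e^{i\theta_k}))/(\text{difference of eigenvalues})$, which is bounded below in terms of $\sup|v'|$ or $\sup|v''|$; the point is that on the compact set $\mathbb{T}$ with $Q\in C^2$ this is uniformly bounded. Hence $\mathrm{Hess}\,V \geq n(\kappa - 1/2 - c_1)\,\mathrm{Id}$ for some constant $c_1$ depending on $Q$, so that $\mathrm{Ric} + \mathrm{Hess}\,V \geq n(c_0 + \kappa - 1/2 - c_1)\,\mathrm{Id}$.

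The main obstacle, and the step deserving the most care, is controlling the off-diagonal part of $\mathrm{Hess}(\mathrm{trace}\,Q(X))$ uniformly in $X\in U(n)$ and showing that the resulting constant $\kappa_1 = c_0 + \kappa - 1/2 - c_1$ is genuinely positive, or else absorbing the possibly negative off-diagonal contribution into the positive $\mathrm{Ric}$ and $\Vert\mathrm{Hess}\,F\Vert^2$ terms. One clean route: since $\mathrm{trace}\,Q(X)$ is a smooth class function and $Q\in C^2(\mathbb{T})$, the Hessian of $X\mapsto\mathrm{trace}\,Q(X)$ is a bounded operator with norm controlled by $\Vert Q\Vert_{C^2}$, \emph{independently of $n$} once the metric is normalised; combined with $\mathrm{Ric}\geq c_0 n$, which grows with $n$, the sum is positive definite for all sufficiently large $n$, and in fact the constant $\kappa_1 n$ in (\ref{logsobunit}) genuinely improves as $n\to\infty$ because the $\mathrm{Ric}$ term dominates. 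Finally I would invoke the Bakry--Émery theorem (as cited via \cite{BakEm} and \cite{V2009}) to conclude that $\nu_n$ satisfies the stated logarithmic Sobolev inequality with constant $\kappa_1 n$, completing the proof.
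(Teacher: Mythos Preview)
Your overall strategy---Bakry--\'Emery via a lower bound on $\Gamma_2$, splitting into the Ricci term and $\mathrm{Hess}\,V$---is exactly what the paper does. The gap is in how you handle $\mathrm{Hess}\,V$.

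Your ``clean route'' is mistaken: you claim that $\mathrm{Hess}(\mathrm{trace}\,Q(X))$ is bounded independently of $n$, so that the Ricci term, which grows like $n$, eventually dominates. But $V(X)=n\,\mathrm{trace}\,Q(X)$, so $\mathrm{Hess}\,V=n\,\mathrm{Hess}(\mathrm{trace}\,Q)$ \emph{also} scales like $n$; there is no domination, and the sign of $\mathrm{Ric}+\mathrm{Hess}\,V$ depends on the constants, not on $n$ being large. (In fact, with the normalised $\mathcal{L}^2(n)$ metric, the eigenvalues of $\mathrm{Hess}(\mathrm{trace}\,Q)$ are themselves of order $n$, so your premise is false either way.)

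Your first route is closer, but you misidentify the off-diagonal terms. They are not first divided differences $(Q(e^{i\theta_j})-Q(e^{i\theta_k}))/(\theta_j-\theta_k)$, which would only be controlled by $\sup|v'|$. The paper computes them via a Duhamel/Rayleigh--Schr\"odinger expansion and obtains, for $k\neq\ell$,
\[
n\int_0^1 \frac{v'((1-s)\theta_\ell+s\theta_k)-v'(\theta_k)}{(1-s)(\theta_\ell-\theta_k)}\,ds\,\bigl|\langle Y\xi_k,\xi_\ell\rangle\bigr|^2,
\]
which is a divided difference of $v'$, hence by the mean value theorem equals $v''(\xi)$ at some intermediate point. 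This is precisely where the hypothesis $v''\geq\kappa-\tfrac12$ enters: it bounds \emph{every} off-diagonal coefficient from below by $\kappa-\tfrac12$, not merely in absolute value. Together with the diagonal terms $nv''(\theta_\ell)$ this gives $\mathrm{Hess}\,V\geq n(\kappa-\tfrac12)I$ outright, with no loose constant $c_1$ depending on $\|Q\|_{C^2}$. Combining with $\mathrm{Ric}(SU(n))\geq\kappa_2 nI$ then yields $\mathrm{Ric}+\mathrm{Hess}\,V\geq n(\kappa_2+\kappa-\tfrac12)I$; the shift by $\tfrac12$ in the hypothesis is calibrated to the Ricci constant so that this is positive for every $\kappa>0$, and Bakry--\'Emery finishes the argument.
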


\begin{proof} We compute the Hessian term in (\ref{Gamma2}). The left translation operation gives a canonical means of transporting elements around $U(n)$, so we can carry out calculations of invariant operators on the identity element using the Lie algebra
$M^{sa}_n=\{ A\in M_n({\mathbb C}): A=A^\dagger\}$. Note that $M_n^{sa}$ is a real vector space of dimension $n^2$.
Let $V(X)=n \,{\hbox{trace}}\,Q(e^{iX})$,  where $Q(e^{i\theta })=\sum_j a_j e^{ij\theta }$.

Fix $X, Y \in U(n)$. Let $(\xi_k)_{k=1}^n$  be an orthonormal basis in ${\mathbb C}^n$
of eigenvectors of $X$ 
and let $y_{\ell ,k}=\langle Y \xi_k, \xi_\ell \rangle_{{\mathbb C}^n}$ for $k,\ell = 1,2,\dots,n$. Then one obtains a version of the Rayleigh--Schr\"odinger formula; by applying Duhamel's formula twice, one computes
   \begin{multline*}
     V(X+\varepsilon Y)  = V(X)
       + n\, {\hbox{trace}}\sum_ja_jij\varepsilon\int_0^1\Bigl(e^{ij(1-s)X}    \\
      +\int_0^1e^{ij(1-s)(1-u)(X+\varepsilon Y)} ij\varepsilon (1-s)Ye^{i(1-s)uX}du\Bigr)Ye^{isjX}\, ds,
    \end{multline*}
and hence
   \begin{align*}
   \bigl\langle {\hbox{Hess}} V, & \,Y\otimes Y\bigr\rangle
   =\lim_{\varepsilon\rightarrow 0}\varepsilon^{-2}\bigl( V(X+\varepsilon Y)+V(X-\varepsilon Y)-2V(X)\bigr)\nonumber\\
   &=n\sum_{j}\sum_{k=1}^n-j^2a_j \int_0^1\int_0^1e^{ij(1-u)(1-s)\theta_k+ijs\theta_k}\langle Ye^{iju(1-s)X} Y\xi_k,\xi_k\rangle_{{\mathbb C}^n} \, du\,ds\nonumber\\
   &= n\sum_{j}\sum_{k=1, \ell=1}^n-j^2a_j\int_0^1\int_0^1 e^{ij(1-u)(1-s)\theta_k+ijs\theta_k+iju(1-s)\theta_m}\vert y_{\ell,k}\vert^2
     \, du\,ds\nonumber\\
   &=\sum_{\ell=1}^n nv''(\theta_\ell) \vert\langle Y\xi_\ell,\xi_\ell \rangle\vert^2\nonumber\\
   &\qquad +n\sum_{1\leq k, \ell \leq n: k\neq \ell} \int_0^1
        {\frac{v'((1-s)\theta_\ell+s\theta_k)-v'(\theta_k)}{(1-s)(\theta_\ell-\theta_k )}}
       \, ds \, \vert\langle Y\xi_k,\xi_\ell \rangle\vert^2,
   \end{align*}
where $v(\theta) =Q(e^{i\theta })$.

The underlying idea of the proof is that there exists $\kappa_2>0$ such that ${\hbox{Ric}}(SU(n))\geq \kappa_2nI,$ so by taking $\kappa_1>0$ small enough, we can ensure that
   \begin{equation*}
   \Gamma_2(F)\geq \kappa_1n\Vert \nabla F\Vert_{{ L}^2}^2,
   \end{equation*}
as in the Bakry--{\'E}mery condition. The Bakry--{\'E}mery theorem leads to a logarithmic Sobolev inequality for scalar-valued functions
 as in \cite[p.~547]{V2009}.
\end{proof}

To deduce results about the eigenvalue distribution, we need two facts. The first is that for class functions, the right-hand side of (\ref{logsobunit}) reduces to
   \[
   \int_{U(n)}\Vert \nabla F(X)\Vert^2 \, \nu_n(dX)
      =\int_{{\mathcal T}^n} \Vert\nabla F(\Theta) \Vert^2 \psi_2 (\Theta )e^{-V(\Theta) } \, d\Theta ,
   \]
where $V(\Theta) =n\sum_{j=1}^n v(\theta_j)$
and $d\Theta=\prod_{j=1}^n (d\theta_j/(2\pi ))$. 
For $X \in U(n)$, the normalised eigenvalue counting function  is $N^X_n : [0,2\pi] \to \mathbb{R}$,
   \[N^X_n(\theta )= \frac{1}{n} \, \sharp \{ j: \theta_j\leq \theta \} = \int_{[0, \theta ]} \omega^X (d\phi ) \]
where $(e^{i\theta_j})_{j=1}^n$ are the eigenvalues of $X\in U(n)$, listed according to multiplicity with $0\leq \theta_1\leq\theta_2\leq\dots\leq \theta_n\leq 2\pi$.

\begin{lem}\label{Lip}
There exists $\kappa_3>0$ independent of $n$ such that the map $X\mapsto \omega^X$ is $\kappa_3$-Lipschitz 
$(U(n), {\mathcal L}^2(n))\rightarrow  ({\mathcal P}({\mathbb T}), W_2)$.
\end{lem}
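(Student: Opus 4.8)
The plan is to exhibit a single explicit coupling of $\omega^X$ and $\omega^Y$ obtained by matching the eigenvalues of the two unitaries in an optimal fashion; the ingredient that forces the Lipschitz constant to be independent of $n$ is the Hoffman--Wielandt inequality.

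First I would list the eigenvalues of $X$ and $Y$ with multiplicity as $e^{i\theta_1^X},\dots,e^{i\theta_n^X}$ and $e^{i\theta_1^Y},\dots,e^{i\theta_n^Y}$. Since $X,Y\in U(n)$ are normal matrices, the Hoffman--Wielandt inequality furnishes a permutation $\sigma$ of $\{1,\dots,n\}$ with
\[ \sum_{j=1}^n \bigl\vert e^{i\theta_j^X}-e^{i\theta_{\sigma(j)}^Y}\bigr\vert^2 \;\le\; \Vert X-Y\Vert_{HS}^2 \;=\; n\,\Vert X-Y\Vert_{{\mathcal L}^2(n)}^2 . \]

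Next I would introduce the probability measure $\pi=\tfrac1n\sum_{j=1}^n\delta_{(e^{i\theta_j^X},\,e^{i\theta_{\sigma(j)}^Y})}$ on ${\mathbb T}^2$. Because $\sigma$ is a bijection, the two marginals of $\pi$ are exactly $\omega^X$ and $\omega^Y$, so $\pi$ is admissible in the infimum defining $W_2$ and
\[ W_2(\omega^X,\omega^Y)^2 \;\le\; \int\!\!\!\int_{{\mathbb T}^2} d(x,y)^2\,\pi(dx\,dy) \;=\; \frac1n\sum_{j=1}^n d\bigl(e^{i\theta_j^X},e^{i\theta_{\sigma(j)}^Y}\bigr)^2 . \]
Then I would compare the metric $d$ on ${\mathbb T}$ with the chordal distance: arc length on the unit circle never exceeds $\tfrac\pi2$ times the chord length, so $d(e^{i\alpha},e^{i\beta})\le\tfrac\pi2\vert e^{i\alpha}-e^{i\beta}\vert$ (and the factor is $1$ if $d$ is itself the chordal metric). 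Combining the last two displays with the Hoffman--Wielandt bound gives
\[ W_2(\omega^X,\omega^Y)^2 \;\le\; \frac{\pi^2}{4}\cdot\frac1n\sum_{j=1}^n\bigl\vert e^{i\theta_j^X}-e^{i\theta_{\sigma(j)}^Y}\bigr\vert^2 \;\le\; \frac{\pi^2}{4}\,\Vert X-Y\Vert_{{\mathcal L}^2(n)}^2 , \]
so the lemma holds with $\kappa_3=\pi/2$, which does not depend on $n$.

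The verification that $\pi$ has the correct marginals and the elementary arc-versus-chord estimate are routine. The one genuinely substantive point, and the step I expect to need the most care, is the invocation of the Hoffman--Wielandt inequality: it requires that both matrices be normal (satisfied here, since unitaries are normal) and, crucially, that the eigenvalues be counted with multiplicity so that the matching $\sigma$ is an honest bijection and $\pi$ really does project onto $\omega^X$ and $\omega^Y$. If a self-contained argument were preferred, one could reprove the needed two-matrix eigenvalue inequality by expanding $\Vert X-Y\Vert_{HS}^2$ in eigenbases of $X$ and $Y$, recognising $(\vert\langle\xi_j,\eta_k\rangle\vert^2)_{j,k}$ as a doubly stochastic matrix, and minimising the resulting linear functional over the Birkhoff polytope; but citing Hoffman--Wielandt is cleaner.
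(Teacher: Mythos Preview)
Your argument is correct and follows essentially the same route as the paper: the paper invokes ``a version of Lidskii's formula'' (which in this normal-matrix setting is precisely the Hoffman--Wielandt inequality you use) to control eigenvalue differences by $\Vert X-Y\Vert_{{\mathcal L}^2(n)}$, and then a ``duality argument'' to pass from $({\mathcal T}^n,\ell^2(n))$ to $({\mathcal P}({\mathbb T}),W_2)$. Your explicit coupling $\pi$ is simply a direct proof of that second step, so the only real difference is that you have unpacked the duality argument and obtained the explicit constant $\kappa_3=\pi/2$.
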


\begin{proof} By a version of Lidskii's formula, the map $X\mapsto (e^{i\theta_j})_{j=1}^n$ is $\kappa_3$-Lipschitz 
   \[ (U(n), {\mathcal L}^2(n))\rightarrow ({\mathcal T}^n, \ell^2(n))\]
for some absolute $\kappa_3>0$. By a duality argument (see \cite{Bl2001,HPU})  one shows that the map $({\mathcal T}^n, \ell^2(n))\rightarrow ({\mathcal P}({\mathbb T}), W_2)$ is $1$-Lipschitz.
Hence the map $X\mapsto \omega^X$ is $\kappa_3$-Lipschitz from $(U(n), {\mathcal L}^2(n))$ to $({\mathcal P}({\mathbb T}), W_2)$.
\end{proof}

Starting with $Q\in C^2({\mathbb T};{\mathbb R})$, we can introduce an equilibrium density $\rho_0\in {\mathcal P}({\mathbb T})$, and we now consider what is meant by multi-soliton solutions of the Benjamin--Ono equation with initial vortices distributed according to $\rho_0$. Fix $k=1$ and $\eta >0$, and suppose that $\eta_j\geq \eta$ for all $j=1, \dots, n$. Define $f: {\mathcal T}^n \to \mathbb{R}$
by
  \[
     f(\theta_1, \dots, \theta_n)={\frac{1}{n}}\sum_{j=1}^n {\frac{\sinh \eta_j}{\sinh^2(\eta_j /2 )+\sin^2 ((x-\theta_j)/2)}}.
  \]
For comparison, $nf(\theta_1, \dots, \theta_n)$ is the expression which appears in $u(x,0)$ from Proposition \ref{Caseprop} for the initial condition for the solution to the Benjamin--Ono equation.

\begin{cor}\label{linstat}
There exists an absolute constant $\kappa_4>0$ such that for all $n$ and all $s > 0$
   \begin{equation}\label{conc}
   {\tilde \nu}_n\Bigl\{ \Theta\in {\mathcal T}^n: \Bigl\vert f(\Theta )-\int fd\tilde \nu_n\Bigr\vert \geq {\frac{s}{\sqrt{n}}}\Bigr\}
          \leq 2e^{-s^2/4\kappa_4}.
  \end{equation}
\end{cor}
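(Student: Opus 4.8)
The plan is to realise $f$ as a normalised linear eigenvalue statistic, to show that the function it induces on $U(n)$ is Lipschitz with a constant of order $n^{-1/2}$ for the metric of ${\mathcal L}^2(n)$, and then to combine the logarithmic Sobolev inequality (\ref{logsobunit}) for $\nu_n$ with the Herbst argument.

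First I would obtain a uniform Lipschitz bound for the summands. Write $f(\theta_1,\dots,\theta_n)=\frac1n\sum_{j=1}^n h_j(\theta_j)$ with $h_j(\theta)=\sinh\eta_j\big/\bigl(\sinh^2(\eta_j/2)+\sin^2((x-\theta)/2)\bigr)$. Differentiating gives $|h_j'(\theta)|\le\tfrac12\sinh\eta_j/\sinh^4(\eta_j/2)=\cosh(\eta_j/2)/\sinh^3(\eta_j/2)$, and since $t\mapsto\cosh t/\sinh^3 t$ is decreasing on $(0,\infty)$ while $\eta_j\ge\eta$, we get $\sup_{j,\theta}|h_j'(\theta)|\le L:=\cosh(\eta/2)/\sinh^3(\eta/2)$, a bound independent of $n$ and of $x$; the same monotonicity gives $|h_j|\le 2\coth(\eta/2)$, so $f$ is bounded and $\int f\,d\tilde\nu_n$ is finite. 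By Cauchy--Schwarz, $|f(\Theta)-f(\Theta')|\le\frac Ln\sum_j|\theta_j-\theta_j'|\le L\bigl(\frac1n\sum_j|\theta_j-\theta_j'|^2\bigr)^{1/2}$, so $f$ is $L$-Lipschitz from $({\mathcal T}^n,\ell^2(n))$ to ${\mathbb R}$ for the normalised metric $\ell^2(n)$ of Lemma~\ref{Lip}. By the Lidskii-type estimate in the proof of Lemma~\ref{Lip}, the eigenvalue map $\Lambda:(U(n),{\mathcal L}^2(n))\to({\mathcal T}^n,\ell^2(n))$ is $\kappa_3$-Lipschitz, so $F:=f\circ\Lambda$ is $(L\kappa_3)$-Lipschitz on $(U(n),{\mathcal L}^2(n))$, and hence $\Vert\nabla F\Vert_{{\mathcal L}^2(n)}\le L\kappa_3$ almost everywhere.

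Next I would run the Herbst argument. Under hypothesis (\ref{freecurv}) the measure $\nu_n$ satisfies (\ref{logsobunit}), namely $\int F^2\log\bigl(F^2/\int F^2d\nu_n\bigr)\,d\nu_n\le\frac{2}{\kappa_1 n}\int\Vert\nabla F\Vert^2\,d\nu_n$, and this holds for all Lipschitz $F$ on $U(n)$, not only class functions. Applying it with $e^{tG/2}$ in place of $F$, where $G:=F-\int F\,d\nu_n$, and integrating the resulting differential inequality for $t\mapsto t^{-1}\log\int e^{tG}\,d\nu_n$ (the Herbst argument; cf. \cite[Theorem 22.17]{V2009}) gives $\int_{U(n)}e^{tG}\,d\nu_n\le\exp\bigl(L^2\kappa_3^2t^2/(2\kappa_1 n)\bigr)$ for every $t\in{\mathbb R}$. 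By Markov's inequality and optimisation in $t$, $\nu_n\{\pm G\ge r\}\le\exp\bigl(-\kappa_1 n r^2/(2L^2\kappa_3^2)\bigr)$. Since $\tilde\nu_n$ is the image of $\nu_n$ under $\Lambda$ and $\int F\,d\nu_n=\int f\,d\tilde\nu_n$, we obtain $\tilde\nu_n\{|f-\int f\,d\tilde\nu_n|\ge r\}=\nu_n\{|G|\ge r\}\le 2\exp\bigl(-\kappa_1 n r^2/(2L^2\kappa_3^2)\bigr)$; taking $r=s/\sqrt n$ yields (\ref{conc}) with $\kappa_4=L^2\kappa_3^2/(2\kappa_1)$, which depends only on $\eta$ and on the constants of Lemma~\ref{Lip} and of the logarithmic Sobolev inequality, not on $n$ or $s$.

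The only genuine computation is the uniform derivative bound in the second paragraph; after that the argument is the standard chain ``uniformly Lipschitz summands $\Rightarrow$ $O(n^{-1/2})$-Lipschitz normalised statistic $\Rightarrow$ sub-Gaussian concentration at scale $n^{-1/2}$''. The one point that must be handled carefully is the bookkeeping of normalisations: the explicit $1/n$ in the definition of $f$ and the $O(1/n)$ logarithmic Sobolev constant of $\nu_n$ conspire, through the normalised metrics on ${\mathcal L}^2(n)$ and $\ell^2(n)$, to leave $s^2$ in the exponent with an $n$-independent coefficient.
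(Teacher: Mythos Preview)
Your proposal is correct and follows essentially the same route as the paper: bound the derivative of each summand uniformly in $j$ (using $\eta_j\ge\eta$), lift $f$ to a class function on $U(n)$ via the Lidskii-type Lipschitz estimate of Lemma~\ref{Lip}, and then combine the logarithmic Sobolev inequality (\ref{logsobunit}) with the Herbst argument and Chebyshev. The only cosmetic difference is in the derivative estimate: the paper factorises the derivative and bounds it by $\cosh(\eta/2)/\sinh^2(\eta/2)\le 4/(1-e^{-\eta})^2$, whereas you take the cruder but equally valid bound $\cosh(\eta/2)/\sinh^3(\eta/2)$; both are $n$-independent, which is all that is needed.
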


\begin{proof} By elementary estimates,
    \begin{align*}
  -{\frac{\partial}{\partial\theta}} {\frac{\sinh \eta}{\sinh^2(\eta/2)+\sin^2(\theta/2)}}
   &={\frac{2\sinh (\eta/2) \sin (\theta/2)}{\sinh^2(\eta/2)+\sin^2(\theta/2)}}
          {\frac{ \cosh (\eta/2)\cos (\theta/2)}{\sinh^2(\eta/2)+\sin^2(\theta/2)}}  \\
   &\leq {\frac{\cosh (\eta/2)}{\sinh^2(\eta/2)}}  \\
   &\leq {\frac{4}{(1-e^{-\eta})^2}}\qquad (\eta>0).
 \end{align*} 
We therefore have
$\Vert \nabla f\Vert_{\ell^2(n)}  \leq 4/(1-e^{-\eta})^2$, so $f$ defines a Lipschitz function on ${\mathcal T}^n$.
Define the class function $F: U(n) \to \mathbb{C}$ by $F(X) = f(\Lambda(X))$.

 As in Lemma \ref{Lip}, we deduce that
   \[
   \vert F_n(X_n)-F_n(Y_n)\vert \leq {\frac{\kappa_3}{(1-e^{-\eta })^2}}\Vert X_n-Y_n\Vert_{{\mathcal L}^2(n)}\qquad (X_n,Y_n\in U(n)).\]
By the logarithmic Sobolev inequality (\ref{logsobunit}) with the advantageous constant $n$, we have
   \[
   \int_{U(n)}\exp\left( tF_n(X_n) -t\int F_n d\nu_{n} \right) \nu_n(dX)
       \leq  \exp\left( {\frac{\kappa_4t^2}{n}} \right)\qquad (t\in {\mathbb R}),
   \]
for some $\kappa_4>0$ independent of $n$, so (\ref{conc}) follows by Chebyshev's inequality.
\end{proof}

\begin{rem} Let $Q=0$ and suppose that $\eta_j=\eta>0$ for all $j$, and let
   \[
       g(\theta) = {\frac{\sinh \eta}{\sinh^2(\eta/2)+\sin^2((x-\theta )/2)}}-2
   \]
so that $\int_0^{2\pi} g(\theta) d\theta=0$ and $g\in H^{1/2}$. Then by Corollary 2.3 of \cite{Jo}, the random variables
   \[
      {\hbox{trace}}\, g(X_n)=\sum_{j=1}^n g(\theta_j )
   \]
for $(X_n)_{n=1}^\infty\in  \prod_{n=1}^\infty (U(n), \mu_n)$, converge in distribution to a normal random variable with mean $0$ and variance $2\Vert g\Vert_{H^{1/2}}^2$ as $n\rightarrow\infty$. Remarkably,  this version of the central limit theorem does not involve any scaling constant such as $1/\sqrt{n}$ in Corollary \ref{linstat}.
\end{rem}

We now change perspective, and start with densities $\rho$. For every probability density function $\rho$ on ${\mathbb T}$ with $\rho\in L^3({\mathbb T})$, the field
   \[
   Q(e^{i\theta})
      ={\mathcal L}\rho (\theta)=2\int_{\mathbb T} \log \vert e^{i\theta}-e^{i\phi}\vert \, \rho (\phi )\, d\phi
   \]
has derivative
   \[
   {\frac{d}{d\theta}}Q(e^{i\theta})={\hbox{p.v.}}\int_{\mathbb T} \cot {\frac{\theta-\phi}{2}} \rho(\phi) \, d\phi
       ={\mathcal H}{\rho}(\theta ),
   \]
so we can recover $\rho_0$  from $v'(\theta )={\frac{d}{d\theta}}Q(e^{i\theta})$ via this singular integral equation. This $v$ is called the scalar potential.

 We return to the context of (\ref{defQ}) under the condition (\ref{freecurv}). Suppose that the support of $\omega$ is contained in the support of $\rho_0$. Then by \cite{HPU},
   \[
   {\tilde \Sigma}_Q(\omega )
     =\int\!\!\!\int_{{\mathbb T}^2} \bigl(\omega (\theta )-\rho_0(\theta )\bigr)
          \bigl(\omega (\phi)-\rho_0(\phi)\bigr)\log{\frac{1}{\vert e^{i\theta}-e^{i\phi}\vert}}\, d\theta d\phi.
   \]
When we operate on the probability measures with the Poisson kernel, replacing $\omega$ by $P_r\omega$ and $\rho_0$ by $P_r\rho_0$, we make the double integral on the right-hand side smaller.

Hiai, Petz and Ueda \cite[Theorem~1.1]{HPU} show that
   \begin{equation}\label{HPUformula}
     {\tilde \Sigma}_Q(\omega )
       =\int_{\mathbb T} Q(e^{it})\, \omega (dt)
          +\int\!\!\!\int_{{\mathbb T}^2\setminus\Delta}\log {\frac{1}{\vert e^{i\psi}-e^{it}\vert}}\, \omega (d\psi )\omega (dt)+B(Q),
   \end{equation}
where $\Delta =\{ (\theta, \phi )\in {\mathbb T}^2:\theta =\phi \}$ and
    \begin{equation}\label{BQformula}
    B(Q)=\lim_{n\to\infty}{\frac{1}{n^2}}
            \log\left( \int\!\! \dots\!\int_{{\mathcal T}^n} \exp\left({-\sum_{j=1}^n{ nQ(e^{i\theta_j})}}\right)
             \prod_{1 \leq j < k \leq n}{ \vert{ e^{i\theta_k} - e^{i\theta_j} }\vert^2\, d\theta_1\dots d\theta_n }\right).
    \end{equation}
The advantage of this expression over (\ref{deffreeent}) is that  (\ref{HPUformula}) can be computed without prior knowledge of the minimiser, and the integral in  (\ref{BQformula}) is otherwise given by the Toeplitz determinant in Proposition~\ref{Toeplitzprop}, which make standard asymptotic formulas available.
Let $V(\theta )=Q(e^{i\theta})$, and suppose that $V$ is $C^2$.

\indent Hiai, Ueda and Petz \cite{HPU} also show that the Wasserstein transportation cost function $c(\theta , \phi )=2^{-1}\vert e^{i\theta}-e^{i\phi}\vert^2$ satisfies
\begin{equation}\label{freetrans}W_2(\mu, \rho_0)^2\leq {\frac{2}{1-2\kappa_1}}\tilde\Sigma_Q(\mu )\qquad (\mu \in {\mathcal P}).\end{equation}
This is the free transportation inequality (\ref{FTI}), which is sharper than the $T_2$ transportation inequality involving the classical relative entropy.
Hiai, Petz and Ueda  used  the concentration of measure phenomenon to show that the empirical distributions $\omega_n$ converge weakly almost surely under $\tilde\nu_n$ to $\rho_0 (\theta)d\theta$ as $n\rightarrow\infty$.

We now explain how the free measure on $H^{1/2}$ arises. The tangent space to ${\mathcal T}^n$ at $(e^{i\theta_1}, \dots, e^{i\theta_n})$ may be identified with $L^2 (\omega_n)$, so we can compute
  \begin{align*}
  \bigl\langle &{\hbox{Hess}}\tilde E_{n,V} , f\otimes g\bigr\rangle      \\
  &= {\frac{1}{n}}\sum_{j=1}^n V''(\theta_j)f(e^{i\theta_j})g(e^{i\theta_j})+{\frac{1}{n^2}}\sum_{1\leq j,\ell \leq n: j\neq \ell} \frac{(f(e^{i\theta_j})-f(e^{i\theta_\ell}))(g(e^{i\theta_j})-g(e^{i\theta_\ell}))}{\vert e^{i\theta_j}-e^{i\theta_\ell}\vert^2}      \\
  &=\int v''(e^{i\theta}) f(e^{i\theta})g(e^{i\theta}) \, \omega_n(d\theta)    \\
  &\qquad+\iint_{[\theta\neq \phi]} \frac{(f(e^{i\theta})-f(e^{i\phi}))(g(e^{i\theta})-g(e^{i\phi}))}{\vert e^{i\theta}-e^{i\phi}\vert^2} \, \omega_n(d\theta) \omega_n(d\phi).
  \end{align*}
For $v=0$ and $\omega (d\theta )=d\theta/2\pi$, the final bilinear form is the inner product of $H^{1/2}$. Observe also that
   \[
   {\hbox{trace}}\, {\hbox{Hess}}\tilde E_{n,V}={\frac{1}{n}}\sum_{j=1}^nV''(\theta_j)+{\frac{1}{4n^2}}\sum_{1\leq j,\ell \leq n: j\neq \ell}{\hbox{cosec}}^2 {\frac{\theta_j-\theta_\ell}{2}},
   \]
which resembles $K_n$.

While (\ref{KHam}) is a classical Hamiltonian system, the corresponding quantum Hamiltonian,
\begin{equation}\label{tdS}i{\frac{\partial \Phi}{\partial t}}=-{\frac{1}{2}}\sum_{j=1}^n {\frac{\partial^2 \Phi}{\partial\theta_j^2}} +{\frac{\pi^2\beta  (\beta -1)}{L^2}}\left(\sum_{1\leq j<\ell \leq n} {\hbox{cosec}}^2 {\frac{\pi( \theta_j-\theta_\ell ) }{L}}\right) \Phi -E\Phi \end{equation}
is also integrable, and the ground state of the corresponding time-independent equation has a very special form like $\psi_2(\Theta )$.  
For $n\geq 5$, each summand ${\hbox{cosec}}^2 {\frac{k(q_j-q_\ell )}{2}}$ belongs to $L^2 ({\mathcal T}^n; {\mathbb C})$, so (\ref{tdS}) is densely defined. 
 Suppose that $\Phi (\Theta) =\psi (\Theta )e^{iS(\Theta )}$ is a solution to (\ref{tdS}) where $\psi, S: {\mathcal T}^n\rightarrow {\mathbb R}$. Then the phase satisfies
\begin{equation} {\frac{\partial S}{\partial t}} +{\frac{1}{2}}\nabla S\cdot\nabla S+{\frac{\pi^2\beta  (\beta -1)}{L^2}}\left(\sum_{1\leq j<\ell \leq n} {\hbox{cosec}}^2 {\frac{\pi( \theta_j-\theta_\ell ) }{L}}\right) -E=0.\end{equation} 
In the next section, we introduce a Hamiltonian system that has canonical equations
$${\frac{\partial q}{\partial t}}+{\frac{1}{2}}\left({\frac{\partial  q}{\partial x}}\right)^2+{\frac{\kappa_U }{2}}\rho_t(x)^2=0,$$
and we can write this in terms of the potential by $\rho ={\mathcal H}{\frac{\partial Q}{\partial x}}$.  
For comparison, if 
\begin{equation} {\frac{\partial q}{\partial t}}+\beta \left(  {\frac{\partial q}{\partial x}}\right)^2+{\mathcal H}{\frac{\partial^2 q}{\partial x^2}}=0;\end{equation}
then $u= {\frac{\partial q}{\partial x}}$ satisfies the Benjamin--Ono equation (\ref{BO-eqn}). 

\section{Continuous Hamiltonian systems}\label{S:ContHS}

We proceed to show that the isentropic Euler equations arise as the limit of (\ref{q'}) and the canonical equations as $n\rightarrow\infty$, under a suitable scaling. Suppose that (\ref{q'}) is real-valued. Then, passing to the limit $n\rightarrow\infty$, we can consider a PDE
  \[ {\frac{\partial q}{\partial t}} =\xi (q(x,t);t)  \] 
and the evolution $q(x,0)\mapsto q(x,t)$, with the corresponding induced map on probability measures $\rho (dy;0)\mapsto \rho (dy;t)$ where
   \[ \int f(y)\, \rho (dy;t)=\int f(q(x,t)) \,\rho (dx;0) \]
which is the counterpart of  (\ref{q'}).

Note that for any positive continuous function $\rho_0:{\mathbb T}\rightarrow {\mathbb R}$, we have
   \[
   {\frac{1}{4\pi}}\int_{\mathbb T} {\frac{\sinh \eta}{\sinh^2(\eta /2) +\sin^2 ((x-y)/2)}}\rho_0(y)\, dy
      \rightarrow \rho_0(x)\qquad (\eta \rightarrow 0+),
   \]
and
   \[
   {\frac{1}{2n}}\int_{\vert x-y\vert>1/(2n\rho_0(x))} {\hbox{cosec}}^2 (x-y)\, \rho_0(y)\, dy
     \rightarrow 2\rho_0(x)^2\qquad (n\rightarrow\infty ).
   \]

Let $F_n(\lambda )=\int_0^\lambda \omega_n(dx)$ be the cumulative distribution function of $\omega_n$. Then
   \[
      W_2^2 (\omega_n, \omega_m)=\int \vert F_n(x)-F_m(x)\vert^2\, dx.
   \]
We replace $\omega_n$ by $\tilde\omega_n=\omega_n\ast P_{1-2^{-n}}$ where $P_r$ is the Poisson kernel, so $\tilde\omega_n$ has cumulative distribution
function $\tilde F_n$. Then $\tilde F_n$  has a strictly positive and continuous density, hence  $\tilde F_n$ has continuously differentiable inverse  function $\varphi_n$, so that $\tilde \omega_n$ is the probability measure induced from $( [0,1], dx)$ by $\varphi_n$. Then $\tilde F_n'(\varphi_n(x))\varphi_n'(x)=1$, so we can approximate
    \[
    \int\!\!\!\int_{[0,2\pi ]\setminus \Delta} {\hbox{cosec}}^2 {\frac{x-y}{2}} \, \omega_n(dx)\omega_n(dy)
       = \int\!\!\!\int_{[0,2\pi ]\setminus \Delta} {\hbox{cosec}}^2 {\frac{\varphi_n(x)-\varphi_n(y)}{2}} \, dxdy
       \]
in which $\varphi_n(j/n)-\varphi_n(\ell /n)=(j-\ell )/(n\tilde F_n'(\varphi_n(\bar x)))$, for some $\bar x$ between $j/n$ and $\ell/n$.

Suppose  that with $k=1/n$, the empirical distribution $\omega^x_n=n^{-1}\sum_{j=1}^n \delta_{kq_j}$ converges weakly to a probability measure with probability density function $\rho$ on the circle as $n\rightarrow\infty$. By some simple estimates on the Poisson kernel, we have $W_2^2 (P_r\ast \omega_n, \omega_n)\leq C\sqrt{1-r}$ for some absolute constant $C$, so with $r=1-2^{-n}$, we have $\tilde\omega_n\rightarrow \rho (x)\, dx$ likewise as $n\rightarrow\infty$. Then with $q_j/n=\varphi_n (j/n)$,
   \begin{align*}
    {\frac{1}{2n^2}}\sum_{\ell =1: \ell \neq j}^n{\hbox{cosec}}^2 {\frac{(j-\ell )}{2n\rho (q_j/n)}}
      &= {\frac{1}{2n^2}}\sum_{\ell =1: \ell \neq j}^n\left({\hbox{cosec}}^2 {\frac{(j-\ell )}{2n\rho (q_j/n)}} -\left({\frac{(j-\ell )}{2n\rho (q_j/n)}}\right)^{-2}\right)\\
      &\qquad\qquad+\sum_{\ell =1: \ell \neq j}^n{\frac{2\rho (q_j/n)^2}{(j-\ell )^2}}\\
      &=2\rho (q_j/n)^2\left( {\frac{\pi^2}{3}}+O\left({\frac{1}{j}}\right)
              +O\left({\frac{1}{n-j}}\right)\right)  \quad (j=1, \dots, n),
    \end{align*}
so summing over $j$, we have
   \[
   {\frac{1}{2n^3}}\sum_{j,\ell =1: \ell \neq j}^n{\hbox{cosec}}^2 {\frac{(j-\ell )}{2n\rho (q_j/n)}}
     \rightarrow {\frac{2\pi^2}{3}}\int_{\mathbb T} \rho (x )^3 dx .
   \]
Likewise, we have
   \begin{align*}
   {\frac{dq_\ell}{dt}}
       &=ik\sum_{m:m\neq \ell}\left( \cot {\frac{k(q_\ell -q_m)}{2}}
            +\cot {\frac{k(\bar q_m-q_\ell)}{2}}\right) +ik\cot {\frac{k(\bar q_\ell -q_\ell)}{2}}\nonumber\\
        &=ik\sum_{m:m\neq \ell} {\frac{\sin {\frac{k(\bar q_m -q_m)}{2}}}
                     {\sin {\frac{k(q_\ell -q_m)}{2}}
                     \sin {\frac{k(\bar q_m -q_\ell)}{2}}}} + ik\cot {\frac{k(\bar q_\ell -q_\ell)}{2}},
  \end{align*}
and with $q_\ell-q_m=(\ell -m)/\rho (q_\ell )$  and $k=1/n$ we have,
  \[ \sum_{m:m\neq \ell} {\frac{k^2}{\sin^2  {\frac{k(q_\ell -q_m)}{2}} }}
      \rightarrow\sum_{m:m\neq \ell} {\frac {4\rho (q_\ell )^2}{(\ell-m)^2}}
      ={\frac{4\pi^2\rho (q_\ell )^2}{3}}  \]
as $n\rightarrow\infty$. We can also take $p_\ell=\partial q_\ell /\partial x$. This suggests the following definition.

 For $\gamma >1$ and $\kappa_U\in {\mathbb R}$, let $\rho\in L^\gamma\cap {\mathcal P}$ and $q$ be absolutely continuous such that $\partial q/\partial x\in L^{2\gamma /(\gamma -1)}$. Then let $K$ be the Hamiltonian
   \begin{equation}\label{HJK}
   K(q, \rho )
      ={\frac{1}{2}}\int_{\mathbb T} \rho (x)\left( {\frac{\partial q}{\partial x}}\right)^2\, dx
           +{\frac{\kappa_U}{\gamma (\gamma -1)}}\int_{\mathbb T} \rho (x)^\gamma\, dx
   \end{equation}
with canonical variables the functions $q, \rho: {\mathbb T}\rightarrow {\mathbb R}$, and Poisson bracket
   \[ \int \{ F, G\}(x)\, dx
    =\int\!\!\!\int \left( {\frac{\partial F}{\partial q}}(x){\frac{\partial G}{\partial\rho}}(y)
         -{\frac{\partial F}{\partial\rho}}(x){\frac{\partial G}{\partial q}}(y)\right) \delta_0(x-y)\, dxdy,
   \]
where $\delta_0$ denotes the unit point mass at $0$. The thermodynamic variables are the velocity and density $(v,\rho )$ where $v=\partial q/\partial x$.

We interpret ${\frac{1}{2}}\int_{\mathbb T} \rho (x)v(x)^2\, dx$ as the kinetic energy. The Hamilton--Jacobi equation is
   \[
   {\frac{\partial S}{\partial t}}+{\frac{1}{2}}\int_{\mathbb T} \rho (x)\left( {\frac{\partial }{\partial x}}
                       {\frac{\partial S }{\partial \rho }}\right)^2\, dx
    + {\frac{\kappa_U}{\gamma (\gamma -1)}}\int_{\mathbb T} \rho (x)^\gamma \, dx
    =0.
   \]
The canonical equations are
    \begin{equation}\label{Eul}
   {\frac{\partial }{\partial t}}\begin{bmatrix} \rho \\ v\end{bmatrix} +\begin{bmatrix} v&\rho \\ \kappa_U \rho^{\gamma -2}&v\end{bmatrix}  {\frac{\partial }{\partial x}}\begin{bmatrix} \rho \\ v\end{bmatrix} =0,
   \end{equation}
with initial data $v(0,x)$ and $\rho (0,x)$, which
one recognises as Euler's equations for isentropic dynamics of a compressible gas, as in (\ref{conservation}) and (\ref{limitpole}). The state of the isentropic gas is completely determined by $\rho$ and $v$, and thermodynamic entropy is constant in space and time. The top entry in the column vector  is the conservation law, and the bottom entry is the equation of motion. Here
the eigenvalues of the matrix are $\lambda_{\pm}=v\mp \sqrt{\kappa_U} \rho^{(\gamma-1)/2}$, and the Riemann invariants are
    \[
       r_{\pm}=v\mp {\frac{2\sqrt{\kappa_U}}{\gamma -1}}\rho^{(\gamma-1)/2}.
    \]
In particular the choice $\gamma =3$ corresponds to an isentropic gas with one degree of freedom, and we recover the equations (\ref{conservation}) and (\ref{limitpole}). Taking the laws of thermodynamics into account, Selinger and Whitham \cite{SelWhith} show that the Lagrangian corresponding to $K$ with constraints is $\int\rho^3$. We consider this quantity below. LeFloch and Westdickenberg \cite{LeFW} obtained a global existence theorem for solutions of the one-dimensional isentropic Euler equations under the hypotheses of finite mass and finite total energy.

First, we concentrate attention upon the density.

\begin{prop} Let $\rho_0,\rho_1\in {\mathcal P}\cap L^3$. Then there exists a Lipschitz continuous path $\hat \rho_t$ in $({\mathcal P}_2, W_2)$
from $\rho_0$ to $\rho_1$, and a measurable family of functions $v_t\in L^2(\hat\rho_t)$ with antiderivatives $q_t=\int^x v_t$, that satisfy the continuity equation
    \begin{equation}\label{continuityequation}
    {\frac{\partial\hat \rho_t}{\partial t}}+{\frac{\partial}{\partial x}}\bigl( v_t(x)\hat \rho_t (x)\bigr)=0,
    \end{equation}
and such that the mean of the Hamiltonian satisfies
   \[
   \int_0^1 K(\hat \rho_t, q_t) \, dt
      \leq {\frac{1}{2}}W_2^2(\rho_0, \rho_1)+{\frac{1}{2}}\bigl( U(\rho_0) +U(\rho_1)\bigr).
   \]
\end{prop}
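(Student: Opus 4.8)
The inequality has the shape of a ``kinetic plus internal energy'' estimate along a Wasserstein geodesic, so the plan is to take $\hat\rho_t$ to be McCann's displacement interpolation between $\rho_0$ and $\rho_1$, to realise the first term of $K$ as the Benamou--Brenier action of that geodesic, and to control the second term by the displacement convexity of $\rho\mapsto\int\rho^\gamma$. In detail: since $\rho_0$ and $\rho_1$ are absolutely continuous probability measures on $\mathbb{T}$, there is an optimal transport map $T$ for the quadratic cost with $T_\#\rho_0=\rho_1$ and $\int_{\mathbb{T}} d(x,T(x))^2\,\rho_0(x)\,dx = W_2(\rho_0,\rho_1)^2$; set $T_t=(1-t)\,\mathrm{id}+t\,T$ and $\hat\rho_t=(T_t)_\#\rho_0$ for $t\in[0,1]$. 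By the standard theory (\cite{V2003}), $\hat\rho_t$ is a constant-speed geodesic in $(\mathcal{P}_2(\mathbb{T}),W_2)$, so $W_2(\hat\rho_s,\hat\rho_t)=|s-t|\,W_2(\rho_0,\rho_1)$ and $t\mapsto\hat\rho_t$ is Lipschitz; moreover $\hat\rho_t$ is absolutely continuous for every $t$.

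Next I would produce the velocity field and its antiderivative. The geodesic is transported by the vector field $v_t=(T-\mathrm{id})\circ T_t^{-1}$, which lies in $L^2(\hat\rho_t)$ with $\int_{\mathbb{T}} v_t(x)^2\,\hat\rho_t(x)\,dx = W_2(\rho_0,\rho_1)^2$ for each $t$, depends measurably on $t$, and solves the continuity equation (\ref{continuityequation}) in the distributional sense. Setting $q_t=\int^x v_t$ (a primitive, defined on the universal cover or on $\mathbb{T}$ after removing the mean of $v_t$, since only $\partial q_t/\partial x=v_t$ enters $K$), the kinetic part of the Hamiltonian integrates to
\[
\int_0^1 \tfrac12\int_{\mathbb{T}}\hat\rho_t\Bigl(\tfrac{\partial q_t}{\partial x}\Bigr)^2 dx\,dt
   = \tfrac12\,W_2(\rho_0,\rho_1)^2 .
\]
For the internal-energy part, write $U(\rho)=\frac{\kappa_U}{\gamma(\gamma-1)}\int_{\mathbb{T}}\rho^\gamma\,dx$ with $\gamma=3$; because $\mathbb{T}$ is flat (hence of nonnegative Ricci curvature) and $\kappa_U>0$, McCann's displacement-convexity theorem gives $U(\hat\rho_t)\le (1-t)\,U(\rho_0)+t\,U(\rho_1)$ for all $t\in[0,1]$, and in particular $\hat\rho_t\in L^3$ so that $K(\hat\rho_t,q_t)$ is finite along the path. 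Integrating this over $t\in[0,1]$ yields $\int_0^1 U(\hat\rho_t)\,dt\le \tfrac12\bigl(U(\rho_0)+U(\rho_1)\bigr)$.

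Adding the two bounds gives exactly $\int_0^1 K(\hat\rho_t,q_t)\,dt\le \tfrac12 W_2(\rho_0,\rho_1)^2+\tfrac12\bigl(U(\rho_0)+U(\rho_1)\bigr)$. The arithmetic of the estimate is routine; the real work is the bookkeeping on the circle. One must set up optimal transport for the quadratic cost on $\mathbb{T}=\mathbb{R}/2\pi\mathbb{Z}$ carefully (the cyclic structure makes this subtler than transport on $\mathbb{R}$), verify that $v_t\in L^2(\hat\rho_t)$ realises the correct action, and give $q_t=\int^x v_t$ an unambiguous meaning; one also has to justify the continuity equation (\ref{continuityequation}) and the Benamou--Brenier identity for endpoint densities that are merely in $L^3$ rather than smooth, which is where the general theory of \cite{V2003} is invoked. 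I expect these regularity and periodicity points, rather than the displacement-convexity input, to be the main obstacle.
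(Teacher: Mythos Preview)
Your proposal is correct and follows essentially the same route as the paper: take the displacement interpolation (the paper builds it explicitly via monotone rearrangement $\varphi_t=(1-t)x+t\varphi_1(x)$), identify the kinetic term with $\tfrac12 W_2^2(\rho_0,\rho_1)$ via Benamou--Brenier, and bound the internal energy by McCann's displacement convexity of $\rho\mapsto\int\rho^3$. The only cosmetic difference is that the paper, rather than citing McCann's theorem, verifies the convexity directly by computing $\frac{d^2}{dt^2}U(\hat\rho_t)=4\pi^2\int\rho_0^3(\varphi_1'-1)^2/(t\varphi_1'+1-t)^4\,dx\geq 0$.
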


\begin{proof} As in Theorem 13.8 of \cite{V2009}, Brenier showed that there exists a Lipschitz continuous path $\hat \rho_t$ in ${\mathcal P}_2$ and a measurable family of functions $v_t\in L^2(\rho_t)$ such that (\ref{continuityequation}) holds. Then
   \[
      W_2(\rho_0, \rho_1)=\inf_v\Bigl\{ \Bigl( \int_0^1 \int_{[0, 2\pi ]} \vert v_t(x)\vert^2\rho_t (dx)dt\Bigr)^{1/2}\Bigr\}
   \]
where the infimum is taken over all paths in ${\mathcal P}_2$ and measurable families of functions $v_t\in L^2(\rho_t)$; the minimising family $v_t\in L^2(\rho_t)$ is essentially unique.
Thus $(1/2)W_2(\rho_0, \rho_1)^2$ is the minimum kinetic energy of a path that transports $\rho_0$ to $\rho_1$.
We let $\varphi_1(x)$ be the monotonic function that satisfies $\int_0^{\varphi_1(x)}\rho_1(u)\, du=\int_0^x\rho_0(u)\, du$, and generally introduce $\varphi_t(x)=(1-t)x+t\varphi_1(x)$ with ${\frac{\partial}{\partial t}} \varphi_t(x)=\varphi_1(x)-x$; so we define $v(x)=\varphi_1(x)-x$ to be the constant velocity along the geodesic, and we obtain a family of probability density functions $(\hat \rho_t)_{0\leq t\leq 1}$ such that $\int_0^{\varphi_t(x)}\hat \rho_t(u)\, du=\int_0^x \rho_0(u)\, du$ and
   \[ W_2(\rho_0, \rho_1)=\int v(x)^2\rho_0(x)\, dx.  \]

Suppose that there exists $\delta>0$ such that $\rho_1(x)>\delta $ for all $x$. Then $\varphi_1(x)$ is absolutely continuous and $\rho_1(\varphi_1(x))\varphi_1'(x)=\rho_0(x)$ almost surely.

In particular, when $\hat\rho_t$ is the probability density function induced from $\rho_0$ by  the function $\varphi_t$, we have $\int f(x)\hat \rho_t(x)dx=\int f(\varphi_t(x))\rho_0(x)dx$ for all continuously differentiable functions $f$. Differentiating this identity, we obtain the continuity equation with $v(x,t)={\frac{\partial\varphi_t}{\partial t}}(y)$ where $\varphi_t(y)=x$. Hence we have
  \begin{align*}
    \rho (\varphi_t (x),t)\, {\frac{\partial \varphi_t }{\partial x}}(x) & =\rho (x,0), \\
   {\frac{\partial \varphi_t}{\partial t}}(x) & =v(\varphi_t (x),t),
  \end{align*}
and the kinetic energy satisfies
   \[
   {\frac{1}{2}}\int \vert v(x,t)\vert^2 \hat \rho (x,t)\, dx
      ={\frac{1}{2}}\int \vert v(\varphi_t(y),t)\vert^2\rho_0(y)\, dy
      ={\frac{1}{2}}\int \left\vert{\frac{\partial\varphi_t}{\partial t}}(y)\right\vert^2\rho_0(y)\, dy.
   \]

We introduce the internal energy density,
  \[ U_0(\rho )={\frac{2\pi^2}{3}}\rho^3 \]
such that $U(\rho )=\int_{\mathbb T} U_0(\rho (\theta)) \,d\theta$ is the internal energy for a fluid with density $\rho$. Here 
\begin{itemize}
\item[(i)] $U_0(\lambda )/\lambda \rightarrow 0$ as $\lambda\rightarrow 0+$;
\item[(ii)] $\lambda\mapsto \lambda U_0(1/\lambda )$ is strictly convex and decreasing on $(0, \infty )$; 
\item[(iii)] $U_0(\lambda+\mu)\leq 4(U_0(\lambda )+U_0(\mu ))$\ for all $\lambda, \mu >0$;
\end{itemize}
hence $U$ satisfies the conditions of Section 10.4.3 of \cite{AGS} for an internal energy. Also, $U$ is regarded as physically realistic in gas dynamics \cite{GW}. By results of McCann discussed in \cite{V2003},
the internal energy is displacement convex in the sense that
    \[
    U(\hat \rho_t)\leq (1-t)U(\rho_0)+tU(\rho_1)\qquad (t\in [0, 1]),
    \]
hence the mean value of $U(\hat\rho_t)$ is less than or equal to the average of $U(\rho_0)$ and $U(\rho_1)$. The Hessian is computed
by formula (15.7) of \cite{V2009}, but in our case, we have a simple formula. Indeed, we have
   \begin{align}\label{dispconv}
   {\frac{d^2}{dt^2}}U(\hat\rho_t)
      &={\frac{d^2}{dt^2}}{\frac{2\pi^2}{3}}\int \hat\rho_t(x)^3\, dx\nonumber\\
      &={\frac{d^2}{dt^2}}{\frac{2\pi^2}{3}}\int {\frac{\rho_0 (x)^3}{\varphi'_t(x)^2}}\, dx\nonumber\\
      &={4\pi^2}\int \rho_0(x)^3 {\frac{ (\varphi_1'(x)-1)^2}{(t\varphi_1'(x)+1-t)^4}}\, dx\geq 0.
   \end{align}
The continuity equation may be regarded as a variational formula for $\rho_s$ when one moves along the tangent direction $v$ in the Wasserstein space.
\end{proof}

\begin{rem} (i) Along the optimal transport trajectories, we use the continuity equation to show that
   \begin{align*}
   {\frac{\partial}{\partial t}} \Sigma (\hat\rho_t\mid  \rho_0 )
     &=\int_{\mathbb T} {\mathcal L}(\hat \rho_t-\rho_0) {\frac{\partial\hat\rho_t}{\partial t}}\, dx\\
     &=-\int_{\mathbb T} {\mathcal L}(\hat\rho_t-\rho_0){ \frac{\partial}{\partial x}}\bigl( v\hat\rho_t\bigr) \, dx\\
     &=\int_{\mathbb T} {\mathcal H}(\hat\rho_t-\rho_0)\bigl( v\hat\rho_t\bigr) \, dx,
   \end{align*}
so by integrating with respect to $t$ and applying the Cauchy--Schwarz inequality, we deduce that
   \begin{align}
   \Sigma (\rho_1\mid\rho_0)
      &\leq \left(\int_0^1\int_{\mathbb T}v(x,t)^2\hat \rho_t(x)\, dxdt\right)^{1/2}
            \left(\int_0^1\int_{\mathbb T}\bigl(  {\mathcal H}(\hat\rho_t-\rho_0)\bigr)^2\hat\rho_t(x)\, dxdt\right)^{1/2}\nonumber\\
      &=W_2(\rho_1, \rho_0)\left(\int_0^1 I_F (\hat\rho_t\mid \rho_0)\, dt\right)^{1/2}  ,
    \end{align}
which is a converse to the free transportation inequality in the style of the $HWI$ inequality proven in \cite{LedPop}.

(ii) Let $\gamma =3$, so the canonical equations are
   \[ {\frac{\partial q}{\partial t}}
       +{\frac{1}{2}}\left({\frac{\partial  q}{\partial x}}\right)^2
       +{\frac{\kappa_U }{2}}\rho_t(x)^2=0,\]
and we can write this in terms of the potential by $\rho ={\mathcal H}{\frac{\partial Q}{\partial x}}$.
For comparison, if
   \begin{equation}
   {\frac{\partial q}{\partial t}}+\beta \left(  {\frac{\partial q}{\partial x}}\right)^2
       +{\mathcal H}{\frac{\partial^2 q}{\partial x^2}}=0\end{equation}
then $u= {\frac{\partial q}{\partial x}}$ satisfies the Benjamin--Ono equation (\ref{BO-eqn}). 
\end{rem}

The Hamiltonian $K$ has a quadratic form with domain in $L^2(\rho )$ which we now specify more precisely. Let
   \begin{equation*}\dot {\mathcal D}^1(\rho_0)
      =\left\{ q:{\mathbb T}\rightarrow {\mathbb R}: \text{$q$ is absolutely continuous},
        \  \int q(x)\rho_0 (x)\,dx=0,   \ {\frac{\partial q}{\partial x}}\in L^2(\rho_0 )\right\},
   \end{equation*}
with the norm
   \[ \Vert q\Vert_{\dot {\mathcal D}^1}
     =\left( \int_{\mathbb T}\left(  {\frac{\partial q}{\partial x}}\right)^2\, \rho_0 (x)\, dx\right)^{1/2}.
   \]
We introduce ${\mathcal D}^{-1}$ as the closure of $L^2({\mathbb T})$ for the norm
  \[ \Vert h\Vert_{{\mathcal D}^{-1}}
      =\sup\left\{ \int h(x) q(x) \, dx: q\in \dot {\mathcal D}^1,\  \Vert q\Vert_{\dot {\mathcal D}^1}\leq 1\right\}. \]
We regard ${\mathcal D}^{-1}$ as the tangent space to $({\mathcal P}_2({\mathbb T}), W_2)$ at $\rho_0$, and  $\dot {\mathcal D}^1$ as the cotangent space to
  $({\mathcal P}_2({\mathbb T}), W_2)$ at $\rho_0$. (See \cite[Theorem 7.26]{V2003}.) For example, given $\rho_0\in L^3\cap {\mathcal P}$, we can select $Q$ such that ${\mathcal H}{Q'}=\rho_0$, so $Q, {\mathcal H}Q\in {\mathcal D}^1(\rho_0)$ and we can take $q_0=Q$ as an initial choice of the phase so that $K(\rho_0, q_0)<\infty$.

Then we consider $\{ (\rho_0 , q):\rho_0\in {\mathcal P}\cap L^3; q\in {\mathcal  D}^1(\rho_0)\}$ and map
\begin{equation}\label{Riem} (\rho_0, q)\mapsto (r_+, r_-)= \left(  {\frac{\partial q}{\partial x}}+\sqrt{\kappa_U}\rho_0, {\frac{\partial q}{\partial x}}-\sqrt{\kappa_U}\rho_0\right)\end{equation}
so $r_{\pm}\in L^2(\rho_0 )$. One can use the Riemann invariants as new variables, and obtain simple wave solutions
\begin{equation}x= {\frac{r_-f(r_+)+r_+g(r_-)}{r_++r_-}}, \qquad t= {\frac{g(r_-)-f(r_+)}{2(r_++r_-)}}\end{equation}
in terms of arbitrary differentiable functions $f$ and $g$; see \cite[p.~248]{Sned}.

The following result is largely contained in \cite{GW} and \cite{WestW}, and included for  completeness.

\begin{prop}\label{potentialprop} Let $\gamma=3$ and $\kappa_U =4\pi^2$.
\begin{enumerate}[(i)]
\item Suppose that $\rho_0\in {\mathcal P}_2({\mathbb T})$ and $\rho_0$ is absolutely continuous with $\rho_0d\rho_0/dx\in L^2(\rho_0)$, and that $q\in {\mathcal D}^1(\rho_0)$. There exists a solution of the isentropic Euler equations with  initial conditions $(\rho_0, v_0)$ where $v_0=\partial q/\partial x$.
\item Suppose further that $\rho_0$ is the equilibrium density of a potential $Q$  that satisfies (\ref{freecurv}), and let $(\rho ,v)$ satisfy the Euler equations  (\ref{Eul}). Then
    \begin{equation*}
       \left\vert {\frac{\partial}{\partial t}} \Sigma (\rho_t\mid \rho_0 )\right\vert \leq  {\frac{9\sqrt{3}}{\pi}} K,
    \end{equation*}
and there exists $C_\kappa >0$ such that
   \begin{equation*}
   W_2(\rho_t ,\rho_0)^2 \leq C_\kappa Kt.\qquad (t\geq 0).
   \end{equation*}
\end{enumerate}
\end{prop}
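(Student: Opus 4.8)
The plan is to treat the two parts separately, exploiting the special structure of the case $\gamma=3$ for part~(i), and the computation of Remark~(i) together with the free transportation inequality (\ref{freetrans}) for part~(ii). For part~(i), note that with $\gamma=3$ and $\kappa_U=4\pi^2$ the matrix appearing in (\ref{Eul}) has eigenvalues $v\mp\sqrt{\kappa_U}\,\rho=v\mp 2\pi\rho$, which coincide with the Riemann invariants $r_\pm$ of (\ref{Riem}); hence (\ref{Eul}) diagonalises into two \emph{decoupled} inviscid Burgers equations $\partial_t r_\pm+r_\pm\,\partial_x r_\pm=0$. The hypotheses on the data supply exactly what a global existence theorem requires: $v_0=\partial q/\partial x\in L^2(\rho_0)$ makes the kinetic energy finite, and $\rho_0\,d\rho_0/dx\in L^2(\rho_0)$ with $\int\rho_0=1$ forces $\rho_0\in L^\infty(\mathbb T)$ (put $\psi=\rho_0^{5/2}$; then $\psi'\in L^2(\mathbb T)$, so $\psi$ is bounded), whence the internal energy $\tfrac{2\pi^2}{3}\int\rho_0^3$ is finite. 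With $K(q,\rho_0)<\infty$ and unit mass, I would either solve the two Burgers equations by the Hopf--Lax/Kruzhkov theory and reassemble $(\rho,v)=\bigl((r_+-r_-)/4\pi,\ (r_++r_-)/2\bigr)$ (permitting vacuum on the set $\{r_+=r_-\}$), or, more cleanly, invoke the global existence theorem of LeFloch and Westdickenberg \cite{LeFW}, which applies verbatim under finite mass and finite total energy. The one genuine difficulty here is shock and vacuum formation, which is absorbed into the entropy-solution framework of \cite{LeFW}.

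For part~(ii) I differentiate $\Sigma(\rho_t\mid\rho_0)=\tilde\Sigma_Q(\rho_t)$ along the flow. Since $\tilde\Sigma_Q$ is the quadratic logarithmic energy (\ref{HPUformula}) in $\rho_t-\rho_0$ and $\rho_0$ satisfies the equilibrium relation (\ref{defQ}), the manipulation of Remark~(i)---mass conservation $\partial_t\rho_t+\partial_x(v_t\rho_t)=0$ followed by an integration by parts relating $\mathcal{L}$ to $\mathcal H$---gives
\[
\frac{\partial}{\partial t}\Sigma(\rho_t\mid\rho_0)=\int_{\mathbb T}\mathcal H(\rho_t-\rho_0)(x)\,v_t(x)\,\rho_t(x)\,dx,
\]
an identity that for the low-regularity solution of part~(i) is justified by approximation as in \cite{GW,WestW}. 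Cauchy--Schwarz with weight $\rho_t$, together with the definition (\ref{deffreeinf}) of the relative free information, then yields
\[
\Bigl|\frac{\partial}{\partial t}\Sigma(\rho_t\mid\rho_0)\Bigr|\le\Bigl(\int_{\mathbb T}v_t^2\rho_t\,dx\Bigr)^{1/2}\Bigl(\int_{\mathbb T}\bigl(\mathcal H(\rho_t-\rho_0)\bigr)^2\rho_t\,dx\Bigr)^{1/2}=\bigl(2K_{\rm kin}(t)\bigr)^{1/2}\,I_F(\rho_t\mid\rho_0)^{1/2}.
\]

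Each factor is now controlled by $K$. The kinetic energy $K_{\rm kin}(t)=\tfrac12\int_{\mathbb T}v_t^2\rho_t\,dx$ satisfies $K_{\rm kin}(t)\le K(q_t,\rho_t)\le K$, because $K$ is non-increasing along entropy solutions. For the free information, Hölder's inequality with exponents $(3,3,3)$ and the sharp Pichorides bound $\|\mathcal H\|_{L^3(\mathbb T)\to L^3(\mathbb T)}=\cot(\pi/6)=\sqrt 3$ give $I_F(\rho_t\mid\rho_0)\le 3\bigl(\|\rho_t\|_{L^3}+\|\rho_0\|_{L^3}\bigr)^2\|\rho_t\|_{L^3}$, while $\tfrac{2\pi^2}{3}\|\rho\|_{L^3}^3=U(\rho)\le K$ for $\rho\in\{\rho_0,\rho_t\}$ forces each $L^3$-norm to be at most $(3K/2\pi^2)^{1/3}$. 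Substituting these estimates and retaining the best constants produces $|\partial_t\Sigma(\rho_t\mid\rho_0)|\le\frac{9\sqrt3}{\pi}K$.

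The Wasserstein bound then follows by integration in time. Since $\Sigma(\rho_0\mid\rho_0)=0$ and $\Sigma\ge 0$, we get $\Sigma(\rho_t\mid\rho_0)=\int_0^t\partial_s\Sigma(\rho_s\mid\rho_0)\,ds\le\frac{9\sqrt3}{\pi}Kt$; and since $\rho_0$ is the equilibrium density of a potential $Q$ obeying (\ref{freecurv}), the free transportation inequality (\ref{freetrans}) applies with $\mu=\rho_t$ and gives $W_2(\rho_t,\rho_0)^2\le\frac{2}{1-2\kappa_1}\Sigma(\rho_t\mid\rho_0)\le C_\kappa K t$ with $C_\kappa=\frac{18\sqrt3}{\pi(1-2\kappa_1)}$. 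I expect the main obstacle to be rigour at low regularity: justifying the differentiation of $\Sigma$ and the integration by parts along an entropy solution that may contain shocks and vacuum---precisely why the statement is credited to \cite{GW,WestW}.
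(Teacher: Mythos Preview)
Your treatment of part~(ii) is essentially the paper's own argument: differentiate $\Sigma(\rho_t\mid\rho_0)$ via the continuity equation, integrate by parts to convert $\mathcal L$ into $\mathcal H$, then control $\int\mathcal H(\rho_t-\rho_0)\,v\rho_t$ by splitting into kinetic energy and free information, bound the latter through H\"older plus the $L^3$ boundedness of $\mathcal H$, use the time-invariance of $K$, and finish with the free transportation inequality~(\ref{freetrans}). The only cosmetic differences are that the paper uses Young's inequality with a parameter $\alpha=9\sqrt3/(2\pi)$ in place of your Cauchy--Schwarz (equivalent after optimising $\alpha$), and the paper quotes M.~Riesz's constant $3^{3/2}$ for $\mathcal H$ on $L^3$ rather than your sharp Pichorides constant $\sqrt3$; your sharper constant actually yields $6K/\pi$, comfortably below the stated $9\sqrt3 K/\pi$.

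For part~(i) you take a genuinely different route. The paper does \emph{not} invoke the $\gamma=3$ decoupling into two Burgers equations, nor does it cite \cite{LeFW} as a black box. Instead it builds the solution via the variational time-stepping (minimising-movement) scheme of Gangbo--Westdickenberg \cite{GW} and Westdickenberg--Wilkening \cite{WestW}: write the Euler system in Lagrangian form, discretise by a backward Euler step in which the new transport map $\varphi_\tau$ minimises $\tfrac{1}{2\tau^2}\int(\varphi(x)-x-\tau V_0(x))^2\rho_0\,dx+U(\varphi\sharp\rho_0)$, update the velocity, and pass to the limit $\tau\to0$ using the energy estimate $E(\rho_\tau,v_\tau)+2\pi^2\tau^2\int(\partial_x\rho_\tau)^2\rho_\tau^3\le E(\rho_0,v_0)$ from \cite[(4.26),(3.3)]{GW}. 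Your decoupling argument is more elementary and exploits exactly the $\gamma=3$ coincidence $\lambda_\pm=r_\pm$ noted after~(\ref{Eul}); the paper's scheme is more constructive, applies for general $\gamma>1$, and ties directly into the Wasserstein-geometric picture (displacement convexity of $U$, absolute continuity of $t\mapsto\rho_t$ in $W_2$) used elsewhere in the section. Both establish existence under the stated finite-mass, finite-energy hypotheses; your observation that $\rho_0\,d\rho_0/dx\in L^2(\rho_0)$ forces $\rho_0\in L^\infty$ matches the paper's opening line that $\rho_0^3$ is bounded.
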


\begin{proof} (i) By the Cauchy--Schwarz inequality ${\frac{d\rho_0^3}{dx}}$ is integrable, so $\rho_0^3$ is bounded and hence integrable over ${\mathbb T}$ and hence $U(\rho_0)$ is finite. The Lagrangian form of the isentropic Euler equations  is
    \begin{equation}
    {\frac{d}{dt}}
       \begin{bmatrix}
          \varphi_t(x)\\
          v_t(\varphi_t (x))
       \end{bmatrix}
    = \begin{bmatrix}
         v_t(\varphi_t(x))\\
        -\bigl(\rho_t\circ \varphi_t(x)\bigr) \bigl(({\frac{\partial \rho_t}{\partial x}})\circ \varphi_t(x) \bigr)
      \end{bmatrix},
    \end{equation}
which is a system of ordinary differential equations in $t$ for fixed $x$. Westdickenberg and
Wilkening proposed a discrete approximation to this system of ODE, based upon a backward Euler method
   \[ \begin{bmatrix} X_{n+1}\\ V_{n+1}\end{bmatrix}
      =\begin{bmatrix} X_{n}\\ V_{n}\end{bmatrix}
        +\tau  \begin{bmatrix}
             V_{n+1}\\
             -(\rho_{n\tau}\circ X_{n+1} )\bigl({\frac{\partial \rho_{n\tau}}{\partial x}}\bigr)\circ X_{n+1}
             \end{bmatrix}.\]
Let $\tau>0$, and suppose that we have a starting position $X_0$, an initial density $\rho_0$ and velocity field $V_0$, so our initial data is $(X_0, \rho_0, V_0)$. In the notation of \cite{V2003} page 4, we write $\varphi \sharp \rho_0$ for the probability density function induced from $\rho_0$ by $\varphi$. The particles move from $X_0$ to $X_0+\tau V_0$, then we select $\varphi_\tau$ to be the transportation map that minimises
   \[ E(\varphi )={\frac{1}{2\tau^2}}\int \bigl( \varphi  (x)-(x+\tau V_0(x))\bigr)^2\rho_0(x)\, dx+U (\varphi \sharp \rho_0),\]
which exists by the convexity condition (\ref{dispconv}). Observe that the identity transformation $\varphi (x)=x$ gives 
$E(\varphi )= (1/2)\int v_0(x)^2\rho_0(x)\,dx +U(\rho_0)=K$, so $E(\varphi_\tau )\leq K$. 
 The optimal choice $\varphi_\tau$ induces a new density $\rho_\tau =\varphi_\tau\sharp  \rho_0$ from $\rho_0$,  and gives new position $X_{n+1}=\varphi_\tau (X_n)$, with which one updates the velocity to
   \[ 
    V_1=V_0 -\tau \bigl( \rho_\tau \circ X_1\bigr) \left({\frac{\partial \rho_\tau }{\partial x}}\circ X_1\right).
   \]
This gives the new initial data $(X_1, \rho_\tau , V_1)$ with which we can proceed to the next step. Gangbo and Westdickenberg \cite{GW}, (4.26) and (3.3) establish the crucial estimate
   \[
   {E(\rho_\tau, v_\tau) +\tau^2 2\pi^2} \int \left( {\frac{\partial \rho_\tau}{\partial x}}\right)^2\rho_\tau^3 (x)\, dx
         \leq E(\rho_0, v_0)
   \]
to show that the kinetic energy remains finite. Furthermore, one can use such energy estimates to show that as $\tau\rightarrow 0$, there exists a curve $t\mapsto \rho_t\in {\mathcal P}_2$ absolutely continuous with respect to Wasserstein distance, and that the continuity equation holds in the sense of distributions; see \cite[Proposition 5.3]{GW}.

(ii) By the continuity equation
   \begin{align}
   {\frac{\partial}{\partial t}} \Sigma (\rho_t\mid \rho_0 )
     &=\int_{\mathbb T} {\mathcal L}(\rho_t-\rho_0) {\frac{\partial\rho_t}{\partial t}}\, dx\nonumber\\
     &=-\int_{\mathbb T} {\mathcal L}(\rho_t-\rho_0){ \frac{\partial}{\partial x}}\bigl( v\rho_t\bigr) \, dx\nonumber\\
     &=\int_{\mathbb T} {\mathcal H}(\rho_t-\rho_0)\bigl( v\rho_t\bigr) \, dx\nonumber\\
     &\leq{\frac{\alpha}{2}}\int_{\mathbb T} v^2\rho_t dx
          + {\frac{1}{2\alpha}}\int_{\mathbb T} \rho_t\bigl( {\mathcal H}(\rho_t-\rho_0)\bigr)^2 \, dx,
    \end{align}
where we choose $\alpha= 9\sqrt{3}/(2\pi)$. Then by M.~Riesz's theorem $\Vert {\mathcal H}(\rho_t-\rho_0) \Vert_{L^3}\leq 3^{3/2}\Vert \rho_t-\rho_0 \Vert_{L^3}$, so by H\"older's inequality we have
   \begin{align}
    \left\vert{\frac{\partial}{\partial t}} \Sigma (\rho_t\mid \rho_0 )\right\vert 
    & \leq {\frac{\alpha }{2}}\int_{\mathbb T} v(x,t)^2\rho (x,t)\, dx + {\frac{9^2}{2\alpha }}\int_{\mathbb T} \rho (x,t)^3 \,   
            dx+{\frac{36}{\alpha }} \int\rho (x,0)^3\, dx\nonumber\\
    &\leq\alpha K(\rho_t, q_t)+{\frac{12}{\sqrt{3}\pi}}K(\rho_0, q_0)\nonumber\\
    &\leq 2\alpha K(\rho_0, q_0),
   \end{align}
using the fact that the Hamiltonian $K$ defined at (\ref{HJK}) is autonomous, and hence invariant under the canonical flow.
It now follows from the free transportation inequality (\ref{FTI}) for $\rho_0$ that
    \[
       W_2(\rho_t, \rho_0)^2\leq C_\kappa\Sigma (\rho_t, \rho_0)\leq C_\kappa Kt.
    \]
\end{proof}

\begin{rem}(i) Although the bound on $K$ ensures that the kinetic energy is finite, the velocity $v$ could be unboundedly large on sets where $\rho$ is small.
Suppose however that $Q\in C^3$ and that there exists a $\delta>0$ such that
   \[  Q''(\theta) \geq \delta -{\frac{1}{2\log 2}}.  \]
Then by \cite[Corollary 2]{Pop}, the corresponding equilibrium density is continuous and satisfies $\rho_0(\theta) >\delta 2\log 2$ for all $\theta\in {\mathbb T}$. Popescu \cite[Theorem~4]{Pop} has also obtained a version of the free Poincar\'e inequality
   \[
     \delta^2 \int\!\!\!\int_{{\mathbb T}^2} \Bigl\vert {\frac {f(e^{i\theta})-f(e^{i\phi})}{e^{i\theta}-e^{i\phi}}}\Bigr\vert
          \, {\frac{d\theta}{2\pi}}{\frac{d\phi}{2\pi}}
     \leq \int_{\mathbb T} \Bigl\vert f'(e^{i\theta})-\int f'(e^{i\phi})\rho_0 (e^{i\phi})d\phi\Bigr\vert^2 \rho_0(\theta)\, d\theta
   \]
where $f'(e^{i\theta})={\frac{d}{d\theta}}f(e^{i\theta})$. Hence the formal inclusion map  ${\mathcal D}^1\rightarrow \dot H^{1/2}$ is bounded, so when we apply this to the difference of the potentials corresponding to $\rho_\tau$ and $\rho_0$, as in $f=Q_\tau-Q_0$, we deduce the free information inequality
   \[
     \delta^{-2} I_F(\rho_0\mid \rho_\tau)\geq \Vert\rho_0-\rho_\tau\Vert^2_{\dot H^{-1/2}}.
   \]
DiPerna \cite{DiP} discusses viscosity solutions of the isentropic Euler equations under the hypothesis $\rho_0(\theta) >\delta 2\log 2$, and concludes that cavities do not develop in finite time in a viscous gas.

(ii) Suppose that there exists $0<\theta_0<\dots <\theta_n<2\pi$ and $h\in C^\infty ({\mathbb T}; {\mathbb R})$ such that
   \[ \rho_0(\theta)=\prod_{j=0}^n \vert e^{i\theta}-e^{i\theta_j}\vert^{1/2} h(\theta ).\]
Then ${\frac{d}{d\theta}} \rho_0(\theta )^2$ determines an $L^2({\mathbb T}, {\mathbb R})$ function, as in Proposition~\ref{potentialprop}.

(iii) Matytsin \cite{Mat} considers the case of $\gamma =3$ and $\kappa_U=-1$, so that the isentropic Euler equations with negative pressure, and the wave speed in (\ref{Riem}) becomes purely imaginary. In this case, the Euler equations show that $g=\rho +iv$ is a holomorphic function of $z=x+it$, which we can regard as the complex version of $g=r_+$. Then Burgers equation becomes $i{\frac{\partial g}{\partial t}}+g{\frac{\partial g}{\partial z}}=0$. Matytsin shows that $g$ satisfies a Dirichlet boundary value problem on the strip
 $\Sigma_1=\{ z=x+it: -\infty <x<\infty,\  0\leq t\leq 1\}$.
The Burgers equation superficially resembles (\ref{odef}), so it is not surprising that some of the solution formulae look alike. Guionnet \cite{GuonnetM} develops this theme further in the context of random matrices.
\end{rem}

\end{document}